\def\newaliasedtheorem#1[#2]#3{
  \newaliascnt{#1@alt}{#2}
  \newtheorem{#1}[#1@alt]{#3}
  \expandafter\newcommand\csname #1@altname\endcsname{#3}
}
\theoremstyle{plain}
\newtheorem{theorem}{Theorem}[section]
\newtheorem{maintheorem}{Theorem}
\theoremstyle{definition}
\theoremstyle{remark}
\numberwithin{equation}{section}
\def\eps{\sigma}
\def\R{\mathbb R}
\def\N{{\mathbb N}}% nonnegative integers
\def\Z{{\mathbb Z}}% integers
\def\T{{\mathbb T}}% torus
\DeclareMathOperator{\supp}{supp}
\DeclareMathOperator{\loc}{loc}
\DeclareMathOperator{\swap}{swap}
\DeclareMathOperator{\initial}{in}
\DeclareMathOperator{\dist}{d}
\newcommand{\XX}{{\mbox{\boldmath$X$}}}
\newcommand{\WW}{{\mbox{\boldmath$W$}}\mkern-4mu}
\title[Anomalous Dissipation and Lack of Selection % \\ %for Passive Scalars
in the Obukhov-Corrsin Theory]{Anomalous Dissipation and Lack of Selection \\ %for Passive Scalars
in the Obukhov-Corrsin Theory of Scalar Turbulence}
\author[M. Colombo, G. Crippa, M. Sorella]{Maria Colombo, Gianluca Crippa \and Massimo Sorella}
\address{Maria Colombo  
\hfill\break EPFL B, Station 8, CH-1015 Lausanne, Switzerland}
\email{maria.colombo@epfl.ch}
\address{Gianluca Crippa
\hfill\break Departement Mathematik und Informatik, Universit\"at Basel, Spiegelgasse 1, CH-4051 Basel, Switzerland}
\email{gianluca.crippa@unibas.ch}
\address{Massimo Sorella
\hfill\break EPFL B, Station 8, CH-1015 Lausanne, Switzerland}
\email{massimo.sorella@epfl.ch}
\begin{document}

\begin{abstract}
The Obukhov-Corrsin theory of scalar turbulence~\cite{Obukhov,Corrsin} advances quantitative predictions on passive-scalar advection in a turbulent regime and can be regarded as the analogue for passive scalars of Kolmogorov's K41 theory of fully developed turbulence~\cite{kolmo}. The scaling analysis of Obukhov and Corrsin from~1949-1951 identifies a critical regularity threshold for the advection-diffusion equation and predicts anomalous dissipation in the limit of vanishing diffusivity in the supercritical regime. In this paper we provide a fully rigorous mathematical validation of this prediction by constructing a velocity field and an initial datum {such that the unique bounded solution of the advection-diffusion equation} is bounded uniformly-in-diffusivity { within any fixed} supercritical Obukhov-Corrsin regularity regime { while also exhibiting} anomalous dissipation. Our approach relies on a fine quantitative analysis of the interaction between the spatial scale of the solution and the scale of the Brownian motion which represents diffusion in a stochastic Lagrangian setting. This provides a direct Lagrangian approach to anomalous dissipation which is fundamental in order to get detailed insight on the behavior of the solution. Exploiting further this approach, we also show that for a velocity field in $C^\alpha$ of space and time (for an arbitrary $0 \leq \alpha < 1$) neither vanishing diffusivity nor regularization by convolution provide a selection criterion for bounded solutions of the advection equation. This is motivated by the fundamental open problem of the selection of solutions of the Euler equations as vanishing-viscosity limit of solutions of the Navier-Stokes equations and provides a complete negative answer in the case of passive advection.
\end{abstract}

\maketitle

\section{Introduction}

The advection of a passive scalar $\vartheta = \vartheta(t,x) \in \R$ by a divergence-free velocity field $u = u(t,x) \in \R^n$ on the $n$-dimensional torus $\T^n \cong \R^n / \Z^n $ is described by the equation
\begin{equation}\label{e:intro:advection}
\partial_t \vartheta + u \cdot \nabla \vartheta = 0 \,.
\end{equation}
The basic physical example is the advection of the temperature, which is assumed to have no influence on the given background flow and therefore to be passively rearranged by it. This is in contrast with the case of active scalars, as for instance the vorticity of a two-dimensional fluid which is directly coupled to the velocity by the curl relation. For regular solutions the incompressibility of the flow guarantees the conservation in time of all rearrangement-invariant norms of solutions of~\eqref{e:intro:advection}, in particular of the spatial $L^2$ norm: for all times $t > 0$ we have $\| \vartheta(t,\cdot) \|_{L^2(\T^n)} = \| \vartheta(0,\cdot) \|_{L^2(\T^n)}$.

\medskip

The Obukhov-Corrsin theory of scalar turbulence~\cite{Obukhov,Corrsin} advances quantitative predictions on the energy spectrum and the structure functions for the passive scalar in a regime of turbulent advection and can be regarded as the analogue for passive scalars of Kolmogorov's K41~\cite{kolmo} theory of fully developed turbulence for the velocity field. In the idealized regime of small viscosity, within K41 the velocity field is predicted to satisfy Kolmogorov's two-thirds law, which reflects into Onsager's~\cite{onsager} critical regularity of order $\sfrac{1}{3}$ for the velocity field (see~\cite{frisch,eyinksurvey} for the general context). Based on dimensional analysis, in 1949-1951 Obukhov and Corrsin independently predicted the same scaling in the idealized regime of small diffusivity~$\kappa>0$ for solutions of the advection-diffusion equation with a turbulent advecting flow
\begin{equation}\label{e:intro:advdiff}
\partial_t \vartheta_\kappa + u \cdot \nabla \vartheta_\kappa = \kappa \Delta \vartheta_\kappa \,.
\end{equation}
More precisely, the $q$-th order structure function of the passive scalar $S_q^\vartheta(\ell) := \langle | \delta_\ell \vartheta|^q \rangle$ is predicted to exhibit as a function of the spatial increment $\ell$ a scaling $\sim \ell^{\sfrac{q}{3}}$ in the so-called inertial-convective range, for large Reynolds number turbulence and for a diffusivity at least of the order of magnitude of the viscosity. Ideally, this can be interpreted as a regularity of order $\sfrac{1}{3}$ for the passive scalar uniformly for small but strictly positive diffusivity (see also~\cite{sree1,sree2}).
{ The observations in \cite{E96} and the numerical simulations in \cite{IS18} suggest the occurrence of intermittency in passive-scalar turbulence, analogously to the case of the velocity field in the Navier-Stokes equations: intermittency entails corrections to the structure functions (which therefore may depend non-linearly on the integrability exponent $q$) and has been validated to a large extend both experimentally and numerically \cite{IS20}.}
%, in which corrections of the structure function, that may depend non-linearly on q due to intermittency, are validated to a large extend both experimentally and numerically \cite{IS20}. }

\medskip

%{\color{red} What is expected in certain turbulent regimes is the persistence of dissipation in the limit of  vanishing diffusivity \cite{sree1,sree2,S00,D05}.} 
{ A major tenet of turbulence theory is the persistence of dissipation in the limit of vanishing viscosity (for the velocity) \cite{kolmo} or vanishing diffusivity (for the passive scalar) \cite{sree1,sree2,S00,D05}.}
 Testing~\eqref{e:intro:advdiff} against the solution $\vartheta_\kappa$ leads to a dissipation term and a nonlinear term, namely
\begin{equation}\label{e:intro:testing}
%\eps_\kappa := 
\kappa \int_0^T \| \nabla \vartheta_\kappa(s,\cdot) \|^2_{L^2(\T^n)} \, ds
\quad\text{ and }\quad
\int_0^T \! \! \int_{\T^n} \left(u \cdot \nabla \vartheta_\kappa \right) \vartheta_\kappa \, dx\, ds \sim \int_0^T \! \!  \int_{\T^n} \nabla^{\alpha} u \left( \nabla^{\sfrac{(1-\alpha)}{2}} \vartheta_\kappa \right)^2 \, dx\, ds
\end{equation}
(we will discuss in the next paragraph the role of the parameter $0\leq\alpha\leq 1$ in the formal rewriting of the nonlinear term) and to the energy balance
\begin{equation}\label{e:intro:balance}
\int_{\T^n} |\vartheta_\kappa (t,\cdot)|^2 \, dx + 2 \kappa \int_0^t \| \nabla \vartheta_\kappa(s,\cdot) \|^2_{L^2(\T^n)} \, ds 
= \int_{\T^n} |\vartheta_\kappa(0,\cdot)|^2 \, dx 
\qquad \text{for $t\in[0,T]$.}
\end{equation}
Anomalous dissipation for the passive scalar amounts to 
\begin{equation}\label{e:intro:anomalous}
\limsup_{\kappa\to 0} \; \kappa \int_0^T \| \nabla \vartheta_\kappa(s,\cdot) \|^2_{L^2(\T^n)} \, ds > 0\,,
\end{equation}
which (by lower semicontinuity of the $L^2$ norm under weak convergence, and up to subsequences) implies convergence of $\vartheta_\kappa$ to a solution $\vartheta$ of~\eqref{e:intro:advection} for which $\| \vartheta(T,\cdot) \|_{L^2(\T^n)} < \| \vartheta(0,\cdot) \|_{L^2(\T^n)}$. The presence of anomalous dissipation in particular requires a blow up as $\kappa\to 0$ of the first-order derivative of the passive scalar. The argument by Obukhov and Corrsin leading to the scaling $\sim \ell^{\sfrac{q}{3}}$ for the $q$-th order structure function predicts a sharp, uniform-in-diffusivity bound on the regularity of order $\sfrac{1}{3}$ for the passive scalar.

\medskip

The formal rewriting of the nonlinear term as in~\eqref{e:intro:testing} reveals the generalized regime of critical regularity for the problem. Assuming that the divergence-free velocity field $u$ belongs to $L^p([0,T];C^\alpha(\T^n))$ for $1 \leq p \leq \infty$ and $0\leq\alpha\leq 1$, we define implicitly $p^\circ \! \in [2,+\infty]$ and $\alpha^\circ \!\in [0,\sfrac{1}{2}]$ by the so-called Yaglom's~\cite{yaglom} relation
\begin{equation}\label{e:intro:yaglom}
\frac{1}{p} + \frac{2}{p^\circ} = 1 
\qquad \text{ and } \qquad
\alpha + 2 \alpha^\circ = 1 \,.
\end{equation}

{
Obukhov-Corrsin criticality corresponds to the passive scalar $\vartheta_\kappa$ belonging to $L^{p^\circ}([0,T];C^{\alpha^\circ}(\T^n))$ uniformly with respect to the diffusivity~$\kappa>0$. The scaling analysis by Obukhov and Corrsin  reveals very strong similarities to \cite{kolmo}, for both the scaling of structure functions and the energy spectrum. Therefore, Obukhov and Corrsin conjectured similar behaviors for the passive scalar as for the velocity for the Navier-Stokes. With the function-space language we just introduced, this means in analytical terms that for $u\in L^p([0,T];C^\alpha(\T^n))$ and $\vartheta \in L^{p^\circ}([0,T];C^{\beta}(\T^n))$, Obukhov and Corrsin predict:
\begin{itemize}
\item[(1)] In the subcritical Obukhov-Corrsin regime $\beta > \alpha^\circ$ the advection equation~\eqref{e:intro:advection} has a unique solution, the conservation $\| \vartheta(t,\cdot) \|_{L^2(\T^n)} = \| \vartheta(0,\cdot) \|_{L^2(\T^n)}$ holds for {every} %a.e.
  $t \in[0,T]$, and there is no anomalous dissipation. 
 \item[(2)] In the supercritical Obukhov-Corrsin regime $\beta < \alpha^\circ$ 
 \begin{itemize}
\item[(i)] the equation ~\eqref{e:intro:advection} with a fixed velocity field in $L^p([0,T];C^\alpha(\T^n))$ has infinitely many solutions in $L^{p^\circ} ([0,T];C^\beta(\T^n))$ with the same initial datum, and these solutions dissipate in time the spatial $L^2$ norm;
\item[(ii)] there is anomalous dissipation for solutions $\vartheta_\kappa$ of \eqref{e:intro:advdiff}  which are equibounded in $L^{p^\circ}([0,T];C^\beta(\T^n))$ uniformly in $\kappa >0$.
\end{itemize}
\end{itemize} 
}

\medskip

The above statements have a clear connection with the corresponding ones for the Euler equations. The Onsager criticality threshold $L^3([0,T];C^{\sfrac{1}{3}}(\T^n))$ for the velocity field corresponds to the Obukhov-Corrsin criticality threshold by formally considering $u=\vartheta$. Onsager's conjecture~\cite{onsager} %(by now Onsager's theorem)
 asserts conservation of the kinetic energy for solutions with spatial $C^{\sfrac{1}{3}+}$ regularity, and existence of nonunique and energy-dissipative
 %{\color{red} qui si puo lasciare dissipative? Il resto del file \`e stato corretto con ``which dissipates...'' o ``which conseves..'' al posto di conservative.} 
 solutions with spatial $C^{\sfrac{1}{3}-}$ regularity. The positive part of Onsager's conjecture corresponds to the subcritical case in (1) above and has been answered affirmatively (in a number of slightly different functional settings) in~\cite{eyinkonsag,ConstantinETiti,fried}. The negative part of Onsager's conjecture corresponds to item (i) in (2) above and has recently been established as the culmination of an amazing mathematical tour de force~\cite{DLL09,DLL13,BDLIS,IsettOnsager,OnsagerAdmissible} by relying on techniques of convex integration. 

Item (ii) in (2) goes beyond Onsager's conjecture and corresponds to a deterministic version of the so-called Kolmogorov's 0-th law of turbulence~\cite{kolmo}, which postulates the universality in a statistical sense of anomalous dissipation (the analogue of~\eqref{e:intro:anomalous} for the velocity field) for the Euler equations in the vanishing-viscosity limit of the Navier-Stokes equations in the regime of fully developed turbulence. The mathematical understanding of these predictions remains a great challenge. On one side, even a rigorous formulation of the statistical problem is missing. On the other side, { we lack even a single deterministic example of anomalous dissipation}: generating solutions of the Navier-Stokes equations with bounds uniformly in viscosity and displaying anomalous dissipation appears to be a major  challenge. In this direction, in~\cite{BuckmasterVicolAnnals} convex integration has been employed to construct weak solutions of the Navier-Stokes equations and it has been shown that energy-dissipative 
%{\color{red} si puo lasciare dissipative?} 
$C^\beta$ solutions of the Euler equations can be realized as limit of irregular Navier-Stokes solutions in $L^\infty ([0,T];  L^2(\T^3))$. It remains open whether the same can be done with Leray-Hopf or smooth solutions%%%DECOMMENT BEFORE FINAL SUBMISSION (see \cite{mariahypodissipativeonefifth,DeRosa19} for a negative answer in the case of the hypodissipative case with fractional diffusion of order less than 1/3) 
. We will further comment on this after the statement of Theorem~\ref{t_mainadvection}. 
At the same time as the present manuscript, an example of anomalous dissipation for the %$2\frac{1}{2}$
three-dimensional Euler equations with (viscosity-dependent) forcing has been constructed in~\cite{camilloelia}. Such an example has a ``two-and-a-half-dimensional'' structure: the first two components of the vector field solve the forced two-dimensional Euler equations and  exhibit the same quasi-self-similar behaviour in~\cite[Sections~6--8]{ACM-JAMS}, while 
the third component displays anomalous dissipation and is a bounded scalar passively advected as in \eqref{e:intro:advection}. 
% in which anomalous dissipation is displayed by the third component, which is a bounded scalar passively advected as in \eqref{e:intro:advection} by a two-dimensional velocity field which solves a forced equation and exhibits the same quasi-self-similar behaviour in~\cite[Sections~6--8]{ACM-JAMS}. 
Notice that in~\cite[Theorem 3.1]{camilloelia} no uniform-in-viscosity estimates better than boundedness are available for the passive scalar, in particular no information is available on a uniform H\"older modulus of continuity as predicted in the Onsager and Obukhov-Corrsin theories.

\medskip 

The positive result in the subcritical Obukhov-Corrsin regime (recall item~(1) above) can be proven along the lines of~\cite{ConstantinETiti} (regularizing the equation and showing the convergence of the commutator which results from a decomposition analogue to the formal rewriting of the nonlinear term in~\eqref{e:intro:testing}) and has been done in~\cite{emil_notes,Gautam}. In~\cite{procaccia2} the criticality relations~\eqref{e:intro:yaglom} have been interpreted as a consequence of the fractal geometry of the level sets of the passive scalar. 
{ Recently, in \cite{Gautam} the authors address the endpoint case $\alpha <1$ and $\beta =0$ (interpreted as $L^\infty$ bounds for the passive scalar) and provide criteria for anomalous dissipation. They provide an explicit example (based on a previous construction in \cite{P94}) of a velocity field which exhibits anomalous dissipation for every initial datum sufficiently close to an eigenfunction of the Laplacian. More in general they also construct, for every initial datum with suitable regularity, a velocity field for which anomalous dissipation occurs; however, the velocity field depends on the chosen initial datum. To the best of our knowledge, no further analytical results prior to our paper are available in the full supercritical Obukhov-Corrsin regime, in particular, no examples are known in which the passive scalar enjoys some fractional regularity uniformly in diffusivity.}

 We stress  that there is a major gap in difficulty between showing boundedness for the passive scalar, and some fractional regularity uniformly in diffusivity (as in \eqref{bound_main_OC}  below). Indeed, the advection equation~\eqref{e:intro:advection} and the advection-diffusion equation~\eqref{e:intro:advdiff} are easily seen to propagate boundedness of the initial datum %under very weak assumptions on the velocity field
 since the velocity field is divergence-free. On the other hand, the advection equation~\eqref{e:intro:advection} is known not to propagate any fractional regularity of the initial datum~\cite{ACM-loss}, not even for velocity fields with Sobolev regularity of order one (and therefore in the DiPerna-Lions class~\cite{DPL89}). The fractional regularity of the passive scalar is a major difficulty, especially when requiring uniform-in-diffusivity bounds. This has been left fully open in~\cite{Gautam} (see in particular~\cite[Question~5.1]{Gautam}). 

\medskip

In this paper we provide several rigorous results within the supercritical Obukhov-Corrsin regime. Our first result constructs, for any chosen regularity in the supercritical regime, a velocity field and an initial datum exhibiting anomalous dissipation, under the required regularity bounds on the passive scalar uniformly in diffusivity:

\begin{maintheorem}[Anomalous dissipation in the Obukhov-Corrsin theory]\label{t_main_OC} 
Let $p \in [2,\infty]$, $p^\circ \in [2,4]$, $\alpha\in [0,1]$, and $\beta \in [0,\sfrac{1}{2}]$ be such that 
$$
\frac{1}{p} + \frac{2}{p^\circ} = 1
\qquad\text{ and }\qquad
 \alpha + 2\beta <1 \,.
$$
Then there exists a divergence-free velocity field $u \in L^{p} ([0,1]; C^\alpha(\T^2))$ and an initial datum $\vartheta_{\initial} \in C^\infty(\T^2)$ with $ \int_{\T^2} \vartheta_{\initial}=0$ such that the solutions $\vartheta_\kappa$ of the advection-diffusion equation \eqref{e:intro:advdiff} with initial datum $\vartheta_{\initial}$ satisfy the uniform-in-diffusivity bound 
\begin{equation}\label{bound_main_OC} 
\sup_{\kappa \in [0,1]} \| \vartheta_\kappa\|_{L^{p^\circ} ([0,1]; C^\beta(\T^2))} < \infty
\end{equation}
and exhibit anomalous dissipation
\begin{equation}\label{diss_main_OC} 
\limsup_{\kappa \to 0} \, \kappa \int_0^1 \int_{\T^2} | \nabla \vartheta_\kappa|^2 \, dx\,dt >0 \,.
\end{equation}
\end{maintheorem}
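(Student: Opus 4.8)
The plan is to construct $u$ as a self-similar cascade of mixing flows that drives the passive scalar to ever finer spatial scales, accumulating all of its activity at the final time $t=1$. Fix a smooth, mean-zero, divergence-free ``cell flow'' $v$ on $\T^2$ (for instance a suitable composition of alternating shear flows) whose time-$1$ map transfers a scalar organized at scale $1$ to scale $1/a$, for a fixed factor $a>1$. Choose frequencies $\lambda_n=a^n$, time intervals $I_n=[t_n,t_{n+1}]$ of length $\tau_n\sim b^{-n}$ (with $b>1$) whose union exhausts $[0,1)$ and which accumulate at $t=1$, and amplitudes $U_n\sim (b/a)^n$ fixed by the turnover normalization $U_n\lambda_n\tau_n\sim 1$. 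On $I_n$ I would set $u(t,\cdot)=U_n\,v(\lambda_n\,\cdot)$, rotating the orientation of $v$ from one generation to the next so that the mixing is isotropic, starting from a smooth mean-zero datum $\vartheta_{\initial}$ at frequency $\lambda_0\sim 1$; this $u$ is divergence-free by construction.

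The regularity of $u$ reduces to a geometric series: since $\|u(t,\cdot)\|_{C^\alpha}\sim U_n\lambda_n^\alpha$ on $I_n$, one has $\|u\|_{L^p([0,1];C^\alpha)}^p\sim\sum_n\tau_n(U_n\lambda_n^\alpha)^p$, which converges precisely when $\sigma:=\tfrac{\log b}{\log a}<\tfrac{(1-\alpha)p^\circ}{2}$, after using $\tfrac{1}{p}+\tfrac{2}{p^\circ}=1$. I would then perform the self-similar change of variables $y=\lambda_n x$, $s=(t-t_n)/\tau_n$ on each $I_n$: the advection-diffusion equation becomes the fixed cell problem $\partial_s\vartheta+v\cdot\nabla_y\vartheta=\kappa_n\Delta_y\vartheta$ with effective diffusivity $\kappa_n=\kappa\,\lambda_n^2\tau_n\sim\kappa\,(a^2/b)^n$. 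Since $\sigma<\tfrac{p^\circ}{2}\le 2$ forces $b<a^2$, the sequence $\kappa_n$ grows geometrically and crosses the order-one threshold at a critical generation $N(\kappa)\sim\log(1/\kappa)$: for $n<N$ advection dominates and the scalar is mixed down one scale with essentially conserved amplitude, while for $n\ge N$ diffusion dominates and the scalar is dissipated.

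For the uniform bound \eqref{bound_main_OC}, the maximum principle and the frequency localization give, at generation $n$, amplitude $\lesssim 1$ and active frequency $\lesssim\lambda_n$, hence $\|\vartheta_\kappa(t,\cdot)\|_{C^\beta}\lesssim\lambda_n^\beta$ uniformly in $\kappa$, so that $\|\vartheta_\kappa\|_{L^{p^\circ}([0,1];C^\beta)}^{p^\circ}\lesssim\sum_n\tau_n\lambda_n^{\beta p^\circ}$ converges precisely when $\sigma>\beta p^\circ$. The two constraints $\beta p^\circ<\sigma<\tfrac{(1-\alpha)p^\circ}{2}$ admit a common value of $\sigma$ exactly under the supercriticality hypothesis $\alpha+2\beta<1$, which is how this assumption enters. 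For the anomalous dissipation \eqref{diss_main_OC}, I would estimate the dissipation produced in generation $n$ as $\kappa\int_{I_n}\|\nabla\vartheta_\kappa\|_{L^2}^2\,dt\sim\kappa_n A_n^2$, where $A_n$ is the amplitude entering generation $n$; since $A_n\sim 1$ for $n<N$ and $\kappa_n$ is geometric up to $\kappa_N\sim 1$, the total dissipation is dominated by the critical generation and stays bounded below by a fixed fraction of $\|\vartheta_{\initial}\|_{L^2}^2$, uniformly as $\kappa\to0$. Equivalently, $\vartheta_\kappa(1,\cdot)\to0$ in $L^2$ and the energy balance \eqref{e:intro:balance} yields the claim.

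The main obstacle is the uniform-in-diffusivity \emph{fractional} bound together with the rigorous propagation of the scalar through infinitely many generations. The frequency-localization heuristic must be upgraded to a genuine $C^\beta$ estimate holding pointwise in time and uniformly in $\kappa$, which requires controlling how the cell flow distorts H\"older norms and how diffusion regularizes at each scale, with errors that do not accumulate across the cascade; this is precisely the difficulty that was left open for the mere $L^\infty$ bound in earlier work. A second delicate point is the lower bound for the dissipation: one must prove a \emph{quantitative} lower bound showing that the critical cell problem (at $\kappa_n\sim1$) dissipates a definite fraction of the energy, and simultaneously that negligible energy is lost prematurely, so that $A_n\sim1$ persists up to $n\sim N$. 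Constructing a cell flow $v$ for which both the scale transfer and the quantitative dissipation can be rigorously established --- plausibly an explicit alternating-shear flow whose action on the relevant scalars is computable --- is the technical heart of the argument.
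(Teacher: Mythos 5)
Your scaling analysis (the two geometric series for $u$ and $\vartheta_\kappa$, the compatibility window $\beta p^\circ<\sigma<(1-\alpha)p^\circ/2$, and the critical generation $N(\kappa)$ where the effective diffusivity reaches order one) reproduces the paper's own heuristics correctly. But the two points you defer to the end are not technical refinements: they are the entire content of the theorem, and the self-similar framework you set up makes them either unprovable by the tools you invoke or outright false. First, the uniform-in-$\kappa$ bound \eqref{bound_main_OC}: ``maximum principle plus frequency localization'' cannot yield a $C^\beta$ estimate, because the advection equation is known \emph{not} to propagate any fractional regularity of the data~\cite{ACM-loc, ACM-loss}, so no functional or frequency-counting argument applies; the paper's proof works only because the velocity field is, at each time, a single mollified \emph{shear} flow, for which the backward stochastic flow satisfies the pointwise-in-noise Lipschitz bound \eqref{remark_regularity_SDE} --- the Brownian motion enters a shear flow only as a translation of the argument, so it does not amplify gradients at all --- and one then composes these bounds across scales and interpolates. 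This estimate is specific to alternating shears; for a ``rotated cell flow'' that is genuinely two-dimensional (as in~\cite{ACM-JAMS,YZ}) the paper explicitly remarks that such uniform estimates seem out of reach. Your passing suggestion to build $v$ from alternating shears points in the right direction, but without the observation that shears make the stochastic Jacobian independent of the noise, the claimed $C^\beta$ bound is unsupported.

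Second, the propagation of the scalar to the critical scale and the dissipation lower bound fail quantitatively under your exponential scaling. With $\lambda_n=a^n$, $\tau_n\sim b^{-n}$ and turnover normalization $U_n\lambda_n\tau_n\sim 1$, the accumulated Lipschitz factor satisfies $\exp\bigl(\int_0^{t_n}\|\nabla u\|_{L^\infty}\,ds\bigr)\sim e^{cn}$, i.e. a fixed positive power of $\lambda_n$; the Gr\"onwall comparison \eqref{l:gronwall} between the stochastic and deterministic flows then requires $\sqrt{\kappa}\,e^{cn}\ll\lambda_n^{-1}$, which is violated at generations $n$ comparable to $N(\kappa)$, precisely where you need the scalar to still be organized at scale $\lambda_n^{-1}$ with order-one amplitude. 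The paper avoids this in two ways that have no counterpart in your scheme: it takes \emph{superexponentially} separated scales $\lambda_{q+1}=\lambda_q^{1+\delta}$, and it builds the velocity so as to be exactly locally constant at scale $\lambda_{q+1}^{-1}$ on large ``good'' chessboard sets, so that on the critical intervals the drift difference between the two flows vanishes identically (property \eqref{c:vectorfield_constant_locally}) and no Gr\"onwall factor appears. Likewise, the lower bound \eqref{diss_main_OC} is not obtained from a ``critical cell problem'' with active advection (for which no quantitative dissipation lower bound is proved or cited): the paper inserts waiting intervals of length $\overline{t}_q$ on which $u\equiv 0$ and the solution is provably $L^1$-close to the $\lambda_q^{-1}$-periodic chessboard, so the explicit heat-semigroup decay $e^{-\kappa\lambda_q^2 t}$ dissipates an order-one fraction of the energy, and anomalous dissipation is established only along the tuned subsequence $\kappa=\tilde\kappa_q$, $q\in m\N$ --- which is all the $\limsup$ in \eqref{diss_main_OC} requires. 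Without these mechanisms (scale separation, exact local constancy, diffusion-only windows, shear-flow stochastic regularity), both of your ``main obstacles'' remain open, and the first one is obstructed by known counterexamples to regularity propagation.
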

%{\color{red} dire che le $\vartheta_\kappa$ sono uniche e che sono limitate (dal dato iniziale)}
In Theorem~\ref{t_main_OC}, the solutions $\vartheta_\kappa$ are unique and bounded in $L^\infty((0,1)\times \T^2)$ by $\|\vartheta_{\initial} \|_{L^\infty}$. 
{
As it was the case for \cite{Gautam}, in our example all of the anomalous dissipation is concentrated at time $t=1$, in the sense that for any $\eps>0$
$$
\limsup_{\kappa \to 0} \, \kappa \int_0^{1- \eps} \int_{\T^2} | \nabla \vartheta_\kappa|^2 \, dx\,dt =0 \,.
$$
 This can be interpreted as mimicking the development in time of a turbulent cascade.
 %, however, it is arguable in the context of homogeneous isotropic turbulence, which postulates (statistical) stationarity and therefore the fact that there should be no ``preferred'' time in turbulent phenomena. Moreover, our construction shows anomalous dissipation for one specific initial data specifically constructed together with the velocity field. 
Recently, the paper \cite{AV23}, which appeared on arXiv about ten months after the present manuscript, presents a new construction of  vector fields  in $C^\alpha$ for $\alpha < \sfrac{1}{3}$ exhibiting anomalous dissipation for {any} non-constant initial data in $H^1$.  The anomalous dissipation in \cite{AV23} happens continuously in time, in the sense that an analogue of \eqref{diss_main_OC} holds with the time interval $[0,1]$ replaced by any subinterval%by $[t_1,t_2]$ for any $0<t_1<t_2<1$
, in agreement with the predictions of homogeneous isotropic turbulence, which postulates (statistical) stationarity and therefore the absence of a ``preferred'' time in turbulent phenomena. The passive scalars are also announced to be H\"older regular uniformly in dissipation. The arguments in \cite{AV23} are elaborate and build on homogenization theory. 

We will give a quick, informal overview of our proof at the end of the introduction. We remark that we do not rely on the criteria for anomalous dissipation in~\cite{Gautam}.}
 In fact, such criteria are based on inverse interpolation inequalities and therefore only allow to obtain in the vanishing-diffusivity limit 
%
%
%dissipative {\color{red} si puo lasciare dissipative?} 
solutions of the advection equation~\eqref{e:intro:advection} which dissipate the $L^2$ norm. In our second theorem we show the existence of a velocity field for which, in the vanishing-diffusivity limit, two subsequences of solutions of~\eqref{e:intro:advdiff} exist, one converging to a solution of~\eqref{e:intro:advection} which conserves the~$L^2$ norm and one converging to a solution which dissipates the $L^2$ norm. 

\begin{maintheorem}[Lack of selection by vanishing diffusivity]\label{t_mainadvection}
For every  $\alpha \in [0,1)$ there exists  a divergence-free velocity field $u \in C^\alpha ([0, 2] \times \T^2)  $ and an initial datum $\vartheta_{\initial} \in C^\infty(\T^2)$ with $ \int_{\T^2} \vartheta_{\initial}=0$ such that the following holds.
The sequence $\vartheta_\kappa$ of solutions of the advection-diffusion equation \eqref{e:intro:advdiff} with velocity field $u$ and initial datum $\vartheta_{\initial}$ has at least two distinct limit points as $\kappa \to 0$ in the weak$^*$ topology, which are two distinct solutions of the advection equation~\eqref{e:intro:advection}. 
Moreover, one limit solution conserves the $L^2$ norm, namely~$\| \vartheta( t, \cdot ) \|_{L^2} = \| \vartheta^{\initial}\|_{L^2} $ for a.e. $t \in[0,2]$, and the other one exhibits strict dissipation of 
%$\| \vartheta( \cdot, t)\|_{L^2}$, 
the $L^2$ norm, namely~$\| \vartheta(  t,\cdot ) \|_{L^2} \leq \| \vartheta^{\initial}\|_{L^2} / 2$ for any $t \geq 1$.
\end{maintheorem}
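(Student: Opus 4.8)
The plan is to construct $u$ in two stages glued at $t=1$. On $[0,1]$ I would build a self-similar space-time cascade, in the spirit of the velocity field underlying Theorem~\ref{t_main_OC}, but tuned so that the energy dissipated by the passive scalar up to time $t=1$ does \emph{not} converge as $\kappa\to 0$, but instead \emph{oscillates}, taking values arbitrarily close to $0$ along one sequence $\kappa_j\to 0$ and values at least $\tfrac12\|\vartheta_{\initial}\|_{L^2}^2$ along a second sequence $\tilde\kappa_j\to 0$. On $[1,2]$ I would simply set $u\equiv 0$: there the equation~\eqref{e:intro:advdiff} reduces to the heat equation, whose dissipation over the fixed interval $[1,2]$ is $O(\kappa)\to 0$, so that the configuration reached at $t=1$ is frozen in the vanishing-diffusivity limit and the dichotomy at $t=1$ propagates to every $t\in[1,2]$. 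The two subsequential limits will then be two transport solutions of~\eqref{e:intro:advection} with the same datum but different $L^2$ norms; their distinctness is automatic, and is a manifestation of the non-uniqueness of~\eqref{e:intro:advection} available in this rough ($\alpha<1$) regime.

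The oscillation is the core of the construction. I would use a sequence of times $t_n\nearrow 1$, spatial scales $\ell_n\searrow 0$ and velocity amplitudes $U_n$, with the scalings of $(\ell_n,t_{n+1}-t_n,U_n)$ chosen both to realise an accelerating cascade and to keep $\|u\|_{C^\alpha([0,2]\times\T^2)}$ bounded for the prescribed $\alpha$. On each stage the flow first stretches the scalar down to the fine scale $\ell_{n+1}$ and then, by a reverse stirring, brings it back to the coarse scale $\ell_n$, so that under \emph{pure} advection the scalar returns to an (essentially coarse) profile at the end of each stage and the $L^2$ norm is exactly preserved. Diffusion breaks this reversibility only when the diffusive length $\sqrt{\kappa\,(t_{n+1}-t_n)}$ reaches the active scale $\ell_{n+1}$: if $\kappa$ is \emph{matched} to a stage, a definite fraction of the energy is irreversibly erased during the fine-scale window (dissipation), whereas if $\kappa$ is caught \emph{between} stages the reverse stirring recovers the coarse profile up to a diffusive error that vanishes (conservation). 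Because the cascade is self-similar, this matching is (asymptotically) periodic in $\log(1/\kappa)$, and I would tune the amplitudes so that the erased fraction oscillates between $\approx 0$ and $\geq\tfrac12$; the two sequences $\kappa_j$ and $\tilde\kappa_j$ are then read off from the ``valley'' and ``peak'' phases.

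With the field in hand, the limiting analysis is comparatively standard. The divergence-free condition gives $\|\vartheta_\kappa\|_{L^\infty}\le\|\vartheta_{\initial}\|_{L^\infty}$, so $\{\vartheta_\kappa\}$ is weak$^*$ precompact in $L^\infty([0,2]\times\T^2)$; I extract $\kappa_j\to 0$ and $\tilde\kappa_j\to 0$ realising the two phases and pass to weak$^*$ limits $\vartheta,\tilde\vartheta$. Since $u$ is fixed and continuous, $u\,\vartheta_\kappa\weak u\,\vartheta$ in distributions and $\kappa\Delta\vartheta_\kappa\to 0$, so $\vartheta$ and $\tilde\vartheta$ solve~\eqref{e:intro:advection}. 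For the dissipative limit, the energy balance~\eqref{e:intro:balance} together with the peak lower bound yields $\|\vartheta_{\tilde\kappa_j}(1,\cdot)\|_{L^2}^2\le\tfrac12\|\vartheta_{\initial}\|_{L^2}^2+o(1)$; combining $u\equiv 0$ on $[1,2]$ with lower semicontinuity gives $\|\tilde\vartheta(t,\cdot)\|_{L^2}\le\|\vartheta_{\initial}\|_{L^2}/2$ for all $t\ge 1$. For the conservative limit, the valley bound gives $\kappa_j\int_0^1\|\nabla\vartheta_{\kappa_j}\|_{L^2}^2\to 0$, hence $\|\vartheta_{\kappa_j}(t,\cdot)\|_{L^2}^2\to\|\vartheta_{\initial}\|_{L^2}^2$ for each $t$; upgrading weak$^*$ to strong $L^2$ convergence along this subsequence (the scalar remaining at controlled scales in the valley phase) yields $\|\vartheta(t,\cdot)\|_{L^2}=\|\vartheta_{\initial}\|_{L^2}$ for a.e.\ $t\in[0,2]$.

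The main obstacle is engineering, and rigorously controlling, the sharp dichotomy: a single $C^\alpha$ field in space and time whose dissipated energy genuinely oscillates between $\approx 0$ and $\ge\tfrac12$ as $\kappa\to 0$. This demands two-sided quantitative control of the advection-diffusion dynamics at every scale --- a lower bound on dissipation at matched $\kappa$ and, more delicately, an upper bound at unmatched $\kappa$, i.e.\ showing that the reverse stirring recovers the coarse profile up to a diffusive error that vanishes and that the convergence along the valley subsequence is strong enough to transfer exact conservation to the limit. Propagating the joint space-time H\"older bound through the self-similar rescaling, so as to keep $\|u\|_{C^\alpha}$ finite for the prescribed $\alpha\in[0,1)$, is the other principal technical burden.
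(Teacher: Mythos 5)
Your plan for the dissipative subsequence is sound and is essentially the paper's mechanism (a cascade with quiet time windows in which the diffusive length $\sqrt{\kappa\,\tau}$ overtakes the current scale of the scalar, producing enhanced dissipation at "matched" $\kappa$ and negligible dissipation at "unmatched" $\kappa$). The fatal gap is your choice $u\equiv 0$ on $[1,2]$: with that choice the vanishing-diffusivity limit is \emph{unique}, so no lack of selection can occur, no matter how the cascade on $[0,1]$ is tuned. Indeed, let $\vartheta$ be any weak$^*$ limit point of $\vartheta_{\kappa_j}$. It is a bounded distributional solution of \eqref{e:intro:advection} on $(0,2)$, and since $u$ is bounded, $t\mapsto\int\vartheta(t,\cdot)\phi\,dx$ is Lipschitz for every smooth $\phi$, i.e.\ $\vartheta$ is weakly continuous in time. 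On $[0,1)$ the field is smooth, so $\vartheta$ coincides with the unique inviscid solution $\vartheta^{\mathrm{adv}}$; because your cascade mixes, $\vartheta^{\mathrm{adv}}(t,\cdot)\rightharpoonup 0$ as $t\to 1^-$, hence $\vartheta(1,\cdot)=0$ by weak continuity. On $[1,2]$ the equation reduces to $\partial_t\vartheta=0$, so $\vartheta\equiv 0$ there. Thus the "valley" and "peak" subsequences produce the \emph{same} limit: $\vartheta^{\mathrm{adv}}$ on $[0,1)$ followed by $0$. In particular your claimed conservative limit does not conserve the $L^2$ norm: it vanishes identically on $[1,2]$. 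The step where you "upgrade weak$^*$ to strong convergence since the scalar remains at controlled scales in the valley phase" is exactly what fails: in the valley phase the solution tracks the inviscid one, which sits at scales tending to $0$ as $t\to 1$, so strong compactness at and after $t=1$ is impossible. The dichotomy you engineer at $t=1$ is a dichotomy of \emph{strong} $L^2$ norms (fine-scale structure surviving versus being erased), and this is invisible to the weak$^*$ topology; freezing the field afterwards preserves precisely nothing of it.

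This is why the paper does not switch the field off after $t=1$ but extends it by time reflection, $u(t,x)=-u(2-t,x)$ for $t\in[1,2]$ (formula \eqref{vectorfield_VV}): the reflected field \emph{unmixes} after the singular time. Along the conservative subsequence $\kappa_q=a_q^{2+3\epsilon}$ the paper shows, via a Gr\"onwall argument away from $t=1$ plus an It\^o--Tanaka cancellation across $[1-T_q,1+T_q]$, that the stochastic flow stays within $a_q^{1+\epsilon/2}$ of the flow of the truncated field $u\mathbbm{1}_{[1-T_q,1+T_q]^c}$; the surviving scale-$a_q$ chessboard is then transported back up to coarse scales on $(1,2]$, so the weak$^*$ limit equals the reflected inviscid solution, whose $L^2$ norm is full for a.e.\ $t$. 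Along the dissipative subsequence $\tilde\kappa_q$ the scalar is destroyed during the quiet window before $t=1$ and there is nothing left to unmix. The reflection is therefore not a convenience but the core mechanism: it converts the strong-norm dichotomy at $t=1$ into two genuinely distinct weak$^*$ limits. Any repair of your argument must include such an unmixing stage (or another device making the survival of fine scales weakly detectable); as written, the construction cannot prove the theorem.
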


This theorem is the first instance in the literature showing  the impossibility to select a unique solution of the advection equation \eqref{e:intro:advection}  by vanishing diffusivity.
% {\color{red} However, we also remark that very recently, after several months this paper was on arxiv, some advances were made in this direction}
 It identifies, in the limit, nonunique bounded solutions for the advection equation~\eqref{e:intro:advection} with a $C^\alpha$ velocity field, which were known to exist~\cite{hamilt1,hamilt2,ACM-JAMS,Gautam}. In contrast, a unique solution of~\eqref{e:intro:advection} is selected as vanishing-diffusivity limit for velocity fields within the DiPerna-Lions theory~\cite{DPL89} but for solutions lacking the required integrability for the theory to apply (see~\cite{PBGC}). In turn, nonuniqueness of solutions in this latter context has been shown in a series of papers~\cite{Modena1,Modena2,Modena3,chesluo1,chesluo2}%, while in~\cite{PBGC} it is shown that the Lagrangian solution is selected in the vanishing-diffusivity limit
. Therefore, Theorem \ref{t_mainadvection} provides a full picture for the selection principle for bounded solutions of the advection equation~\eqref{e:intro:advection} with a $C^\alpha$ velocity field in the following sense:
\begin{itemize}
\item[(1)] If the velocity field is in $C^{1}$ then there exists a unique bounded solution of the advection equation and it is selected by vanishing diffusivity.
\item[(2)] For any $\alpha <1$ there exists a velocity field  $ u \in C^\alpha$ such that  there are at least two distinct bounded solutions of the advection equation selected by vanishing diffusivity.
\end{itemize}

Theorem~\ref{t_mainadvection} is motivated by the fundamental open problem of the selection of solutions of the Euler equations as vanishing-viscosity limit of solutions of the Navier-Stokes equations. The results in~\cite{BuckmasterVicolAnnals} show that no selection is possible as limit of weak solutions in $L^\infty ([0,T];  L^2(\T^n))$ of the Navier-Stokes equations. The next step would be addressing the question of selection as limit of Leray-Hopf solutions (which anyway are not expected to be unique, see~\cite{jiasverakselfsim,jiasverakillposed,LerayHopf}) or smooth solutions. The results in~\cite{BuckmasterVicolAnnals} can be seen as a mathematical indication of a negative answer, and our Theorem~\ref{t_mainadvection} supports a negative answer as well, providing a proof in the case of passive scalars.

%%%DECOMMENT BEFORE FINAL VERSION After the completion of this manuscript, Scott and Vicol ... presented a more precise result based on sophisticated quantitative homogeneization techniques in the context of Theorems~\ref{t_main_OC} and~\ref{t_mainadvection}. In their case, anomalous dissipation does not rely on finite time mixing, works for general initial data and happens continuously in time without self-similar concentration of the solution. In turn it comes from a more complex and indirect argument with respect to the present paper.

\medskip

The selection of a unique weak solution  is a key question for many different PDEs with multiple weak solutions. For instance, the entropy conditions or, equivalently, the vanishing diffusivity select a unique solution for scalar conservation laws, which in general possess infinitely many weak solutions. { The existence of a divergence-free velocity field in $C^\alpha$, for arbitrary $\alpha <1$, disproving the possibility of selection under vanishing viscosity for any non-constant initial data is an open problem. Partial results in this direction appeared subsequently to our work in \cite{AV23}, for a velocity field in $C^\alpha$, for $\alpha < \sfrac{1}{3}$, and later in \cite{HT23}, for a velocity field in $L^\infty$. In \cite{HT23} the authors also construct an example of  a bounded, divergence-free velocity field such that the vanishing diffusivity limit selects a unique solution with non decreasing energy profile.}
%A partial result in this direction appeared, ten months after the current paper, in \cite{HT23} for a velocity field in $L^\infty$. An inadmissibility criteria is also presented, namely that there exists a bounded, divergence-free velocity field such that the vanishing diffusivity limit select a unique solution with increasing energy profile. } 
%We remark that some results in this direction were made, around 10 months after this paper was on arXiv,  in \cite{AV23} for $\alpha <1/3$ and in \cite{HT23} for a velocity field in $L^\infty$. In \cite{HT23} the authors also proved an inadmissibility criteria, i.e. there exists a divergence-free velocity field in $L^\infty$ such that the vanishing diffusivity limit select a unique  solution with  non decreasing energy profile.   }
 Besides vanishing diffusivity, another conceivable selection criterion for the advection equation~\eqref{e:intro:advection} is based on a regularization of the velocity field, that is, by considering limit points of solutions of~\eqref{e:intro:advection} in which the velocity field $u$ is replaced by some regularization $u_\sigma \to u$. This question has been addressed and answered in the negative in~\cite{ciampa,Giri-DL}, yet relying on extremely ad-hoc regularizations of the velocity field, explicitly tuned on the singularities of the velocity field. The freedom in the choice of the sequence $u_\sigma$ is exploited in an essential way in~\cite{ciampa,Giri-DL} in order to generate several distinct ways to bypass the singularity which are maintained in the limit $\sigma\to 0$. In particular, the chosen regularization is not based on convolution with a smooth kernel. In our third result we show that selection cannot be obtained by regularization of the velocity field by convolution with a smooth kernel in space-time (notice that we do not make any assumptions on the profile of the kernel $\varphi$). 

\begin{maintheorem}[Lack of selection by convolution]\label{t_main_convol} 
Let $C>0$ and $\alpha \in [0,1)$ and let $\varphi \in C^\infty_c((-1,1) \times B(0,1))$ be a convolution kernel in space-time
with $\| \varphi \|_{C^1} \leq C$. 
There exists  a divergence-free velocity field $u \in C^\alpha ([0, 2] \times \T^2)  $ and an initial datum $\vartheta_{\initial} \in C^\infty(\T^2)$ with $ \int_{\T^2} \vartheta_{\initial}=0$, both depending only on $\alpha$ and $C$ and not on $\varphi$, such that the  following holds.
The sequence $\vartheta_\sigma$ of solutions of the advection equation~\eqref{e:intro:advection} with velocity field $u \ast \varphi_\sigma$  and initial datum $\vartheta_{\initial}$ has at least two distinct limit points as $\sigma \to 0$ in the weak$^*$ topology, which are two distinct solutions of the advection equation~\eqref{e:intro:advection}. 
\end{maintheorem}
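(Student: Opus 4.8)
The plan is to reduce Theorem~\ref{t_main_convol} to the same geometric mechanism underlying Theorem~\ref{t_mainadvection}, by observing that convolution of a carefully designed self-similar velocity field with \emph{any} smooth kernel produces, at each relevant scale, a regularization that still admits two distinct ``transport channels'' around the singularity. Concretely, I would first construct $u$ as a superposition of mutually disjoint, rescaled building blocks concentrated on a nested sequence of space-time cylinders accumulating at a single singular point (say $(t,x)=(1,0)$), exactly as in the proof of Theorem~\ref{t_mainadvection}; the $C^\alpha$ bound follows from the self-similar scaling together with the disjointness of the supports. The key structural feature to arrange is that on each cylinder the flow realizes a ``splitting'' of the scalar into two macroscopic lumps whose relative transport depends on an unresolved microscopic choice, so that in the inviscid limit the two lumps can be sent to two genuinely different configurations.

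The central new difficulty compared with Theorem~\ref{t_mainadvection} is that here the regularizer is a fixed convolution $u*\varphi_\sigma$ rather than the tailored mollifications of~\cite{ciampa,Giri-DL}, and we are given no control on the profile of $\varphi$. The plan is therefore to exploit scale separation: choose the building blocks to live at scales $\lambda_k \to 0$ that decay so fast (super-geometrically, tuned against $C=\|\varphi\|_{C^1}$) that for a given $\sigma$ only \emph{finitely many} blocks are ``seen'' as smooth by the convolution at scale $\sigma$, while the remaining infinitely many blocks at scales $\lambda_k \ll \sigma$ are averaged out and contribute a negligible, essentially passive drift. First I would prove a \emph{localization lemma}: if $\sigma$ lies in a dyadic window $[\lambda_{k+1}^{\tau},\lambda_k^{\tau}]$ adapted to the block scales, then $u*\varphi_\sigma$ coincides, up to an error controlled in $C^0$ by $\|\varphi\|_{C^1}\sigma$, with the velocity field obtained by truncating the superposition at level $k$ and smoothing only the $k$-th block. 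This reduces the analysis to a single smoothed block on a fixed macroscopic cylinder.

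Next I would run two subsequences $\sigma_j \to 0$ and $\sigma_j' \to 0$, chosen respectively at the two endpoints of consecutive windows, so that along the first subsequence the smoothing resolves the $k$-th block in a way that routes the two lumps together (conserving the configuration) and along the second it routes them apart. The technical heart is a \emph{stability estimate} for the regularized transport on a single cylinder: because $u$ is only $C^\alpha$ and the solution of~\eqref{e:intro:advection} must be understood in the weak sense, I would propagate the scalar using the regular Lagrangian flow of $u*\varphi_\sigma$ and show, via a quantitative Gr\"onwall-type argument against the $C^1$ norm $\|u*\varphi_\sigma\|_{C^1}\lesssim C\sigma^{\alpha-1}$, that the trajectories remain $o(\lambda_k)$-close to the two idealized routes over the relevant time interval $\sim \lambda_k^{1-\alpha}$. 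The $C^1$ blow-up of the mollified field as $\sigma\to 0$ is the main quantitative obstacle, and it is precisely here that the super-geometric decay of $\lambda_k$ must be calibrated so that the Gr\"onwall exponent $\exp(\|u*\varphi_\sigma\|_{C^1}\,\lambda_k^{1-\alpha})$ stays bounded.

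I expect the main obstacle to be the uniformity of the scale-separation argument with respect to the \emph{unknown} kernel $\varphi$: since $u$ and $\vartheta_{\initial}$ must depend only on $\alpha$ and $C$, I cannot adapt the block scales to the actual shape of $\varphi$, only to its $C^1$ bound. The resolution I would pursue is to make the splitting mechanism \emph{robust}, i.e.\ topological rather than quantitative: the two limit points should differ by which side of a separatrix a fixed lump of mass ends up on, a binary outcome that is stable under any $C^0$-small perturbation of the flow map. Provided the localization lemma controls the convolution error in $C^0$ uniformly over all admissible $\varphi$, the two routing alternatives persist, the two subsequences produce weak$^*$ limits that disagree on a set of positive measure, and hence yield two distinct weak solutions of~\eqref{e:intro:advection}, completing the proof.
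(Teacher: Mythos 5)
There is a genuine gap, and it sits exactly at the heart of the argument: you never supply a mechanism by which the convolution scale $\sigma$ determines \emph{which} of the two outcomes occurs. You say that along one subsequence the smoothing ``routes the two lumps together'' and along the other ``routes them apart,'' but convolution with a fixed kernel always acts the same way --- it damps the components of $u$ at frequencies above $\sigma^{-1}$ and essentially preserves those below. Without some structure in $u$ whose \emph{net effect alternates with the index $q$ of the window containing $\sigma$}, every window produces the same qualitative evolution and all subsequences converge to the same limit; a ``topological separatrix'' does not create the dichotomy, it could at best preserve one. The paper's proof supplies precisely this missing ingredient: in addition to defining $u$ on $[1,2]$ by time reflection, it adds a \emph{swap velocity field} $u_{\swap}$ which, at each scale transition in $[1+T_{q+1},1+T_q]$, exchanges the even and odd chessboards. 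Convolution at scale $\sigma_q=a_q^{1+\gamma}$ freezes the dynamics on $[1-T_q,1+T_q]$ (Lemma~\ref{l_1}: the mollified field is uniformly small there, because the shear flows at frequency $\lambda_{q+1}$ average out), so the solution re-emerges at scale $a_q$ and then undergoes exactly $q$ parity swaps before time $1+T_0$. The binary outcome is the \emph{parity of $q$}: the subsequences $\sigma_{2q}$ and $\sigma_{2q+1}$ end up close to the even, respectively odd, chessboard evolution, and testing against $\mathbbm{1}_{A_0}$ separates the two limits. Nothing in your proposal plays the role of this counting argument.

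A secondary, but also serious, problem is your stability step. Propagating closeness of flows by Gr\"onwall against $\| u \ast \varphi_\sigma \|_{C^1}\lesssim C\sigma^{\alpha-1}$ cannot work across the whole hierarchy: before the singular time the trajectories must be followed through \emph{all} blocks $j\le q$, and the Lipschitz constants of the flow maps compound multiplicatively (of order $\prod_{j\le q}\lambda_{j+1}/\lambda_j\sim\lambda_{q+1}$), which no admissible choice of superexponential scales beats uniformly in $q$ if the comparison is made through a generic exponential estimate. The paper avoids Gr\"onwall entirely for this theorem: on a quantified good set $O_q$ (chessboard cells shrunk by $\sigma_q$), the shear structure makes the time-integrated displacement of the mollified flow \emph{exactly equal} to that of the unmollified one on each interval $\mathcal{I}_{j,2},\mathcal{I}_{j,3}$ (the displacements $\pm a_j/2$ are multiples of $a_{j+1}$, so the flow lands back on the period structure), hence $\XX^{\sigma_q}_{1-T_{j+1},1-T_j}=\XX_{1-T_{j+1},1-T_j}$ there with no error to propagate; in the critical window the only estimate used is the $L^\infty$ bound of Lemma~\ref{l_1}, not a $C^1$ bound. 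If you want to repair your proposal, you need both (i) an alternating structure in $u$ after the singular time whose parity is read off by the location of $\sigma$ in the scale hierarchy, and (ii) a flow-comparison argument that exploits the one-dimensional (shear) structure instead of a blind Gr\"onwall inequality.
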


We now present an informal discussion of the main ideas and tools in our approach, with a focus on the novelties and making several connections with the previous literature. In Section~\ref{section:vector_geometric} we give a more detailed and quantitative, but still heuristic, account of our proof, before proceeding in the rest of the paper to the full proofs. 

\medskip

Roughly speaking, the basic mechanism for the proof of our three theorems is the same and relies on the construction of a solution which gets mixed (that is, weakly converges to its spatial average) when the time approaches the singular time $t=1$. This is reminiscent of Depauw's example~\cite{Depauw} (see also~\cite{aize} for an earlier related construction), in which mixing is generated by alternating discontinuous dyadic shear flows which rearrange the solution into finer and finer dyadic chessboards, and of the more regular self-similar examples in~\cite{ACM-JAMS,YZ}. However, in order to achieve our goals (regularity of velocity field and passive scalar, anomalous dissipation, convergence to multiple limit points), we need several major twists in the construction. 

The first main novelty is replacing the dyadic decay $2^{-q}$ by a decay along a well-chosen, superexponential sequence~$a_q \downarrow 0$ at times $1 - T_q \uparrow 1$; correspondingly, the velocity field is localized at frequency $a_{q+1}^{-1}$ in the time interval $[1-T_q, 1-T_{q+1}]$. In all the three proofs, we interpret the convolution or the presence of diffusion (represented by the Brownian motion in the stochastic differential equation) as a ``filter on high frequencies'' acting at a well-chosen intermediate scale between  the well-separated scales $a_{q+1} \ll a_q$ and leaving low frequencies essentially unchanged.
%(the value of $\sigma_q$ depends on the convolution parameter or on the value of the diffusivity). %acting at an intermediate scale $a_{q+1} \ll \sigma_q \ll a_q$ (the value of $\sigma_q$ depends on the convolution parameter or on the value of the diffusivity). 

In Theorem~\ref{t_main_convol}, the convolution blocks the mixing process once the passive scalar reaches scale $a_q$. After the singular time $t=1$, we define the velocity field by reflection (which would un-mix the passive scalar along the corresponding scales), but we also add a velocity field which at every transition of scale has the effect to ``swap'' a parity marker on the chessboards. Depending on the parity of $q$, the passive scalar undergoes an even or an odd number of swaps, which produces two distinct solutions in the limit. 
\medskip

Dealing with diffusion is more delicate and requires a second major twist with respect to the previous approaches. We develop an original and fairly general Lagrangian approach (in a deterministic or stochastic sense) to anomalous dissipation. Solutions of the advection equation~\eqref{e:intro:advection} are transported by the flow of the velocity field and the Feynman-Kac formula represents solutions of the advection-diffusion equation~\eqref{e:intro:advdiff} via the flow of the associated stochastic differential equation (see~\eqref{feynman_kac_real}). Our Lagrangian approach allows to keep track of the regularity of the solutions as well as to make explicit the anomalous dissipation mechanism in a fully quantitative way in our situation, by means of a precise control of the frequency of the solution up to the precise scale where anomalous dissipation happens. 
%As such, they provide an explicit and quantitative description of the dissipation phenomena. 
 The anomalous dissipation relies on the following observation: if the diffusion is active for a time~$\tau$, then it acts on a spatial scale~$\sqrt{\kappa\tau}$, and additionally its effect is enhanced by the high frequency of the passive scalar. The last observation is at the core of the so-called enhanced-dissipation phenomenon, which received a considerable attention in the last years, from nonquantitative spectral characterizations~\cite{constantinannals}, to quantitative approaches~\cite{BCZ,DCZ,CZ2020} also in terms of the mixing properties of the velocity field~\cite{CZDE,pulsed}, to the randomly-forced case~\cite{bedrendiss}. In Theorems~\ref{t_main_OC} and~\ref{t_mainadvection} the quantified relation between the time scale, the frequency of the solution and the diffusivity parameter %we carefully relate the length of the time intervals and the value of the diffusivity parameter in order to
 allows, respectively avoids, time intervals on which the diffusion substantially mixes the solution. This switches on and off the dissipation effect 
%  due to diffusion 
 along different subsequences, allowing to identify in the limit a solution which conserves the $L^2$ norm and a solution which dissipates the $L^2$ norm. Our approach represents a major novelty and needs to be compared with the approach in~\cite{Gautam}, which relies on criteria for anomalous dissipation based on reverse interpolation inequalities for the passive scalar. It seems not possible to exploit the strategy of~\cite{Gautam} and obtain multiple limit points in the vanishing-diffusivity limit. Our approach identifies a Lagrangian mechanism and provides its quantification, establishing a fully general novel method which can be employed in problems exhibiting similar features.

\medskip

The issue of the regularity has been neglected in the above informal description. Finding more regular versions of Depauw's~\cite{Depauw} velocity field is a notoriously hard task, which has been addressed in a dyadic, self-similar setting in~\cite{ACM-JAMS,YZ}. However, the velocity fields in~\cite{ACM-JAMS,YZ} do not appear to be compatible with our choice of a superexponential sequence of scales $a_q$. Moreover, it is unclear which instruments could be employed to obtain the fractional regularity of the associated solutions. Instead, we directly employ a finer-scales version of Depauw's velocity field and directly regularize it at a scale finer than its own scale, slightly perturbing the scaling and directly controlling the error terms. The resulting velocity field is again made by alternating shear flows concentrated at well separated frequencies, in stark contrast with the velocity fields in~\cite{ACM-JAMS,YZ} which are genuinely two-dimensional. 
This last fact is in turn crucial in order to establish the uniform-in-diffusivity regularity of the passive scalar in~\eqref{bound_main_OC}. We stress that such uniform regularity does not follow from any functional argument (the advection equation~\eqref{e:intro:advection} is known not to propagate any fractional regularity of the initial datum~\cite{ACM-loss}), noticing also that the expected regularity of the passive scalar predicted by the Obukhov-Corrsin theory (namely, the admissible $\beta$ in \eqref{bound_main_OC}) worsens when the velocity field becomes more regular. Instead we prove a novel regularity estimate for the stochastic flow which relies in an essential way on the alternating shear-flow structure of the velocity field. The estimate is uniform in the stochastic parameter and therefore thanks to the Feynman-Kac formula can be translated into a regularity estimate for the passive scalar. 

\medskip

\noindent{\bf Plan of the paper.} In Section~\ref{section:vector_geometric} we present a quantitative heuristics of our arguments aimed at providing the reader with a more detailed, yet nontechnical understanding of our paper. Section~\ref{section:preliminaries} is devoted to a few preliminaries on deterministic and stochastic flows. In Section~\ref{section:parameters} we fix the parameters needed for our arguments and we construct the velocity field. We then move to the proof of Theorem~\ref{t_main_convol} in Section~\ref{section:prooftheorem}. The convergence in the vanishing-diffusivity limit to a solution which conserves the $L^2$ norm (which is a part of Theorem~\ref{t_mainadvection}) is shown in Section~\ref{section:conservativesolution}. In Section~\ref{section:nonconservative} we show the convergence in the vanishing-diffusivity limit to a solution which dissipates the $L^2$ norm, thus concluding the proof of Theorem~\ref{t_mainadvection} and showing the anomalous dissipation claimed in Theorem~\ref{t_main_OC}. In Section~\ref{section:regularity} we show the uniform-in-diffusivity regularity of the passive scalar, thus concluding the proof of Theorem~\ref{t_main_OC}. 

\medskip

\noindent{\bf Acknowledgments.} MC and MS were supported by the SNSF Grant 182565 and by the Swiss State Secretariat for Education, Research and lnnovation (SERI) under contract number M822.00034.
This work has been started during a visit of GC at EPFL as a Visiting Professor. The support and the warm hospitality of EPFL are gratefully acknowledged. GC is partially supported by the ERC Starting Grant 676675 FLIRT and by the SNSF Grant 212573 FLUTURA. MS thanks Lucio Galeati for the interesting discussion about the advection-diffusion equation during his visit at EPFL.

%
%\bigskip
%\hrule
%\hrule 
%\bigskip
%

%{\color{blue} 
%Put somewhere in the introduction:
%\begin{itemize}
%\item Endpoint: what happens for $\beta=\alpha^\circ$? Neanche per Eulero e' chiaro.
%\item Are there more suitable regularized versions of Depauw for these purposes? $L^\infty C^\beta$ non lo sappiamo fare al momento. 
%\end{itemize}}
%

%{\color{blue} Maybe for later, maybe for never:
%\begin{itemize}
%\item
%The condition $p \geq 2$ should not be completely unexpected. On the one hand, uniqueness for the advection-diffusion equation (at least, in terms of functional estimates) requires $L^2$ integrability in time, see~\cite[Theorem~3.7 and Corollary~3.10]{PBGC}. On the other hand, the passive scalar in our example will exhibit some sort of mixing for $t \uparrow 1$, which is incompatible with control in $L^{\infty} ([0,1]; C^\beta(\T^2))$ for $\beta>0$, which would correspond to $p=1$.
%\end{itemize}}

%
%
%\bigskip
%\hrule
%\hrule 
%\bigskip

\section{Strategy of the proof and heuristics} \label{section:vector_geometric}

In this section we provide an heuristic description of our construction and of the proofs of the three theorems stated in the introduction. Even though the presentation in this section oversimplifies several technical aspects and includes only heuristic estimates, it allows to introduce all the ideas and tools in the proofs, and at the same time it retains a quantitative enough character which motivates the choice of the many parameters in the construction. This section is intended to provide a guide for the reader through the actual proofs in the remaining of the paper.

\subsection{Geometric construction of the velocity field}\label{ss:vector}

Our construction features a divergence-free velocity field $u=u(t,x)$ defined for $(t,x) \in [0,2] \times \T^2$. The velocity field is smooth outside $\{1\} \times \T^2$ and at each  time $t \in [0,2] \setminus \{1\}$ is concentrated in frequency, where the frequencies blow up as the time approaches $1$.

Let us fix a sequence of frequencies $\{ \lambda_q\}_{q \in \N}$ which are well separated, namely with at least superexponential growth, and  a sequence of times $\{1-T_q\}_{q \in \N}$ with $T_q \downarrow 0$ to be chosen later in terms of the frequencies.
We first define the velocity field for $t \in [0,1]$. On each time interval $[1-T_{q}, 1-T_{q+1}]$ the velocity field is constructed as follows.
\begin{enumerate}
\item[(1)]\label{heuristics_1} The velocity field is composed by two shear flows (one horizontal and one vertical) with frequency concentrated at $\lambda_{q+1}$ rearranging the chessboard of side $\lambda^{-1}_{q}$ into the chessboard of side $\lambda^{-1}_{q+1}$ (see Figure~\ref{f:twoshears}). 

\begin{figure}[h]
\begin{minipage}{4cm}
\centering
\includegraphics[width=4cm]{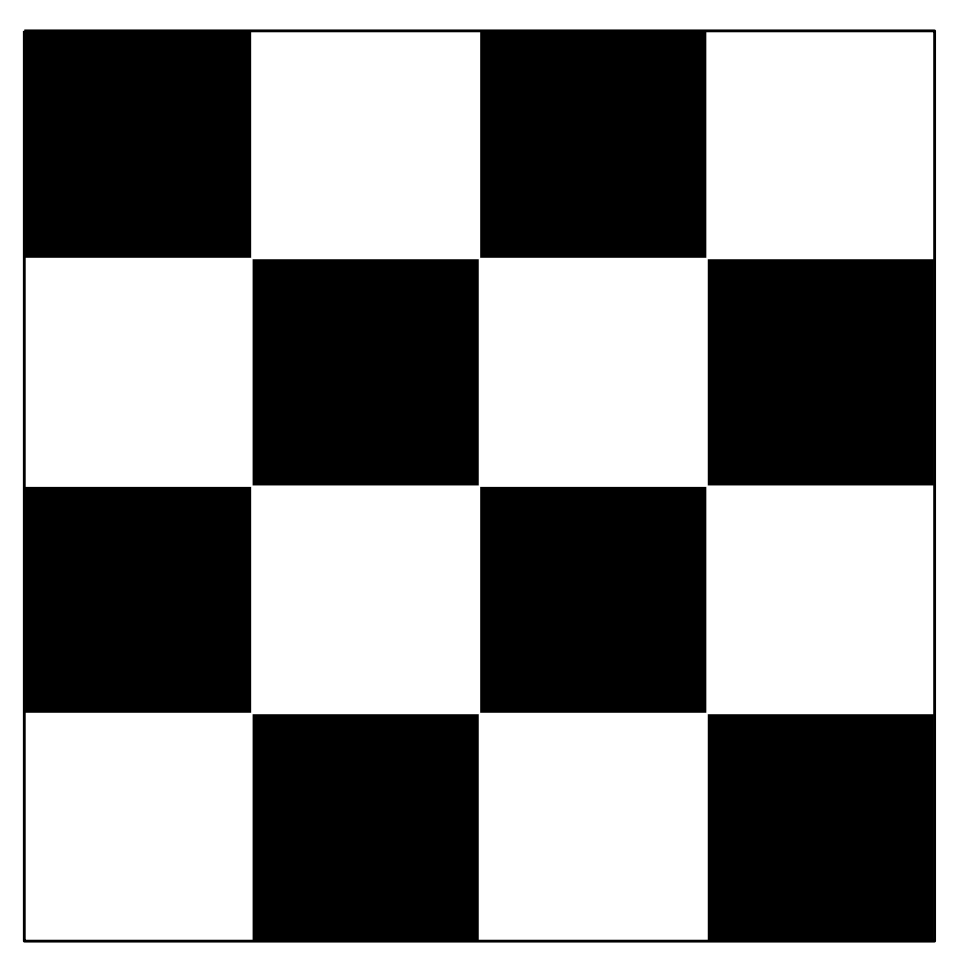}
\end{minipage}
\begin{minipage}{4cm}
\centering
\includegraphics[width=4cm]{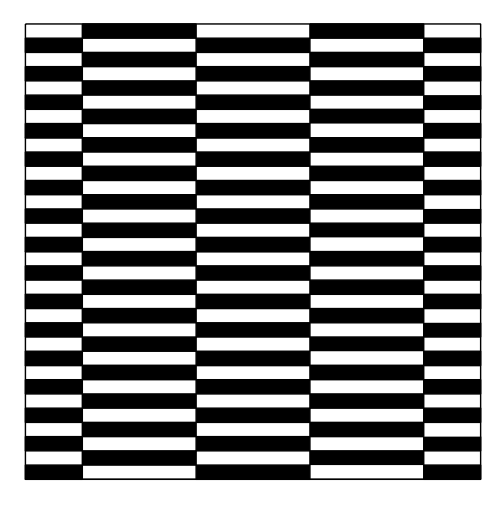}
\end{minipage}
\begin{minipage}{4cm}
\centering
\includegraphics[width=4cm]{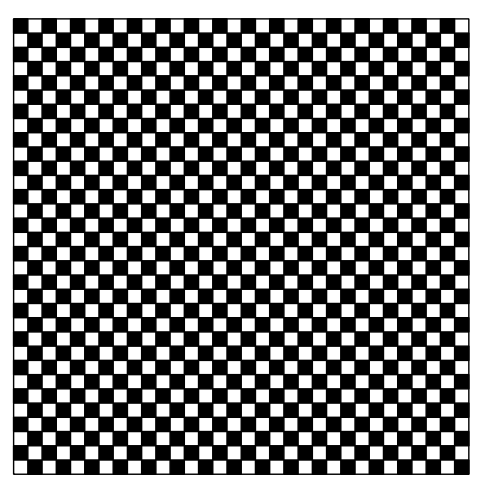}
\end{minipage}
\caption{The action of the two shear flows rearranging the chessboard of side $\lambda^{-1}_{q}$ into the chessboard of side $\lambda^{-1}_{q+1}$.}\label{f:twoshears}
\end{figure}

\item[(2)]\label{c:interval_long} For some fixed $m \in \N$, on the first part of the time interval $[1-T_{q}, 1-T_{q+1}]$ and only for $q \in m\N$ (by which we mean that $q$ is a multiple of $m$) we add a long time subinterval on which the velocity field is zero. The choices of $m \in \N$ and of the length of the subinterval are tuned with certain choices of the dissipation and the dominant frequency of the solution of the advection-diffusion equation at that time.
\end{enumerate}

In the remaining time interval $[1,2]$ (after the singularity at time $t=1$) the velocity field is defined by reflection, and for Theorem~\ref{t_main_convol} only we add a swap velocity field as follows. 
\begin{enumerate}
\item[(3)]\label{c:interval_reflect} The reflected velocity field is defined by
\begin{equation}
\label{eqn:campodopo}
u(t,x) = - u(2-t,x) \qquad \text{for $t \in [1,2]$ and $x \in \T^2$.}
\end{equation}
\item[(4)]\label{heurstics_3} For Theorem~\ref{t_main_convol} we add a swap velocity field $u_{\swap}$ which, in each time interval $[1+T_{q+1}, 1+T_{q}]$, is active in a subinterval in which the velocity field in \eqref{eqn:campodopo} is zero. The task of the swap velocity field on the time interval $[1+T_{q+1}, 1+T_{q}]$ is to swap the parity of the chessboard of side $\lambda^{-1}_q$ (namely, it exchanges the ``black squares'' with the ``white squares'', see Figure~\ref{f:swap}). The swap velocity field acts after the velocity field $u$ in~(3) has already reconstructed (at a certain time $1+T_{q+1} + \tau < 1+ T_q$) the chessboard of side $a_q$  from the the chessboard of side~$a_{q+1}$ and it has frequency concentrated at~$\lambda_{q+1}$.

\begin{figure}[h]
\begin{minipage}{3cm}
\centering
\includegraphics[width=3cm]{immagine1}
\end{minipage}
\hspace{1.5cm}
\begin{minipage}{3cm}
\centering
\includegraphics[width=3cm]{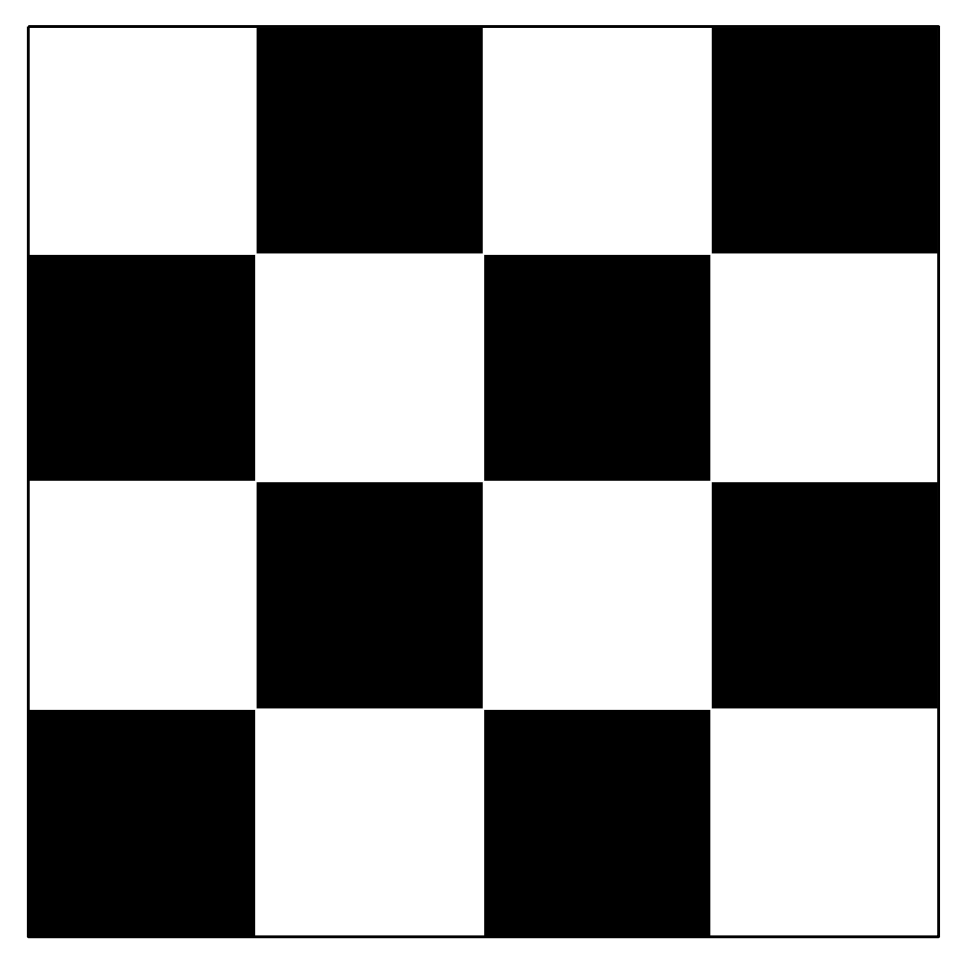}
\end{minipage}
\caption{The action of the swap velocity field}\label{f:swap}
\end{figure}

\end{enumerate}

In any time interval $[1-T_{q}, 1-T_{q+1}]$ or $[1+T_{q+1}, 1+T_{q}]$, the constructed velocity field either vanishes or has frequency concentrated at $\lambda_{q+1}$ (it is actually almost $\lambda_{q+1}$-periodic in a suitable sense). This fundamental property allows to localize the effect of the convolution and the effect of the diffusion to certain specific time intervals, following the general principle that in our regimes \emph{both convolution and diffusion act as a filter on high frequencies, leaving low frequencies relatively unchanged.}

To make the heuristics visually clear we refer in this section to velocity fields that ``mix a chessboard into a finer one'' with implicit reference to a simple and widely known example by Depauw~\cite{Depauw} (see also \cite{Giri-DL}, where such construction was used to build an example of lack of selection for bounded velocity fields via an ad-hoc regularization not of convolution type). We underline three major differences between the velocity fields in~\cite{Depauw,Giri-DL} and ours. First, it is fundamental for us to separate consecutive frequencies in a sharp way by using a superexponential sequence of frequencies. Second, we introduce time delays in order for 
 the diffusion to trigger the anomalous dissipation and the lack of selection.
Third, in order to guarantee H\"older regularity in our example, all the statements about rearrangements of chessboards must be intended ``up to small errors'' due to the presence of small-scale mollifications that guarantee the smoothness of the velocity field and of the passive scalar away from the singular time $t=1$. Without such mollifications, the heuristics holds only in fractional Sobolev spaces such as $W^{\alpha,1} (\T^2)$, rather than in H\"older spaces $C^\alpha(\T^2)$. 
We observe in passing that, without mollifications and under fractional Sobolev regularity bounds, full dissipation for the passive scalar can be obtained in Theorem~\ref{t_main_OC}, namely replacing the weaker property~\eqref{diss_main_OC} with 
$$
2 \limsup_{\kappa \to 0} \kappa \int_0^1 \int_{\T^2} | \nabla \vartheta_\kappa|^2 \, dx\,dt
=  \| \vartheta(0,\cdot) \|^2_{L^2(\T^2)} \,.
$$

Obtaining the correct regularity of the velocity field and the associated solutions is a central issue in our approach. The same difficulty has been faced in~\cite{ACM-JAMS,YZ}, where the same rate of decay for the mixing scale as in~\cite{Depauw} was obtained for more regular (Sobolev, or even Lipschitz) velocity fields. However, the construction in~\cite{ACM-JAMS,YZ} has been achieved by a completely different argument due to necessity to keep under control the evolution in the entire space without allowing for error terms, as needed when dealing with a statement concerning mixing. In particular, the velocity fields in~\cite{ACM-JAMS,YZ} are genuinely two-dimensional. In our case we retain by mollification the alternating shear-flow structure of the velocity field from~\cite{Depauw}, which is essential in order to quantitatively control the regularity of the solution in presence of diffusion. However, we have to face the issue to control the evolution and quantify the dissipation of the passive scalar out of the small set where the mollification takes place.%, discussed in Section~\ref{ss:regsol} below.

\subsection{Geometric construction of the initial datum}
We consider as initial datum the function $\vartheta_0$ which equals $1$ on the even chessboard of side $2 \lambda_0$ (and therefore, has periodicity $\lambda_0^{-1}$) and $-1$ on the odd chessboard. As before, the actual proof involves a mollification of this function, but at the present heuristic level we ignore this issue.

\subsection{Qualitative behaviour of solutions}\label{ss:qualitsol}
Up to time $t=1$, the solution of the advection equation~\eqref{e:intro:advection} with velocity field $u$ is unique thanks to its (local-in-time) regularity: at each time $1-T_q$, the solution $\vartheta$ approximately equals the chessboard of side~$\lambda_q^{-1}$. Therefore, $\vartheta$ can be extended by weak continuity at the singular time~$t=1$ as $\vartheta(1,\cdot) \equiv 0$.

After the singular time $t=1$ we can consider three possible continuations of this solution. First, we consider the solution which fully dissipates the $L^2$ norm, namely
$$
\vartheta^{\rm mix}(t, \cdot) \equiv 0 \qquad \text{ for $t\in [1,2]$.}
$$
Second, we observe that if point (4) in the construction in Section~\ref{ss:vector} is omitted, then a solution which conserves the $L^2$ norm is  given by reflection, namely
$$
\vartheta^{\rm cons}(t, \cdot) = \vartheta(2-t, \cdot) \qquad \text{ for $t\in [1,2]$.}
$$
For the velocity field introduced in Section~\ref{ss:vector} with the addition of the swap velocity field in~(4), we can find two distinct backward solutions on $[1,2]$, starting at $t=2$, which are compatible with $\vartheta$ at time~$1$, namely $\vartheta^{\rm odd}$ generated by evolving backward the final datum $\vartheta^{\rm odd} (2, \cdot) = \vartheta_0$, and $\vartheta^{\rm even}$ generated by evolving backward the final datum $\vartheta^{even} (2, \cdot) =- \vartheta_0$. We observe that $\vartheta^{\rm odd}$ and $\vartheta^{\rm even}$ are characterized at times $1+T_q$ for  $q\in \N$ and equal alternatively the odd or even chessboard of side $\lambda_q^{-1}$.

\subsection{Lack of selection by convolution (Theorem~\ref{t_main_convol})}

We first present the mechanism behind the lack of selection in Theorem~\ref{t_main_convol} as it is the simplest one.
% and uses properties~(1), ~(3) and ~(4) in Section~\ref{ss:vector}. 
Fix $\sigma>0$ such that for some~$q\in \N$
\begin{equation}\label{e:sigmaparity}
\lambda_{q+1}^{-1} \ll \sigma \ll \lambda_q^{-1} 
\end{equation}
(more precise quantitative bounds will be required in the proof), and consider the convolution $u \star \varphi_{\sigma}$ of the velocity field $u$ (including the swap velocity field in~(4)) with a kernel $\varphi_\sigma$. The convolution does not change much the low-frequency part of the velocity field, namely its restriction to the time intervals~$[0, 1-T_q] \cup [1+T_q, 1]$ thanks to the condition $\sigma \ll \lambda_q^{-1}$. In contrast, the convolution almost completely cancels the fast oscillations of the velocity field in $[ 1-T_q,1+T_q]$, thanks to the condition~$\sigma \gg \lambda_{q+1}^{-1}$. As a consequence, the flow of $u \star \varphi_{\sigma}$ is close in the $L^1$ norm to the flow of the smooth velocity field $u \mathbbm{1}_{[1-T_q, 1+ T_q]^c}$. Hence, the evolution of $\vartheta_{\initial}$ under the velocity field $u \star \varphi_{\sigma}$ undergoes an even or odd amount of parity swaps (under the action of the swap velocity field $u_{\swap}$) depending on the parity of the integer $q$ for which~\eqref{e:sigmaparity} holds, and therefore it possesses two qualitatively distinct behaviours along two subsequences of $\sigma\to 0$. As $q \to \infty$, these two distinct behaviours are maintained and give rise to the two limit solutions~$\vartheta^{\rm even}$ and $\vartheta^{\rm odd}$ of Section~\ref{ss:qualitsol}.

\subsection{Qualitative behaviour of solutions in the presence of diffusion}

From now on we consider the velocity field $u$ introduced in Section~\ref{ss:vector} ignoring the part (4) of the construction% (that is, ignoring the swap velocity field)
. Given the unique stochastic flow $\XX^\kappa : \Omega \times [0,2] \times \T^2 \to \R^2$ solution of the stochastic differential equation (SDE) 
\begin{align}\label{d:SDE}
\begin{cases}
d \XX_{t}^\kappa =   u(t, \XX_{t}^\kappa) dt + \sqrt{2 \kappa}  {d} \WW_{t} 
\\
\XX_{0}^\kappa = x_0
\end{cases}
\end{align}
for a fixed probability space $(\Omega, \mathcal{F}, \mathbb{P})$, the unique bounded solution of the advection-diffusion equation~\eqref{e:intro:advdiff} is represented by the Feynman-Kac formula
$$
 \int_{\T^2} f  \vartheta_\kappa \, dx=  \mathbb{E} \int_{\T^2} f(\XX_t^{\kappa}) \vartheta_0 \, dx \qquad \mbox{for any $f \in L^\infty(\T^2)$.}
$$
The effect of the diffusivity in the advection-diffusion equation~\eqref{e:intro:advdiff}, or equivalently at Lagrangian level the effect of the Brownian motion in the SDE~\eqref{d:SDE}, can be seen as a regularizing mechanism. It shares similarities, as well as relevant differences, with the regularization by convolution of the velocity field directly at the level of the advection equation~\eqref{e:intro:advection}. The regularization by convolution acts at the level of the velocity field, whereas in~\eqref{d:SDE} the ``regularization by noise'' acts at the level of the trajectories by adding the random perturbation provided by the Brownian motion. Remarkably, the two regularizations act on different scales. For the regularization by noise the typical scale of regularization is proportional to the length of the time interval on which it acts. On a time interval of length $\tau$ where the velocity field vanishes, the regularization by convolution does not modify the flow (and therefore, the solution), while the regularization by noise acts on the solution as a convolution with a Gaussian with variance proportional to $\kappa \tau$. The stochastic Lagrangian interpretation will be particularly useful for our analysis due to the fundamental estimate for the Brownian motion
%
%
%This in turn entails a regularization with typical scale is related at the level of the SDE \eqref{d:SDE} to the useful estimate on the Brownian motion
\begin{equation} \label{stima:brown}
\mathbb{P} \left ( \omega \in \Omega : \sup_{t \in [0, T]}  \sqrt{2 \kappa} | \WW_t | \leq c \right ) \geq 1-  2 e^{ \sfrac{-c^2}{(2 \kappa T)}} \,.
\end{equation}
%which we will employ to get estimates also in presence of advection. 
%Thirdly, %common feature of the two approaches is that
% the dissipation effect is more powerful when the initial datum is (almost) periodic at a scale comparable with the viscosity parameter $\kappa$% or with the convolution parameter $\eps$
% ;  this mechanism lies at the basis of the enhanced dissipation phenomenon and we will strongly employ it in our construction. 
%
We finally observe that the effect of the dissipation is stronger when acting on highly-oscillatory solutions. This observation lies at the heart of the so-called enhanced dissipation phenomenon and will be employed in an essential way in our proofs.

\subsection{Convergence via diffusion to a  solution which dissipates the $L^2$ norm (Theorems~\ref{t_main_OC} and~\ref{t_mainadvection})} \label{ss:eur:dissipative}
Our strategy relies on the choice of a sequence of diffusivity parameters $\{\tilde \kappa_{q}\}_q$ such that the solution $\vartheta_{\tilde \kappa_{ q}}$ of the advection-diffusion equation~\eqref{e:intro:advdiff} enjoys the following properties:
\begin{itemize}
\item
{\bf Goal~1:} It can be well approximated by the solution of the advection equation until time $1-T_{q}$, in particular at time $1-T_{ q }$ it is close to an (almost) $\lambda_{ q }^{-1}$-periodic function;
\item
{\bf Goal~2:} It dissipates half of its $L^2$ norm in the time interval $[1-T_{ q }, 1-T_{{ q }+1}]$ only by the effect of diffusion. 
\end{itemize}
Since the velocity field has frequency $\lambda_q$ in the time interval $[1-T_{q-1}, 1-T_q]$ and the solution of the advection equation resembles the  chessboard of side $\lambda_{ q  }^{-1}$ at time $1-T_{ q  }$, in order to accomplish Goal~1 we need to require (at least) that the stochastic  flow deviates on average from the flow of $u$ less than the typical side of the chessboard $\lambda_q^{-1}$ in the time interval $[1-T_{q-1}, 1-T_q]$, namely
\begin{equation}
\label{eqn:close-flows}
| \XX^{\tilde \kappa_q}_{1-T_{ q }, 1-T_{q-1}} - \XX_{1-T_{ q }, 1-T_{q-1}}| \ll \lambda_q^{-1}.
\end{equation}
In fact, a technical point in the proof involves a precise control of the set where this estimate holds, together with suitable estimates on the analogue quantities $| \XX^{\tilde \kappa_q}_{1-T_{ k }, 1-T_{k-1}} - \XX_{1-T_{ k}, 1-T_{k-1}}|$ for $k<q$. Estimate~\eqref{eqn:close-flows} entails, by \eqref{stima:brown} with $c=a_{ q }$, $\kappa = \tilde \kappa_q$ and $T= T_{ q -1}-T_{{ q }}$,  the following constraint on $\tilde \kappa_q$
 \begin{equation}\label{eqn:close-flows-bis}
 \tilde \kappa_q  \lambda_{q} ^2 ( T_{q-1} - T_q) \ll 1 .
 \end{equation}

To achieve Goal~2 we recall the following. 
% that, by scaling, a solution of the heat equation $\partial_t \vartheta = \tilde \kappa \Delta \vartheta$ with a $\lambda$-periodic initial datum of $L^2$ norm of order one dissipates the $L^2$ norm at a rate~$e^{-\tilde \kappa \lambda ^2 t}  $.
If $\vartheta_{\kappa, \lambda} \in L^\infty ((0,T); L^2 (\T^2))$ is a solution of the heat equation $\partial_t \vartheta_{\kappa, \lambda} - \kappa \Delta \vartheta_{\kappa, \lambda} =0 $ with initial datum $\vartheta_{\initial , \lambda}  (x) = \vartheta_{\initial } (\lambda x) \in L^2  (\T^2)$ (for some $\lambda \in \N$)  for a  given  function~$\vartheta_{\initial }\in L^2  (\T^2)$ with zero average, then by the scaling and decay properties of the heat equation
%$$\| \vartheta_{\kappa, \lambda}(t, \cdot)\|_{L^2(\T^2)}^2  \leq e^{- \kappa \lambda^2 t} \| \vartheta_{\initial , \lambda} \|_{L^2(\T^2)}^2 .  $$
%
%Indeed, such a solution is characterized by the scaling property of the heat equation by $\vartheta_{\kappa, \lambda} (t,x) = \vartheta_{\kappa} (\lambda^2 t, \lambda x)$, where $\vartheta_\kappa$ is the unique solution of the heat equation with initial datum $\vartheta_{\initial}$; therefore using that the $L^2$ norm is invariant by periodizing on $\T^2$ and the standard decay estimate for the heat equation we conclude that
$$ \| \vartheta_{\kappa, \lambda}(t, \cdot) \|_{L^2(\T^2)}^2 = \| \vartheta_{\kappa,1} (\lambda^2 t, \cdot ) \|_{L^2(\T^2)}^2    \leq e^{- \kappa \lambda^2 t} \| \vartheta_{\initial} \|_{L^2(\T^2)}^2 = e^{- \kappa \lambda^2 t}  \| \vartheta_{\initial, \lambda} \|_{L^2(\T^2)}^2 \ .  $$

 Hence, in the time interval $[1-T_q, 1-T_{q+1}]$ 
with initial datum the (almost) $\lambda_{ q }^{-1}$-periodic function $\vartheta_{\tilde \kappa_q} (1-T_{ q }, \cdot )$, Goal~2 entails the constraint 
$$
 \label{eqn:eur-1}
  \tilde \kappa_q \lambda_{ q }^2 ( T_{q} - T_{q+1}) \gg 1.
$$
Since the construction requires $ T_q - T_{q-1} \to 0$ as $q \to \infty$, we will require $ T_q - T_{q+1} \gg  T_{q-1} - T_q$ only for a lacunary sequence (more precisely, for every multiple of a fixed integer $m$). This is necessary in order to make the two previous constraints compatible at least on a subsequence of $q \to \infty$.

\subsection{Convergence via diffusion to a solution which conserves the $L^2$ norm (Theorem~\ref{t_mainadvection})}\label{ss:eur-dissip}
For every $ q$, we want to choose $\kappa_{ q}$ in such a way that the solution $\vartheta_{\kappa_{ q}}$ of the advection-diffusion equation is close to the solution $\vartheta_q$ of the advection equation with the smooth velocity field  $u_q =  u \mathbbm{1}_{[1-T_q, 1+ T_q]^c} $. We notice that  $\vartheta_q$  has some convenient explicit features: %starting from the standard chessboard, 
first of all, $\vartheta_q (1-T_q , \cdot )=\vartheta_q (1+T_q, \cdot )$ agrees (up to small errors) with the chessboard of side $\lambda_q^{-1}$, and moreover by symmetry $\vartheta_q(2,\cdot) = \vartheta_q(0,\cdot)$.
In order to guarantee the closeness sketched above we need 
%The goal described above  requires 
two controls of different nature:
\begin{itemize}
\item%[(Goal 1')] 
{\bf Goal~1':} Similarly to Goal~1 in Section~\ref{ss:eur-dissip}, the solution $\vartheta_{ \kappa_{ q}}$ of the advection-diffusion equation is well approximated by the solution of the advection equation until time $1-T_{ q }$ and after time~$1+T_{ q }$; in particular, it is close to an (almost) $\lambda_{ q }^{-1}$-periodic function at time $1-T_{ q }$;
\item%[(Goal 2')]  
{\bf Goal~2':} In full contrast with Goal~2 in Section~\ref{ss:eur-dissip}, the solution $\vartheta_{ \kappa_{ q}}$ does not dissipate energy, and actually it remains essentially unchanged in the interval $[1-T_{ q }, 1+T_{{ q }}]$. 
\end{itemize}

Goal~1' can be achieved similarly to Goal~1 in Section~\ref{ss:eur-dissip} and requires (at least) the constraint~\eqref{eqn:close-flows-bis} on the diffusivity $\kappa_{q}$.

In order to accomplish Goal~2' we need two steps. First, the so-called It\^o-Tanaka trick (first used in the context of the advection equation with multiplicative noise in \cite{FGP}) allows to exploit suitable cancellations to show that the stochastic 
flow remains almost constant on $[1-T_q, 1 - T_{q+1}]$. 
This relies on the fact that the velocity field on such time interval either vanishes or has frequency at least $\lambda_{q+1}$ and entails the fact that the typical displacement of the Brownian motion 
is much larger than the inverse of the typical frequency of the velocity field in the time interval  $[1-T_q, 1- T_{q+1}]$, namely
\begin{equation}\label{eur:ito-tanaka}
\sqrt{\kappa_q  (T_q - T_{q+1})} \gg \lambda_{q+1}^{-1} .
\end{equation}
Second, we need to require that the solution $\vartheta_{\kappa_q}$ dissipates only a small portion of $L^2$ norm in the time interval $[1-T_q, 1+T_q]$, which entails the constraint 
$$
 \kappa_q \lambda_{ q }^2 T_q \ll 1.
$$

\subsection{Regularity of the velocity field and uniform-in-diffusivity bounds on the passive scalar}\label{ss:regsol}%  $L^{p^\circ} ([0,1]; C^\beta(\T^2))$-bounds of solutions in $\kappa$}
The velocity field rearranges in the time interval $[1- T_{q-1}, 1- T_q]$  the chessboard of side $\lambda_{q-1}^{-1}$ into the chessboard of side  $\lambda_q^{-1}$, and therefore the associated trajectories have a displacement of order $ \lambda_{q-1}^{-1}$ with frequency concentrated at $\lambda_q$. This implies by interpolation
%
%which implies%$\| u \|_{L^\infty} \sim \frac{\Delta s}{\Delta t} \sim \frac{\lambda_{q-1}^{-1}}{\Delta T_{q-1} }$ and
%By the explicit construction of the velocity field and of its frequency concentration, we can get precise estimates of the space $C^\alpha$-norms
$$ \| u(t, \cdot) \|_{C^\alpha( \T^2)} \sim{} \|u (t, \cdot ) \|_{L^\infty (\T^2)} \lambda_q^\alpha \sim \frac{\lambda_{q-1}^{-1} \lambda_q^\alpha}{ (T_{q-1} - T_q) } 
\qquad \text{ and } \qquad
 \| \vartheta(t, \cdot) \|_{C^\beta( \T^2)} \sim \|\vartheta (t, \cdot ) \|_{L^\infty (\T^2)} \lambda_q^\beta \sim \lambda_q^\beta 
$$
for $t\in[1-T_{q-1}, 1-T_q]$. As a first consequence, we observe that $u \in C^0([0,1] \times \T^2)$ as soon as~$\gamma \sim p^\circ \beta<1$; the latter condition holds in particular close to the  Onsager criticality threshold $\alpha = \alpha^\circ =1/3$ and~$p= p^{\circ} =3$. Moreover we estimate
%This allows to find necessary and sufficient conditions in various regularity classes, for instance to obtain a bounded velocity field (formally with $\alpha=0$% we have $\lambda_{q-1}^{-1} \leq \Delta T_{q-1} $
%) %. It also allows precise computations on quantities like t
%or the regularity classes in Theorem~\ref{t_main_OC}
\begin{equation} \label{heuristics_regularity_u}
 \| u \|^p_{L^{p} ([0,1]; C^\alpha(\T^2))} \sim \sum_{q} \lambda_{q-1}^{-p} \lambda_q^{p\alpha} ( T_{q-1} - T_q)^{1- p}
\end{equation} 
%The previous construction builds a bounded velocity field in spacetime, provided that the condition $1 \gg \frac{a_{q-1}}{T_q-T_{q-1}}$ is satisfied. 
%Fine tuning the choice of $a_q, T_q$ one can realize the previous construction with velocity field of any regularity $C^\alpha$ and that the conditions imposed on  $a_q, T_q$ are compatible with the requests below. Heuristically this corresponds to require
%$$1 \gg \| u \|_{C^\alpha([1-T_{q-1}, 1-T_q]\times \T^2)} \sim{} \|u \|_{L^\infty ([1-T_{q-1}, 1-T_q]\times \T^2)} \lambda_q^\alpha = \frac{a_{q-1} \lambda_q^\alpha}{T_q-T_{q-1}} .$$
%
and
%Uniform estimates on the solution $\vartheta_\kappa$ are by far less trivial to obtain with respect to the ones of the velocity field. In order to guess their possible behaviour, we study heuristically the regularity of the solution $\vartheta$ of the advection equation \eqref{e:intro:advection} with a  regularized chessboard of side $\lambda_0^{-1}$ as an initial datum, such a solution is unique until time $1$. Since at each time $1-T_q$ the solution resembles a chessboard of side $\lambda_q^{-1}$ and it is a bounded solution it is reasonable to expect that $ \| \vartheta (t, \cdot ) \|_{C^\beta (\T^2) }  \leq \lambda_{q}^\beta $ which gives the following bound
\begin{equation} \label{heuristics_regularity_theta}
\|\vartheta\|^{p^\circ}_{L^{p^\circ} ([0,1]; C^\beta(\T^2))} \sim   \sum_{q} \lambda_q^{p^{\circ}\beta} ( T_{q-1} - T_q) .
\end{equation}
Notice that  the passive scalar in our example exhibits some sort of mixing for $t \uparrow 1$, which is incompatible with a control of $\vartheta$ in $L^{\infty} ([0,1]; C^\beta(\T^2))$ for $\beta>0$, which would correspond to the case $p=1$.

The above heuristics can be made rigorous for solutions of the advection equation~\eqref{e:intro:advection}. However, for the corresponding solutions of the advection-diffusion equation~\eqref{e:intro:advdiff}, we are not aware of any general method to prove regularity estimates building on the previous observations. In the proof of Theorem \ref{t_main_OC} we develop a new approach to this problem which shows regularity for the stochastic flow uniformly in the stochastic parameter and which strongly relies on the alternating shear-flow structure of the velocity field. Notice that, 
%
%
% Already at the level of the pure advection equation, this heuristic can be expected but needs a nontrivial justification. For our solutions $\theta_\kappa$, we don't know any general method to obtain regularity estimates building on the previous observations. We therefore rely substantially on our shear-flow structure to prove these estimates; in analogy, 
% 
for a genuinely two-dimensional velocity field as in~\cite{ACM-JAMS,YZ}, 
%which is genuinely two dimensional but obeys similar scaling observations 
even though the same scaling as in~\eqref{heuristics_regularity_theta} is satisfied, showing uniform-in-diffusivity regularity estimates seems an extremely hard task. In~\cite{Gautam} only $L^\infty$ bounds on the passive scalar are shown to hold uniformly-in-diffusivity as a consequence of direct bounds on the advection-diffusion equation. However, due to results in~\cite{ACM-loss}, no fractional-regularity bounds due to functional arguments are expected to hold  for the advection-diffusion equation.

We close this section by observing how a suitable choice of the parameters is compatible with the regularity bounds~\eqref{heuristics_regularity_u} and~\eqref{heuristics_regularity_theta} in the full supercritical Obukhov-Corrsin regularity range. 
%We now justify that the previous constraints, in particular the summability in \eqref{heuristics_regularity_u} and \eqref{heuristics_regularity_theta} allow an admissible choice of parameters in the Obukhov-Corrsin range.
Assuming a superexponential growth of the frequencies of the form $\lambda_q = \lambda_{q-1}^{1+\delta}$ (where $\delta>0$ is a small parameter) and that 
%
%, and introducing a new parameter $\gamma >0$ such that 
$T_{q-1} - T_q \sim \lambda_q^{- \gamma}$ for $q \not \in m\N$ (for a suitable paramater $\gamma>0$), we observe that the convergence of both sums in~\eqref{heuristics_regularity_u} and~\eqref{heuristics_regularity_theta} follows from
$$
\begin{cases}
- p + p \alpha (1+\delta)- \gamma + \gamma p <0,
\\
p^{\circ} \beta (1+\delta)  - \gamma <0.
\end{cases}
$$
Letting $\delta \to 0$, this system
% reads
%\begin{align*}
%\begin{cases}
%- p + p \alpha - \gamma + \gamma p <0,
%\\
%p^{\circ} \beta + p^\circ \gamma - \gamma <0,
%\end{cases}
%\end{align*}and 
admits a solution for $\gamma$ provided $\sfrac{1}{p} + \sfrac{2}{p^\circ} =1$ and $\alpha + 2 \beta <1$, that is, in the full supercritical Obukhov-Corrsin regularity range.
We also observe that the choices~$\kappa_q = \lambda_q^{-2}$ and~$\tilde{\kappa}_q = \lambda_q^{-2 + \frac{\gamma}{1+ \delta}}$ guarantee the constraints listed in Section~\ref{ss:eur:dissipative} and Section~\ref{ss:eur-dissip} (at least, up to small modifications, for instance in~\eqref{eur:ito-tanaka} 
%of Theorem~\ref{t_mainadvection} 
we need to require~$\delta \geq \sfrac{\gamma}{2}$).

\section{Notations and preliminaries}\label{section:preliminaries}

We mostly employ standard general notation. We will work on the 2-dimensional torus $\T^n \cong \R^n / \Z^n \cong [0,1]^n / \sim$. The only specific notation is the one for the $\varepsilon$-restriction of a set $A \subset \T^n $, defined for any $\varepsilon>0$ as
$$
A[\varepsilon] := \{ x \in \T^n:   \dist (x, A^c ) > \varepsilon \}\,.
$$

\medskip

In all the paper we make systematic use of several notions and results concerning deterministic and stochastic flows, and their relation to advection and advection-diffusion equations. We summarize the main definitions and results in the remaining of this section without any attempt to be complete or systematic. The deterministic theory for smooth velocity fields is fully classical, while for the stochastic theory we refer to~\cite{Evans_SDEbook,OB,K84} and also to~\cite{LL19} for the case of nonsmooth velocity fields. 

\medskip

Given a smooth velocity field $u: (0,T) \times \T^n \to \R^n$ we denote by $\XX : (0,T) \times \T^n \to \T^n$ its flow, that is, the solution of the ordinary differential equation (ODE)
\begin{align} \label{d:ODE}
\begin{cases}
\dot{\XX}_{t} = u(t, \XX_{t}) 
\\
\XX_{0}(x_0) = x_0
\end{cases} 
\end{align}
parametrized by the initial datum $x_0 \in \T^n$. We often use the shorthand notation  $ \XX_t  =\XX(t, \cdot)$. The unique solution $\vartheta$ of the advection equation~\eqref{e:intro:advection} with initial datum $\vartheta_{\initial}$ is characterized for any $t\geq0$ by
\begin{align} \label{eq:solutiontoPDE}
\vartheta (t, \cdot) = ( \XX_t )_\sharp (\vartheta_{\initial}),
\end{align} 
where $(\XX_t )_\sharp (\vartheta_{\initial})$ denotes the push-forward measure of $\vartheta_{\initial} \mathcal{L}^d$ through the map $\XX_t$, defined as
$$
(\XX_t )_\sharp (\vartheta_{\initial}) (A) = \int_{\T^n} \mathbbm{1}_{A} (\XX_t (x)) \vartheta_{\initial}(x) dx
\qquad \text{for all Borel sets $A \subset \T^n$.}
$$
We denote by~$\XX_{t,s}$ the flow when the initial condition is assigned at the time $s$, in particular~$\XX_{t}=\XX_{t,0}$.

\medskip

Let $(\Omega, (\mathcal{F}_t)_t, \mathbb{P})$ be a filtered probability space and $\WW_t$ an adapted $\T^n$-valued Brownian motion. We will make an extensive use of the following estimate for the Brownian motion: for every $c,\kappa>0, T> \tilde T \geq0$
\begin{equation}\label{stima:brownian}
\mathbb{P} \left ( \omega \in \Omega : \sup_{t \in [\tilde{T} , T]}  \sqrt{2 \kappa} | \WW_t - \WW_{\tilde T}| \leq c \right ) \geq 1-  2 e^{ \sfrac{-c^2}{2 \kappa (T-\tilde T)}}.
\end{equation}
A stochastic flow is a stochastic process, parametrized by $x_0$, which is a solution of the stochastic differential equation (SDE)
\begin{align} \label{d:sdee}
\begin{cases}
d \XX_{t}^\kappa =   u(t, \XX_{t}^\kappa) dt + \sqrt{2 \kappa}  {d} \WW_{t} 
\\
\XX_{0}^\kappa = x_0.
\end{cases}
\end{align}
For all times $t_1,t_2\geq 0$ the stochastic flow $\XX_t^\kappa$ satisfies the semi-group property $\XX_{t_1+t_2,0}^\kappa = \XX_{t_1+t_2, t_1}^\kappa (\XX_{t_1,0}^\kappa)$. 
If the velocity field is divergence-free, then the stochastic flow is measure-preserving 
\begin{equation}\label{d:sLf:property}
\int_{\T^n}  \mathbbm{1}_{A} (\XX_t^\kappa (x, \omega)) dx = \mathcal{L}^d (A)
\qquad \text{ for all $t \geq 0$ and for all Borel sets $A \subset \T^n$, \ for $\mathbb{P}$-a.e. $\omega \in \Omega$}
\end{equation}
%where we use the notation $\mathbb{E}[g]$ to denote the average of $g \in L^1(\Omega)$ with respect to the probability measure~$\mathbb{P}$.

The fundamental It\^o formula asserts that, for every $f \in C^{\infty}((0, T) \times \T^n)$, the stochastic process~$t \mapsto f(\XX^{\kappa}_t)$ is adapted to the filtration $\mathcal{F}_t$ and solves the SDE
\begin{equation}\label{eq:ito}
df(t,\XX^\kappa_t) = \partial_t f (t, \XX^\kappa_t) \ dt + \nabla f (t, \XX^\kappa_t) \cdot  d \XX^\kappa_t +  \kappa \Delta f (t, \XX^\kappa_t)  \ dt \,.
\end{equation}
Moreover, %the stochastic process $t \to \int_{0}^t \nabla f(s, \XX^\kappa_s) \cdot  d\WW_s $ is a martingale and its quadratic variation is characterized by the following formula
the so-called It\^o isometry (see \cite[Corollary 3.1.7]{OB}) gives for any $t\in [0,T]$
\begin{equation}\label{quadratic_variation}
\mathbb{E} \left [  \left| \int_0^t \nabla f(s, \XX^\kappa_s) \cdot  d\WW_s \right|^2 \right ]  = \mathbb{E} \left [ \int_0^t | \nabla f (s, \XX^\kappa_s)  |^2 ds \right ] \,.
\end{equation}  
%{\color{red} Precise reference (exercise on Oksendal?) for the above? It is the first ``nonstandard'' result.} 
 
\medskip

Compared to the case of deterministic flows, for stochastic flows it is slightly more technical to define a backward flow. We observe that, if $\WW_t$ is a Brownian motion, then $t \in [s, \infty) \mapsto  \WW_t - \WW_s $ is a Brownian motion as well.
We can define the backward filtration generated by the Brownian motion, namely for any~$s \leq t$ we consider the  $\sigma$-algebra $\mathcal{F}_{s,t} = \sigma (\WW_r - \WW_t : s \leq r \leq   t)$. The backward SDE starting at time~$t$~is
 \begin{equation}\label{backward_flow}
 \begin{cases}
 d \XX_{s,t}^\kappa =  u(t, \XX_{s,t}^\kappa) dt + \sqrt{2 \kappa}  d \WW_{s} 
\\
\XX_{t,t}^\kappa = x_0\,.
 \end{cases}
 \end{equation}
 %{\color{red} how precise do we want to be with $\hat{d}$ vs $d$?}
A continuous stochastic process $ s  \in [0,t] \mapsto \XX_{s,t}^\kappa$ is a solution of the backward SDE \eqref{backward_flow} starting at time $t$ if it is adapted to the backward filtration $(\mathcal{F}_{s,t})_s$ and satisfies
$$\XX^\kappa_{s,t} (x_0, \omega) = x_0 + \int_t^s u(\tau, \XX^\kappa_{\tau,t}(x_0, \omega)) d \tau + \sqrt{2 \kappa} (\WW_s (\omega) - \WW_t (\omega)) \qquad \mbox{for } \mathbb{P} \mbox{-a.e.}~\omega, \, x_0 \in \T^n \,.$$
 We say that the map $\XX^\kappa_{\cdot ,t} : [0,t] \times \T^n \times \Omega \to \T^n$ is a backward stochastic flow.

\medskip

We also observe for later use that, by means of the classical Gr\"onwall inequality, it is possible to estimate the distance between the deterministic and the stochastic flows associated to the same velocity field as
\begin{equation}\label{l:gronwall}
| \XX_{t} (x) - \XX_t^\kappa (y, \omega)| \leq \left ( |x- y| + \sqrt{2 \kappa} \sup_{s \in [0,t]} | \WW_s(\omega)| \right) 
\exp \left(\int_0^t \| \nabla u (s, \cdot) \|_{L^\infty (\T^n)}  ds \right)
%\quad
%\text{}
\end{equation}
for $\mathbb{P}$-a.e. $\omega$ and for every $x,y\in \T^n$.

\medskip

The Feynman-Kac formula provides a representation of the unique bounded solution of the advection-diffusion equation~\eqref{e:intro:advdiff} via the backward stochastic flow:
\begin{equation}\label{feynman_kac_real}
\vartheta_\kappa (t,x) = \mathbb{E}[ \vartheta_{\initial} (\XX^{\kappa}_{0,t}(x ))] 
\qquad \text{for every $t\geq 0$, for a.e. $ x\in \T^n$.}
\end{equation} 
A formula involving the push-forward via the forward stochastic flow (analogous to~\eqref{eq:solutiontoPDE} for the deterministic flow) holds as well: for any $t
\geq 0$
\begin{equation} \label{feynman_kac}
 \int_{\T^n} f(x)  \vartheta_{\kappa} (t,x) \, dx =  \mathbb{E} \int_{\T^n} f(\XX_t^\kappa (x, \cdot ))   \vartheta_{\initial} (x) \, dx \qquad \text{for any $f \in L^\infty(\T^n)$.} 
\end{equation}

\medskip

All of the above results are quite classical in the case of smooth velocity fields. A systematic theory in the case of Sobolev velocity fields is presented in~\cite{LL19}. However, the velocity field in our construction will not have such a regularity across the singular time $t=1$. For a velocity field $u \in L^p ((0,T); L^q(\T^n))$ under the Lady\v zhenskaya--Prodi--Serrin condition $\sfrac{2}{p} + \sfrac{3}{q} <1$ it is proven in~\cite{KR05} that a unique stochastic flow exists. To the best of our knowledge, it has not been explicitly shown in the literature that the measure-preserving property~\eqref{d:sLf:property} holds in case the velocity field additionally is divergence-free. We sketch a proof of this fact in a case adapted to our context in the following theorem.
{
\begin{theorem} \label{prop:representation}
Fix $\kappa >0$ and $\alpha \in (0,1)$. Let $u \in  C^\alpha ((0,T)   \times \T^n) $ be a divergence-free velocity field
and $\vartheta_{in } \in  L^\infty (\T^n)$. Then, the unique bounded solution $\vartheta_\kappa$ to \eqref{e:intro:advdiff} is represented by formula \eqref{feynman_kac} and the unique stochastic flow satisfies the measure-preserving property~\eqref{d:sLf:property}. 
\end{theorem}

\begin{proof}
The proof is classical for a smooth divergence-free velocity field and the general result follows from the approximation theorem \cite[Theorem 5]{FGP}.
%The proof of this statement follows from the following two facts: the approximation theorem \cite[Theorem 5]{FGP} and the fact that for any velocity field $u_n \in C^\infty$ divergence-free the stochastic flow map is unique, smooth and $\mathbb{P}$-a.s. measure preserving. If the velocity field is smooth is straightforward to check that the unique solution to \eqref{e:intro:advdiff} satisfies \eqref{feynman_kac} and thanks to the approximation theorem we conclude the proof.
\end{proof}}

\section{Construction and main properties of the velocity field}\label{section:parameters}

In this section we introduce all parameters needed in our constructions. Based on the choice of the parameters, we describe the construction of the velocity field in Theorems~\ref{t_main_OC},~\ref{t_mainadvection}, and~\ref{t_main_convol} and the action of the corresponding flow on the solutions. We also collect several useful properties of the velocity field. 

\subsection{Choice of the parameters}\label{ss:para}
%Parameters for Theorem \ref{t_main_OC}, Theorem \ref{t_mainadvection} and Theorem \ref{t_main_convol} } \label{section:choiceparameters}
 
% 
%\subsubsection{Choice of parameters} 

%{ \color{red} sostituire $\gamma = 4 \delta$ e $\epsilon= 2 \delta$ }

Let $\sfrac{1}{p} + \sfrac{2}{p^\circ} = 1$ and $\alpha + 2 \beta <1$ be as in Theorem~\ref{t_main_OC}. 
Since $p^\circ \leq 4$ and $\beta < \sfrac{1}{2}$, we have $p^\circ \beta < 2$.
We consider parameters $\epsilon , \delta \in (0,\sfrac14)$ sufficiently small such that 
\begin{subequations} \label{c:beta_eps_condition_all}
\begin{align} \label{c:alpha_beta_eps_kappa}
1 - \frac{2 \beta (1 + 3 \epsilon(1+ \delta) ) (1+ \delta)}{1 -  \delta} - \alpha (1 + \epsilon \delta)(1 + \delta)  - \frac{\delta}{8 }>0\,,
\\
\frac{p^\circ \beta (1 + 3\epsilon (1 + \delta)) (1 + \delta )}{1 -  \delta} + \frac{\delta}{8} < 2 \,, \label{c:gamma_eps}
\\
\epsilon \leq \frac{\delta^3}{ 50}\,.  \label{c:eps_delta}
\end{align} 
\end{subequations}
The first two conditions are satisfied if $\epsilon$ and $\delta$ are small thanks to the assumptions $\alpha + 2 \beta <1$ and $p^\circ \beta < 2$ respectively, while for the third condition it is enough to choose $\epsilon$ depending on $\delta$.

Given $a_0 \in (0,1)$ such that 
 %{\color{red} to be checked the condition on $d_0$!!}
 \begin{align} \label{c:d_0}
a_0^{{\epsilon \delta^2} } + a_0^{\sfrac{\epsilon \delta}{8} } \leq \frac{1}{20} \,,
 \end{align}
we define
\begin{align}    \label{d:parameters}
  a_{q+1} = a_q^{1 + \delta}, \qquad
\lambda_q =  \frac{1}{2 a_q}.
\end{align}
To be precise, our construction requires also that  $\sfrac{a_q}{a_{q+1}} $ is a multiple of $4$ for every $q$ to preserve the geometry of chessboards, hence the superexponential sequence should be chosen so that $a_{q}/a_{q+1}$ is an integer multiple of $4$ in the range $ [a_q^{- \delta}-4, a_q^{- \delta}]$. This change affects the proofs only in numerical constants in the estimates and in turn makes the reading more technical, hence we avoid it.

%this ensures only that 
%$a_q+1 \tilde d$}
%and due to the superexponential decay of the sequence we can assume that $\sfrac{a_q}{a_{q+1}} $ is a multiple of $4$ for every $q$. 
%{\color{red} motivare $a_q = M a_{q+1}$ for $M$ being an even number}
%Q: can we fix $d_0$ or $d$ already here? Or will $d$ be modified later? Should we also note that $a_{q+1}=2^\delta a_q^{1+\delta}$?}
%{\color{red} 1) si, $d_0$ possiamo fissarlo qui ed \`e fissato con la condizione $d_0^{\epsilon \delta /8} \geq 20$, possiamo anche evitare di parlare di $d$, ma da flessibilita in futuro se dovesse servire per assorbire costanti. 
%2) vedi risposta precedente. All inizio pensavo di scriverla cosi in modo da modificare d ma alla fine ho fatto la scelta giusta su $d_0$ una votla per tutte
%3) Questa \`e una bella seccatura perch\'e uso spesso $a_{q+1}= a_q^{1+ \delta}$. A questo punto darei questa formula e ricaverei $\lambda_q = a_q^{-1}/2$, tanto le stime le faccio sempre su $a_q$, l'unica volta in cui uso lambda \`e nell ultima sezione che a meno di costante torna uguale... in ogni caso le costanti le ricontrollo}
Notice that for any $\ell \geq \epsilon \delta $ we have 
\begin{align*}
 \sum_{k \geq q} a_{k}^{\ell}  = \sum_{k \geq 0} a_{ q}^{(1+ \delta)^k \ell} \leq \sum_{k \geq 0} a_{ q}^{(1+ k \delta) \ell}  \leq  2 a_q^\ell ,
 \end{align*}
 that we will use throughout the proofs.
 % and the construction for $\delta$ chosen as above and $\ell$ fixed in terms only of the chosen parameters $\alpha, \beta, \epsilon, \delta, p, p^\circ$.
 
 %Notice that for any $\ell \geq  \epsilon $ we have 
 %\begin{align*}
 %\sum_{k \geq q} a_{k}^{\ell}  = \sum_{k \geq 0} a_{ q}^{(1+ \delta)^k \ell} \leq \sum_{k \geq 0} a_{ q}^{(1+ k \delta) \ell}  \leq  2 a_q^\ell ,
 %\end{align*}

 %For any $q \in \N$, we suppose $\lambda_q^\delta \in 2 \N$ (up to redefine $\lambda_{q+1}= \lambda_q M$, where $M$ is the smaller natural even number such that $M \geq \lambda_q^\delta$).

We fix the parameter $\gamma>0$ for the scaling in time by
\begin{align} \label{d:gamma}
\gamma = \frac{p^\circ \beta (1 + 3\epsilon (1 + \delta)) (1 + \delta ) }{1 - \delta} + \frac{ \delta}{8} \,.
\end{align}
We fix $m \in \N $ such that $m-1 \geq \sfrac{16}{\delta^2}$ and define the sequence of times
\begin{align*} \label{d:parameters_Tq}
\begin{cases}
t_q = a_q^{ \gamma} & \mbox{for any $q\in\N$}
\\
\overline{t}_q  = a_q^{\gamma -  \gamma \delta}  &  \mbox{for $q$ such that } q \in m\N 
\\
\overline{t}_q = 0 &  \mbox{for $q$ such that } q \not\in m\N 
 \end{cases}
 \end{align*}
 recalling that by $q \in m \N$ we mean that $q$ is a multiple of $m$. We also
 set
\begin{equation}\label{d:parameters_Tq}
T_q = \sum_{j \geq q}^\infty  \overline{t}_j  + 3 \sum_{j \geq q}^\infty  t_j < 1 \quad \mbox{ for any } q \in \N \,.
\end{equation}

The remaining choices of the diffusivity and convolution parameters are specific to each of our three theorems.
%\begin{itemize}
%\item 
In Theorem~\ref{t_main_OC} we fix the diffusivity parameter
\begin{equation}\label{d:AD_parameters_kappa_q}
\tilde{\kappa}_q = a_q^{2 - \frac{\gamma}{1 + \delta} + 4 \epsilon}
\end{equation}
and observe that this entails  for any $ j \leq q$ 
\begin{subequations}\label{d:AD_parameters_kappa_q_estimates}
\begin{align}
\sqrt{\tilde \kappa_q \, a_{j-1}^{ \gamma }} \leq  a_j^{1 + 2 \epsilon}   \,, \label{d:AD_first}
\\
\sqrt{\tilde \kappa_q \, a_q^{ \gamma -  \gamma \delta}} \geq a_q^{ 1-  \frac{\epsilon}{2}} \,, \label{d:AD_second}
\end{align}
\end{subequations}
where the second condition holds thanks to \eqref{c:eps_delta}.
Notice that a necessary condition for $\tilde \kappa _q \to 0$ is that~$\gamma \sim  p^\circ \beta <2$. This condition is implied by $p \geq 2$, but in fact Theorem~\ref{t_main_OC} holds more in general replacing the assumption~$p \geq 2$ by $p^\circ \beta <2$.
In Theorem~\ref{t_mainadvection} and Theorem~\ref{t_main_convol} we set $\beta=0$ and for the remaining parameters we implement the corresponding choices as described above. The convolution parameter in Theorem~\ref{t_main_convol} is set to be
\begin{equation} \label{parameters:choice_mollification}
\eps_q = a_q^{1 + \gamma}\,.
\end{equation} 
In Theorem~\ref{t_mainadvection} we need to consider two diffusivity parameters. The parameter for the convergence to a solution which dissipates the $L^2$ norm is the same as in~\eqref{d:AD_parameters_kappa_q}, while the parameter for the convergence to a  solution which conserves the $L^2 $ norm is
\begin{equation}  \label{parameters:choice_conservative}
\kappa_q = a_q^{2 + 3 \epsilon}\,.
\end{equation}

%{ \color{red} The property I need in the scheme is that $a_{q+1}^{2} a_{q}^{- \alpha/2 - \gamma} \ll \kappa_q \ll  a_{q}^{2 + 2 \gamma}$ and the property I need for the regularity is that $1 - \gamma - (1+ \beta) \alpha >0$}

 %Moreover, $\kappa_q \leq \gamma_q$ because $\alpha > \frac{3}{4}$ and $\kappa_q a_{2q}^{1/4} \geq a_{2q+1}$ because $3 \beta + \beta (1 + 4 \alpha ) < \frac{3}{4}$.
 
 \begin{comment}
 {\color{blue}
 \subsubsection{Useful remarks} \label{remark:useful}
 Throughout the proof we will use the following useful property on the sequence we defined above.
% { \color{red} it is straightforward to conclude that all the properties   \eqref{c:parameters1} and  \eqref{c:parameters} are satisfied + aggiungi. Controllare la sommabilita a 1 della serie $a_q^\gamma$ oppure allargare l'insieme dei tempi e dire che $a_q^{- \gamma} \geq 2q$}
% { \color{red} cambiare $a_q^{\gamma}$ con $a_q^{\gamma/4}$ }
% \begin{itemize}
 %\item  %Up to enlarge $d \in \N$, depending on $\delta$ and $\gamma$, we can suppose that
 Let $\ell >0$ then we have
 $$ \sum_{k \geq 0} a_{k}^{\ell} \leq \frac{a_0^\ell}{1- a_0^{\delta \ell}} $$
 in particular we have that
 $$ 6 \sum_{k \geq 0} t_k + \sum_{k \geq } \overline{t}_k \leq 2 a_0^{\gamma \delta} \leq 1,$$
 indeed
 \begin{align*}
 \sum_{k \geq q} a_{k}^{\ell} = \sum_{k \geq 0} a_{k + q}^{\ell} = \sum_{k \geq 0} a_{ q}^{(1+ \delta)^k \ell} \leq \sum_{k \geq 0} a_{ q}^{(1+ k \delta) \ell} = a_{q}^{\ell}  \sum_{k \geq 0} a_{ q}^{ k \delta \ell} \leq  \frac{a_q^\ell}{1- a_q^{\delta \ell}}.
 \end{align*}
% \item {\color{red} controlla se serve ancora}Up to enlarge $d \in \N$ we can suppose $\frac{ a_{q}^{- \gamma}}{2} \geq 2 q $, indeed for $q=0$ is an easy condition on $d \in \N$ and for any $q \geq 1$
 %\begin{align*}
% a_q^{- \gamma} = a_0^{\frac{- \gamma (1+ \delta)^q}{4}} \geq a_0^{- q \gamma   \delta/4 } \geq 4^q,
 %\end{align*}
% where the last inequality holds if $d$ is such that $(2 d)^{\gamma \delta/4} \geq 4$.
  %where the last inequality holds for any $q \in \N$ sufficiently large and up to  enlarge the parameter $d$ in terms of $\gamma$, for instance asking that $d^{\gamma} \geq 2$.
% \end{itemize}
 Q: is this remark just to guarantee that $T_q$ produce a subset of $(0,2)$? If so, I would put the remark (in a shorter way) right after the def of $T_q$. Moreover, some 6 remained around. 
 }
\end{comment}

\subsection{Construction of the velocity field for $0\leq t\leq1$} \label{section:vectorfield}
% \subsection{Construction of the velocity field for Theorems \ref{t_main_OC}, \ref{t_mainadvection}  and  \ref{t_main_convol} up to time $t = 1$} \label{section:vectorfield}
%{ \color{red} ho fatto un replace all q,3 in q,2 e poi q,5 in q,3 }
We begin by defining chessboard functions and chessboard sets, also considering a notion of parity on the chessboards. 
%
%First we give the definition of chessboard, defining also a parity on that.
%

% 
% Our goal is to define a divergence-free velocity field $u \in C((0,2) \times \T^2) $ whose main  properties   are summarized in Section \ref{section:vector_geometric} and Lemma \ref{l_1}. 
%
%
%
%

%\begin{comment}
%We need building blocks with the following properties
%\begin{itemize}
%\item A periodized (and bit modified) version  of $\mathbb{W}_1$ and $\mathbb{W}_2$  transform a chessboard of length $a_q$ to a chessboard of length $a_{q+1}$,
%\item A rescaled version of $\mathbb{W}_1$ transform an even chessboard of length $a_q$ to an odd chessboard of length $a_q$ ``acting at frequency'' $\lambda_{q+1} \gg \lambda_q$,
%\end{itemize}
%where we define
%\end{comment}
%

 The  building blocks $\mathbb{W}$, $\mathbb{\widetilde{W}}$, $\mathbb{\overline{W}} : \T^2\to\R^2$ for the construction of the velocity field are shear flows defined by
$$
\mathbb{ W}(x_1, x_2) = (W(x_2), 0)\,, \quad
\mathbb{\widetilde W} (x_1, x_2)= \Big(0, \frac{1+W (x_1 )}{2}\Big) \,, \quad
\mathbb{\overline W} (x_1, x_2)= \Big(0,\frac{1-W (x_1 )}{2}\Big)\,,
$$ 
where the function $W: \T \to \R$ is defined as follows
$$
W(z)= 
 \begin{cases}
1 & \text{ if } z \in [0,1/2),
 \\
  -1 & \text{ if } z \in [1/2, 1),
 \end{cases}
%\qquad
% \widetilde{W}(z)= 
% \begin{cases}
%1 & \text{ if } z \in [0,1/2),
% \\
% 0 & \text{ if } z \in [1/2, 1),
% \end{cases}
%\qquad
% \overline{W}(z)= 
% \begin{cases}
%0 & \text{ if } z \in [0,1/2),
% \\
% 1 & \text{ if } z \in [1/2, 1)
% \end{cases}
$$
(and extended by periodicity).
%We periodise the functions $W$, namely $W(z + k) = W(z)$ for any $k \in \N$, $z \in [0,1)$, as well as $\tilde W, \overline{W}$. 

%We fix a convolution kernel  in space to make our velocity field locally smooth outside $t=1$. 

%\begin{comment}
%We define the $q$-th grid as follow. 
%Let $R_0 = R_h \cup R_v$, where $R_h$ is an horizontal grid and $R_v$ is a vertical grid, namely  $R_h = \{ (x_1, x_2): x_2 = 1/2 \}$ and  $R_v = \{ (x_1, x_2): x_1 = 1/2 \}$. We want to define also the periodized of these sets, so we define $g_h (x) = \mathbbm{1}_{R_h}(x)$ and $g_v(x) = \mathbbm{1}_{R_v}(x)$, $g_{h, \lambda} (x)= g_{h}(\lambda x)$ and $g_{v, \lambda} (x)= g_{v}(\lambda x)$ and finally 
%\begin{subequations} \label{d:G_sets}
%\begin{align} 
%R_{q}^{v} = \supp{(g_{v, \lambda_q})} \ \ \ \ \ \ 
%\\
%R_{q}^{h} = \supp{(g_{h, \lambda_q})} \ \ \ \ \ \ 
%\\
%R_q = R_q^v \cup R_q^h \ \ \ \ \ \  
%\end{align}
%\end{subequations}
%for any $q \in \N$, where here we mean the theoretic support. { \color{red} aggiungi figura, griglia $G_q$. Basta dire che è $S_q \cup B_q$ non serve defire $R_q$..}
%\end{comment}
Recalling the definition of the $\{T_q\}_q$ in~\eqref{d:parameters_Tq}  we define
% { \color{red} cambia $\mathcal{I}_{q,0}$ sulla successione pari che ci permetterà l'anomalous dissipation}
  \begin{subequations} \label{d:timeintervals}
  \begin{align} 
  \mathcal{I}_{-1} = (0,1-T_0],
  \\
   \mathcal{I}_{q, 0} =   (1-T_q ,1-T_q + \overline{t}_q],
   \\
   \mathcal{I}_{q, i} = (1-T_q + \overline{t}_q + (i-1) t_q,1-T_q  + \overline{t}_q +i t_q],
  \end{align}
  \end{subequations}
  for any $q \in \N$ and $i=1,2,3$
  and we define also ${\mathcal{I}}_q = \cup_{i=0}^3 \mathcal{I}_{q,i}$. %{\color{blue} do we need the tilde? I don't think ${\mathcal{I}}_q$ is taken for something else} 
%{\color{red} Fatto: ricorda di ricontrollarlo in lettura, anche se dovrebbe essere gia sistemato su tutto il file..Ho sostituito: prima era $\tilde{I}_{q}$ ora \`e $\mathcal{I}_q$ e  aggiunto $\mathcal{I}_{-1}$}  
  Similarly we define the reflected intervals
$$
\mathcal{J}_{-1} = [1+T_0, 2) 
\qquad\text{ and }\qquad
\mathcal{J}_{q, i} = 2 - \mathcal{I}_{q,i}
$$  
for any $q \in \N$ and for $i=0,1,2,3$ and ${\mathcal{J}}_q = \cup_{i=0}^3 \mathcal{J}_{q,i}$. We notice that 
$$
\bigcup_{q=0}^\infty \bigcup_{i=0}^3 \mathcal{I}_{q, i} \cup \mathcal{J}_{q, i} \cup \mathcal{I}_{-1} \cup \mathcal{J}_{-1} = (0,2) \setminus \{ 1 \} \,.
$$

We now define the velocity field on the time intervals $\mathcal{I}_{q, i}$% and $\mathcal{J}_{q, i}$
. First of all, we set
$$
u(t, \cdot) \equiv 0 
\qquad \text{for $t \in \mathcal{I}_{-1}% \cup \mathcal{J}_{-1}
$}
$$
and
$$
u(t, \cdot) \equiv 0 
\qquad \text{for $t \in \mathcal{I}_{q,0} \cup \mathcal{I}_{q,1}
$ and for any $q \in \N$.}
$$

Fix a convolution kernel $\tilde \psi \in C^\infty_c (-2,2)$ which we assume to be bounded by $1$ and with gradient bounded by $1$. For any $q \in \N$ we define the rescaled kernels
\begin{equation}\label{e:psiscaled}
\psi_q (x_1 , x_2) = \lambda_q^{2 + 2 \epsilon \delta} \tilde \psi( \lambda_q^{1 + \epsilon \delta} x_1) \tilde \psi( \lambda_q^{1 + \epsilon \delta} x_2) \,.
\end{equation}

We further define suitable cut-off functions in time. Let $\eta_{q,2} \in C^{\infty}_c ( \mathcal{I}_{q,2}[\sfrac{a_q^{\gamma}}{6}])$ and $\eta_{q,3} \in C^{\infty}_c ( \mathcal{I}_{q,3}[\sfrac{a_q^{\gamma}}{6}])$ 
be nonnegative functions such that $\int_{\mathcal{I}_{q,2}} \eta_{q,2} = \int_{\mathcal{I}_{q,3}} \eta_{q,3} = \sfrac{a_q^\gamma}{2}$ and there exist constants $C_k >0$ (depending only on $k \in \N$) such that  
 \begin{align} \label{d:convolution_time}
\| \eta_{q,i} \|_{C^k} \leq C_k a_q^{- k \gamma} 
\qquad \text{for $i=2,3$ and $k \in \N$.},
 \end{align}
 where we can fix $C_0 =1$.
 %{\color{red} replace $m$ by $k$? $m$ is now taken already...} 
% We notice that 
%$$  \bigcup_{q=0}^\infty \bigcup_{i=0}^3 \mathcal{I}_{q, i} \cup \mathcal{J}_{q, i} \cup \mathcal{I}_{0} \cup \mathcal{J}_{0} = (0,2) \setminus \{ 1 \}$$
% %and also that $[1- T_q, 1- T_{q+1}) = \bigcup_{i=0}^6 [ 1-T_q +i  a_q, 1- T_q + (i+1) a_q)$ and   $[1+ T_{q+1}, 1+ T_{q}) = \bigcup_{i=0}^6 [ 1+ T_{q+1} +i  a_q, 1+T_{q+1} + (i+1) a_q)$ 
%  then it is enough to define $u$ on such open disjointed time intervals.
%  

  \begin{figure}[h]
  \centering
  \includegraphics[width=11cm]{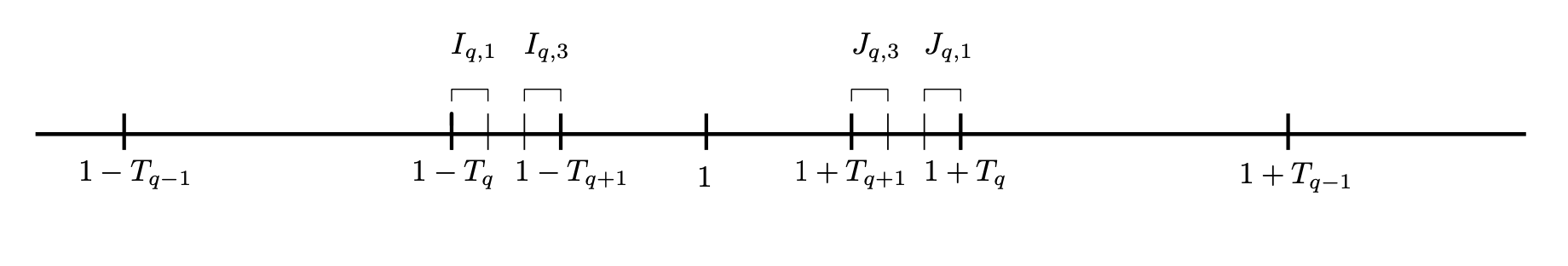}
  \caption{The time intervals $\mathcal{I}_q$ and $\mathcal{J}_q$ for $q \not\in m\N$. 
  }
  \end{figure}
  \begin{figure}[h]
  \centering
  \includegraphics[width=11cm]{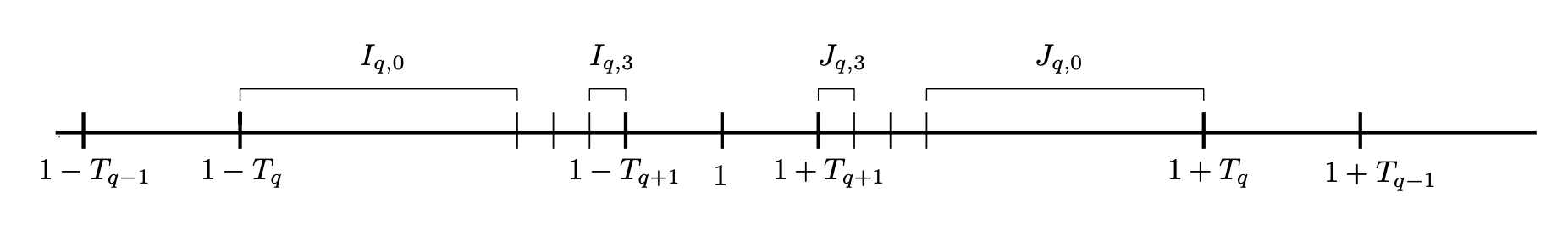}
  \caption{The time intervals $\mathcal{I}_q$ and $\mathcal{J}_q$ for $q \in m\N$. }
  \end{figure}
 % For $t \in \mathcal{I}_{0}$, we define  $u (t,x) \equiv 0$;
%  Let us fix a non negative function $\eta_{q,2} \in C^{\infty}_c ( I_{q,2}[a_q^{\gamma}/6])$ such that $\int_{\mathcal{I}_{q,2}} \eta_{q,2} = \frac{a_q^\gamma}{2}$ which satisfies $\| \eta_{q,2} \|_{C^m} \leq a_q^{- m \gamma}$. 

Let us denote by $\lfloor \xi \rfloor $ the largest integer smaller or equal than the real number $\xi$. Given the discontinuous shear flow
%We firstly introduce a $BV$ shear flow time independent, namely
 \begin{align} \label{d:vectorfield_3} 
 w_{q+1,2}(x)=
 \begin{cases}
a_q^{1- \gamma} \mathbb{W}(\lambda_{q+1} x) &  \text{ for $x=(x_1,x_2)$ with $\left \lfloor \sfrac{x_2}{a_q} \right\rfloor $ even,}
\vspace{0.1cm}
 \\
-  {a_q^{1- \gamma}} \mathbb{W} (\lambda_{q+1} x) & \text{ for $x=(x_1,x_2)$ with $\left \lfloor \sfrac{x_2}{a_q} \right \rfloor$ odd,}
  \end{cases}
 \end{align}
we set
$$
u(t,x) = \eta_{q,2} (t) w_{q+1,2} \star \psi_{q+1} (x)
\qquad \text{for $x \in \T^2$ and $t \in \mathcal{I}_{q,2}$.}
$$
Analogously, given the discontinuous shear flow
\begin{align*} w_{q+1,3}(x)=
 \begin{cases}
 2 a_{q+1} a_q^{-\gamma} \mathbb{\widetilde W}(\lambda_{q+1} x) &  \text{ for $x=(x_1,x_2)$ with $\left \lfloor \sfrac{x_1}{a_q} + \sfrac{1}{2} \right \rfloor$ even,}
  \vspace{0.1cm}
 \\
2 a_{q+1} a_q^{-\gamma} \mathbb{\overline{W}} (\lambda_{q+1} x) & \text{ for $x=(x_1,x_2)$ with $\left \lfloor \sfrac{x_1}{a_q} + \sfrac{1}{2} \right \rfloor$ odd,}
  \end{cases}
 \end{align*}
we set 
$$
u(t,x ) = \eta_{q,3} (t) w_{q+1, 3} \star \psi_{q+1}(x)
\qquad \text{for $x \in \T^2$ and $t \in \mathcal{I}_{q,3}$.}
$$
 We highlight that the convolution is taken at a spatial space scale comparable to  $a_{q+1}^{1+ \epsilon \delta} $ which is much smaller than the scale $a_{q+1}$ of the velocity field. Therefore, the structure of the velocity field is preserved up to a very small error.

\begin{figure}[h]
\includegraphics[width=8cm]{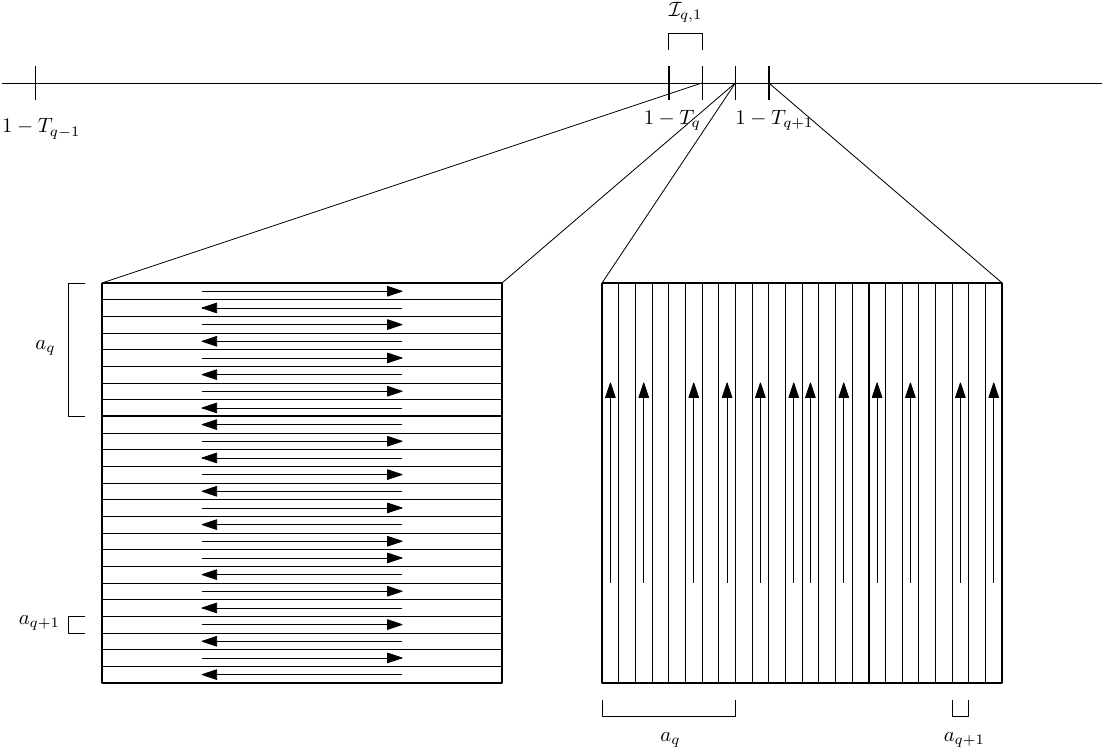}
\caption{The velocity fields $w_{q+1,2}$ and $w_{q+1,3}$  on the time intervals  $\mathcal{I}_{q,2}$ and $\mathcal{I}_{q,3}$ respectively. In the picture we choose $\sfrac{a_q}{a_{q+1}}=8$. }
\end{figure}

\subsection{Construction of the velocity field for $1 \leq t \leq 2$} In Theorem~\ref{t_main_OC} the velocity field is defined only for $0 \leq t \leq 1$.
%Extension of the velocity field for $1 \leq t \leq 2$ for Theorem \ref{t_mainadvection} and Theorem \ref{t_main_convol}}  
For Theorem~\ref{t_mainadvection} we simply extend the velocity field  $u$ by reflection, namely  we set
 \begin{equation}\label{vectorfield_VV}
  u(t,x) = - u(2-t,x)
  \qquad \text{for $t \in [1,2]$.}
\end{equation} 
For Theorem~\ref{t_main_convol} in addition to the reflection we add a new velocity field $u_{\swap}$, namely
\begin{equation}\label{vectorfield_convolution} 
 u(t,x ) =  u_{\swap}(t,x) - u(2-t,x)
   \qquad \text{for $t \in [1,2]$.}
\end{equation}
The swap velocity field $u_{\swap}$ is defined as follows. We first define a discontinuous shear flow
$$
w_{q+1, {\swap}}(x)=  2 a_q^{1- \gamma}
 \mathbb{W}(\lambda_{q+1} x) 
$$
and then set 
$$
u_{\swap} (t,x) = \eta_{q,1}(t) w_{q+1, {\swap}} \star \psi_{q+1} (x)
\qquad \text{for $x \in \T^2$ and $t \in \mathcal{J}_{q,1}$,}
$$
where $\eta_{q,1} \in C^\infty_c (\mathcal{J}_{q,1} [\sfrac{a_q^\gamma}{6}])$ is a nonnegative function with $\int_{\mathcal{J}_{q,1}} \eta_{q,1} = \sfrac{a_q^\gamma}{2}$ and $\| \eta_{q,1} \|_{C^k} \leq C_k a_q^{- k \gamma}$ for any~$k \in \N$, with $C_0 =1$.
%The action of the velocity field $u_{\swap}$ is to swap the parity of the chessboard on every time interval $\mathcal{J}_q$.  
%We will refer to $u_{\swap}$ as the swap velocity field: it is a regularization of the velocity field $w_{q+1, swap}$ whose flow swaps the even chessboard with the odd chessboard of size $a_{q+1}$ in the time interval $\mathcal{J}_{q,1}$.
 %for all $t \in \mathcal{J} \subset \mathcal{J}[\lambda_q^{-2 \gamma}] \subset \mathcal{J}_{q, 1}$ such that $| \mathcal{J}|= \frac{| \mathcal{J}_{q,1}|}{2}$ and $u_{\mathbb{W}} (t,x ) \equiv 0$ for all the other $t \in \tilde{\mathcal{J}}_{q}$ for any $q \in \N$ and also $u_{\mathbb{W}}(t,x) \equiv 0$ for $t \in \mathcal{J}_{0} \cap (0,1).$ As before, since this velocity field is not smooth on this time interval we define $u_{\mathbb{W}} = u_{\mathbb{W}, q} \star_{x,t} \psi_{q+1} \eta_q $ for any $x \in \T^2$ and $t \in  \tilde{\mathcal{J}}_q$.
 \begin{figure}[h]
\includegraphics[width=11cm]{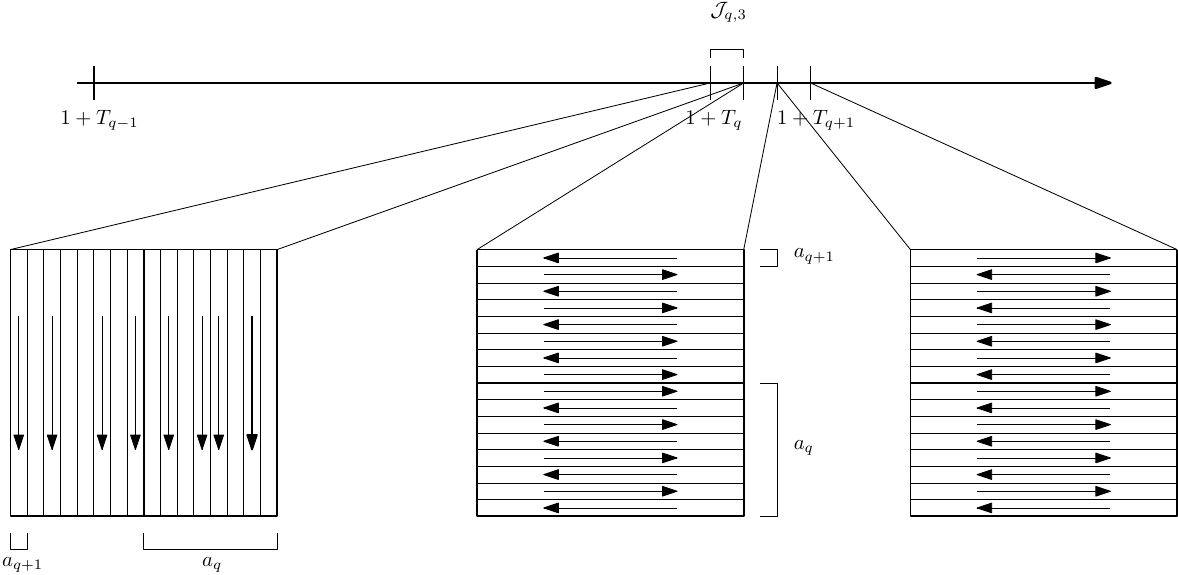}
\caption{
The velocity fields $- w_{q+1,3}$, $- w_{q+1,2}$ and $w_{q+1,{\swap}}$ on the time intervals  $\mathcal{J}_{q,3}$, $\mathcal{J}_{q,2}$, and $ \mathcal{J}_{q,1}$ respectively. In the picture we choose $\sfrac{a_q}{a_{q+1}}=8$.}
\end{figure}

 %\textcolor{red}{da shiftare l insieme A di 1/2 perche se no non viene scacchiera}
%\subsection{Construction of the velocity field for Theorem \ref{t_main_convol}} 
%We define the velocity field $u(t,x)$ as before for $t \in [0,1]$ and for $t >1$ we define  
% $$ u(t,x ) =  u_{\mathbb{W}}(t,x) - u(2-t,x),$$
%where $u_{\mathbb{W}}$ is defined  as follows
% for $t \in \mathcal{J}_{q, i}$ for any $q \in \N$ and $i =0,1,..,6$ as follows
%\begin{align*} 
%u_{\mathbb{W}, q}(t,x)=  2 a_q^{1- \gamma}
% \mathbb{W}(\lambda_{q+1} x) 
% \end{align*}
 %for all $t \in \mathcal{J} \subset \mathcal{J}[\lambda_q^{-2 \gamma}] \subset \mathcal{J}_{q, 1}$ such that $| \mathcal{J}|= \frac{| \mathcal{J}_{q,1}|}{2}$ and $u_{\mathbb{W}} (t,x ) \equiv 0$ for all the other $t \in \tilde{\mathcal{J}}_{q}$ for any $q \in \N$ and also $u_{\mathbb{W}}(t,x) \equiv 0$ for $t \in \mathcal{J}_{0} \cap (0,1).$ As before, since this velocity field is not smooth on this time interval we define $u_{\mathbb{W}} = u_{\mathbb{W}, q} \star_{x,t} \psi_{q+1} \eta_q $ for any $x \in \T^2$ and $t \in  \tilde{\mathcal{J}}_q$.

\subsection{Initial datum} \label{section:initial}

 The initial datum is the same for Theorem \ref{t_main_OC}, Theorem \ref{t_mainadvection} and Theorem \ref{t_main_convol}.
\begin{definition}[Chessboards] \label{d_initial datum}
Let $\lambda \in \N$ and $\vartheta_0 : \T^2 \to \R$ be defined as 
\begin{align} \label{d:vartheta_0}\vartheta_0(x_1, x_2) =
\begin{cases}
1 & \text{ if } x_1, x_2 \in [0,1/2)  \text{ or } x_1, x_2 \in [1/2,1)
\\
-1 & \text{ otherwise}
\end{cases}
\end{align} 
and extended by periodicity. We say that the function $\vartheta^{(1)} (x)= \vartheta_0 ( \lambda x )$ is the {\em even chessboard function} of side $(2\lambda)^{-1}$ and the function $\vartheta^{(2)} (x)= - \vartheta_0 ( \lambda x )$ is the {\em odd chessboard function} of side $(2\lambda)^{-1}$.
We further say that the set $ \widetilde{A}_{\lambda} =  \supp\{ 1+ \vartheta^{(1)} \}$ is the {\em even chessboard set} of side $(2 \lambda)^{-1}$ and $\widetilde{B}_{\lambda} = \supp\{ 1+ \vartheta^{(2)} \}$ is the  {\em odd chessboard set} of side $(2 \lambda)^{-1}$.
%{\color{red} We introduce later notation for $A_0$, $B_0$, $A_q$, $B_q$. Do we want to make notation consistent? Or even drop the notation here above, if we do not use it?} 
%To be precise we say that a function $\vartheta$ is a {\em chessboard} of length $(2\lambda)^{-1}$ if it coincides either with an even or an odd chessboard of length $(2 \lambda)^{-1}$.
\end{definition} 
We consider as initial datum a smoothed version of the even chessboard function of side $a_0$: for~$\vartheta_0\in L^\infty (\T^2)$ defined as in~\eqref{d:vartheta_0} we consider 
\begin{equation} \label{initialdatum}
\vartheta_{\initial} =   \vartheta_0 (\lambda_0 \, \cdot) \ast \psi_0,
\end{equation}
where  $\psi_0 (x) = \lambda_0^{2 + 2 \epsilon \delta} \psi(\lambda_0^{1 + \epsilon \delta } x)$ is the convolution kernel in the spatial variable (used also in Section~\ref{section:vectorfield}).

 \subsection{Main properties of the velocity field}

The action of the velocity field has been informally described in Section~\ref{section:vector_geometric}. Up to small (and quantified) errors, on each time interval ${\mathcal{I}}_q$ (before the critical time $t=1$) the scale of the solution is lowered from $a_q$ to $a_{q+1}$, while on each time interval ${\mathcal{J}}_q$ (after the critical time~$t=1$) the scale of the solution is increased from $a_{q+1}$ to $a_q$. The swap velocity field $u_{\swap}$ (only required in Theorem~\ref{t_main_convol}) swaps the parity of the chessboard on each time interval ${\mathcal{J}}_q$. 

In order to quantify the errors due to the regularizations of the velocity field, we define an $a_q^{1 + \epsilon \delta }$-restricted version of the even/odd chessboard sets of side $a_q$ and a ``good set''. Recalling Definition~\ref{d_initial datum}, we define
%Given $\vartheta_q (x) = \vartheta_0 (\lambda_q x)$ the even chessboard function of side $a_q$ (see Definition \ref{d_initial datum}), we define for $q \in \N$: $\tilde{A}$
%Recalling the even/odd chessboard sets of a certain side, we define an 
%$$ A_q = \{  x \in \T^2 :  \vartheta_q \ast \psi_q  = 1 \}\,, $$
%where $\psi_q$ has been defined in~\eqref{e:psiscaled}. Analogously, we define an $a_q^{1 + \epsilon \delta}$-restricted version of the odd chessboard set 
%$$B_q = \{  x \in \T^2 :    \vartheta_q \ast \psi_q = -1 \} \,. $$
%We also define the ``good set'' {\color{red} mettere su una riga definizione di $A_q, B_q, G_q$ qui sotto}
\begin{equation} \label{d:G_sets}
A_q = \widetilde{A}_{\lambda_q}[5 a_q^{1+ \epsilon \delta}] \,, \qquad B_q = \widetilde{B}_{\lambda_q}[5 a_q^{1+ \epsilon \delta}] \,, \qquad G_q = A_q \cup B_q \,.
\end{equation} 
We observe that  for any $q\in\N$ there hold
\begin{align} \label{stima:G_q}
\mathcal{L}^2 (A_q) = \mathcal{L}^2 (B_q)  \geq \frac{1}{2} - 10 a_q^{\epsilon \delta} 
\qquad \text{ and } \qquad
 \mathcal{L}^2(G_q) \geq 1 - 20 a_q^{\epsilon \delta}.
\end{align}
The above estimates are proved by elementary geometric considerations. For instance, $\mathcal{L}^2 (G^c_{q})$ is estimated by the measure of $a_q^{-1}$ vertical stripes of measure $10 a_q^{1+ \epsilon \delta}$ plus $a_q^{-1}$ horizontal stripes of measure $10 a_q^{1+ \epsilon \delta}$.
%  and therefore we get $\mathcal{L}^2 (G_q^c) \leq 20 a_q^{\epsilon \delta}$.
Finally, we observe that $\psi_{q}$ is compactly supported in a ball of radius $2 (\lambda_q^{1+ \epsilon \delta})^{-1} \leq 5 a_q^{1+ \epsilon \delta} $, which implies that $w_{q, i} \star \psi_q (x)$ is locally constant in the  set $G_q$.

\begin{remark}[Regularity of the velocity field] \label{remark:vectorfield}%\label{remark:estimates}
The velocity field $u$ constructed above and extended for times~$1 \leq t \leq 2$ either with formula \eqref{vectorfield_VV} or \eqref{vectorfield_convolution}  is locally smooth away from the singular time $t=1$, namely 
$$
u \in C^\infty_{\loc}(((0,2) \setminus \{ 1\}) \times \T^2)\,,
$$
and for any time $t$ it is a shear flow and therefore divergence-free.

Moreover, the velocity field $u$ enjoys the following estimates: for any $q\in \N$
\begin{subequations}\label{s:vectorfield_I_q5_new}
\begin{align}
& \| u \|_{L^\infty (( {\mathcal{J}}_{q,1}) \times \T^2)}   \leq 2  a_q^{ 1 - \gamma}   \hspace{1.05cm} \quad \quad \quad \quad  \| \nabla u \|_{L^\infty (( {\mathcal{J}}_{q,1}) \times \T^2)} \leq  a_{q}^{- \gamma}  \lambda_q^{-1} \lambda_{q+1}^{1 + \epsilon \delta} \label{s:vectorfield_J_q_1}
\\
&   \| u \|_{L^\infty (( {\mathcal{I}}_{q,2} \cup \mathcal{J}_{q,2}) \times \T^2)}   \leq   a_q^{1 - \gamma}    \quad \quad \hspace{1.25cm}  \| \nabla u \|_{L^\infty (( {\mathcal{I}}_{q,2} \cup \mathcal{J}_{q,2}) \times \T^2)} \leq  a_q^{- \gamma} \lambda_q^{-1} \lambda_{q+1}^{1 + \epsilon \delta}   \label{s:vectorfield_I_q3}
\\
& \| u \|_{L^\infty (( {\mathcal{I}}_{q,3} \cup \mathcal{J}_{q,3}) \times \T^2)}   \leq 2  a_{q+1} a_q^{ - \gamma}   \hspace{1.2cm}  \| \nabla u \|_{L^\infty (( {\mathcal{I}}_{q,3} \cup \mathcal{J}_{q,3}) \times \T^2)} \leq  a_{q}^{- \gamma}  \lambda_{q+1}^{ \epsilon \delta}.
\label{s:vectorfield_I_q5}
\end{align}
\end{subequations}
Hence, by interpolation, for any $q\in \N$ we have 
\begin{equation}
\label{eqn:ucalphaspace}
\| u \|_{L^\infty (( {\mathcal{I}}_{q} \cup \mathcal{J}_{q}) ; C^\alpha( \T^2) )}   \lesssim a_q^{1 - \gamma}  a_{q+1}^{-\alpha (1+\epsilon \delta)}  =  a_q^{1 - \gamma - \alpha (1+\epsilon \delta) (1+\delta)}.
\end{equation}

The velocity field enjoys some additional regularity in time thanks to \eqref{d:convolution_time}, namely for any $k\in \N$ with  $1- \gamma -k \gamma >0$ it holds
\begin{equation}
\label{eqn:time-reg}
\| u \|_{L^\infty(\T^2 ;C^k(0,2))} \lesssim_k  \sup_{q} a_q^{1 - \gamma -  k \gamma} < \infty\,.
\end{equation}

Based on the previous properties, the regularity of the velocity field in Theorems~\ref{t_mainadvection} and~\ref{t_main_convol}  follows, namely given any $\alpha \in (0,1)$ the previous construction performed with the choice $\beta=0$ provides a velocity field in $C^{\alpha}((0,2) \times \T^2)$. Indeed, for $\beta= 0 $ we have $\gamma = \delta/8$ and thanks to \eqref{c:alpha_beta_eps_kappa} the right-hand side in~\eqref{eqn:ucalphaspace} is uniformly bounded in $q$, showing the desired H\"older continuity in space. 
As regards the H\"older continuity in time, interpolating between~\eqref{eqn:time-reg} with $k=0$ and $k=1$ we get
$
\| u \|_{L^\infty(\T^2 ;C^\alpha(0,2))}< \infty\,.
$
% 
% , for $\beta =0$
% %in the choice of parameters in Section \ref{ss:para}, we get that 
% the velocity field belongs to $C^{\alpha}((0,2) \times \T^2)$.
 % thanks to the previous estimates and .
%From the proof one can notice that the regularity in time is much better, indeed one can prove a uniform control on $\| u \|_{L^\infty((\T^2 ;C^m(0,2))}  $ for any $m $ such that $ 1- (m+1) \gamma >0$.
\end{remark}

\begin{remark}[The action of the flow without  the swap velocity field]\label{remark:locallyconstant:diffusion}
 Let us fix the velocity field constructed in Section \ref{section:vectorfield} and extended for $1 \leq t \leq 2$ by formula \eqref{vectorfield_VV}, a constant $c>0$ and an integer $k \in \N$. Consider the flow $\XX^q$ associated to the smooth velocity field $u_q = u \mathbbm{1}_{[1-T_q, 1+T_q]^c}$. Then, recalling the definition of~$G_k$ in~\eqref{d:G_sets}, the following properties hold:
\begin{equation}\label{c:vectorfield_constant_locally}
\XX^q_{1-T_k} (x) \in G_{k+1}[c] \ \ \ \Longrightarrow \ \ \  u(t, \XX^q_t (x) + v(t)) = u(t, \XX_t^{q}(x)) \qquad \mbox{for any $t \in {\mathcal{I}}_k$}
\end{equation}
and
\begin{equation}
\XX^q_{1+ T_{k+1}} (x) \in G_{k+1}[c] \ \ \ \Longrightarrow \ \ \  u(t, \XX^q_t (x) + v(t)) = u(t, \XX_t^{q}(x)) \qquad \mbox{for any $t \in {\mathcal{J}}_k$} \,,
\end{equation}
for any path $v: [0,2] \to \T^2 $ such that
 $\| v \|_{L^\infty} \leq c$.
%Similarly, we have 
%
% for any $t \in {\mathcal{J}}_k$ and for any continuous vector valued function  $v: [0,2] \to \T^2 $ such that $\| v \|_{L^\infty} \leq c$.
 Furthermore from the construction of the velocity field we have  (recalling the definition of $A_k$ and $B_k$ as in \eqref{d:G_sets})
%for times $t \leq 1$, we have the following properties:
\begin{subequations}\label{remark:action}
\begin{align}
\XX_{1- T_k}^q ( x )  \in A_{k} \cap G_{k+1} \ \ \  \Longrightarrow \ \ \  \XX_{1- T_{k+1}}^q ( x )  \in A_{k+1},
\\
\XX_{1- T_k}^q ( x )  \in B_{k} \cap G_{k+1} \ \ \  \Longrightarrow \ \ \  \XX_{1- T_{k+1}}^q ( x )  \in B_{k+1}.
\end{align}
\end{subequations}
and
%Indeed, if $\XX_{1- T_q} ( x )  \in G_{q+1}$ then after the action of the velocity field defined on $\mathcal{I}_{q,2}$ in \eqref{d:vectorfield_3} we have a simple translation of the $x$ variable of a quantity $\Delta s = \frac{a_q}{2}$ either on the left or on the right direction, whereas the $y$ variable is constant. Similarly; if $\XX_{1-T-q + \overline{t}_q +4 t_q } (x) \in G_{q+1}$ (which is guaranteed by the fact that $\XX_{1-T_q} (x) \in G_{q+1}$ and that $a_{q}$ is an even multiple of $a_{q+1}$); after the action of the velocity field defined on $ \mathcal{I}_{q,3}$ we have either a simple translation of the $y$ variable of a quantity $\Delta s = a_{q+1}$ or we do not have any translation depending on the position of the point. The geometric construction point \eqref{heuristics_1}  leads us to conclude the two properties, because the shear flows are chosen properly in Section \ref{section:vectorfield}.
%Similarly, if we fix the velocity field constructed in Section \ref{section:vectorfield} and extended  with formula \eqref{vectorfield_VV} for $t \geq 1$, we have the following properties:
\begin{subequations}\label{remark:no_swap_action}
\begin{align}
\XX_{1+ T_{k+1}}^q ( x )  \in A_{k+1} \cap G_{k+1} \ \ \  \Longrightarrow \ \ \  \XX_{ 1+T_k}^q ( x )  \in A_{k},
\\
\XX_{1+ T_{k+1}}^q ( x )  \in B_{k+1} \cap G_{k+1} \ \ \  \Longrightarrow \ \ \  \XX_{ 1+T_k}^q ( x )  \in B_{k}.
\end{align}
\end{subequations}
%where we used the notation $\XX_{t,s}$ to indicate the flow with two parameters, more precisely for any fixed $s$: $t \in [s, 2] \to \XX_{t,s}$ is the flow, solution of \eqref{d:ODE} with initial condition $\XX_{s,s}(x)= x$ (which replace the initial condition $\XX_0 (x)= x$ in \eqref{d:ODE}).
\end{remark}

\begin{remark}[The action of the flow with the swap velocity field] \label{remark:locallyconstant:convolution}
If we consider the velocity field $u$ constructed in Section \ref{section:vectorfield} and extended for $1 \leq t \leq 2$ by formula \eqref{vectorfield_convolution} (by adding the swap velocity field) and denote by $\XX^q$ the flow of the smooth velocity field 
$u_q = u \mathbbm{1}_{[1-T_q, 1+T_q]^c}$, then the same properties as in Remark~\ref{remark:locallyconstant:diffusion} hold for $0 \leq t \leq 1$,  since the velocity field is the same for such times.  {For any $c>0$ and $k \in \N$}, we  additionally have
$$
\XX^q_{1+ T_{k+1}} (x) \in G_{k+1}[c] \ \ \  \Longrightarrow \ \ \ u(t, \XX_t^{q}(x)) = u(t, \XX^q_t (x) + v(t))\,,
$$ 
for any $t \in {\mathcal{J}}_k$ and  for any path $v: [0,2] \to \T^2 $ such that
 $\| v \|_{L^\infty} \leq c$ and  %the property \eqref{c:vectorfield_constant_locally} and the simmetric one for $t \geq 1$ in the previous item still hold, from which it follows that
% Furthermore, from the construction of the velocity field we have
\begin{subequations} \label{remark:swap_action}
\begin{align}
\XX_{1+ T_{k+1}}^q ( x )  \in A_{k+1} \cap G_{k+1} \ \ \  \Longrightarrow \ \ \  \XX_{ 1+T_k}^q ( x )  \in B_{k},
\\
\XX_{1+ T_{k+1}}^q ( x )  \in B_{k+1} \cap G_{k+1} \ \ \  \Longrightarrow \ \ \  \XX_{ 1+T_k}^q ( x )  \in A_{k} ,
\end{align}
\end{subequations}
differently from~\eqref{remark:no_swap_action} due to the presence of the swap velocity field.

%{\color{red} vogliamo chiamare il flusso per il campo con lo swap $\YY$ e il flusso per il campo senza swap $\XX$? In caso da cambiare nella proof della convolution }
%The reason of the switch from the even chessboard set to the odd chessboard set is due to the velocity field $u_{\mathbb{W}}$ in formula \eqref{vectorfield_convolution}.

\end{remark}

%{\color{blue} figures and captions need to be checked and updated} {\color{red} fatto}

 \section{Proof of Theorem~\ref{t_main_convol}} \label{section:prooftheorem}
 
 Given $\alpha\in [0,1[$ as in the statement of Theorem~\ref{t_mainadvection}, we choose $\beta=0$ and recall that all relevant parameters have been correspondingly fixed in Section~\ref{ss:para}. In particular, we recall that
$$
a_0^{\sfrac{\epsilon \delta}{8} } \leq \frac{1}{20} \,, \quad
\epsilon \leq \frac{\delta^3}{ 50}  \,, \quad 
\gamma = \frac{\delta}{8} \,, \quad
\sigma_q = a_q^{1 + \gamma} \,, \quad 
a_{q+1} =a_q^{1+ \delta}\,,
$$
and that the length of the time intervals $\mathcal{I}_{q,j}$ is $a_q^{\gamma - \gamma \delta}$ (for $q \in m\N$  and $j=0$), $0$ (for $q \not\in m\N$ and $j=0$) and $a_q^\gamma$ otherwise. 

We consider the initial datum $\vartheta_{\initial}$ defined in~\eqref{initialdatum}  and the velocity field $u \in C^\alpha ((0,2) \times \T^2)$  constructed in Section~\ref{section:vectorfield} for times $0 \leq t \leq 1$ and extended for times~$1 \leq t \leq 2$ by formula \eqref{vectorfield_convolution} (in particular we make use of the swap velocity field). For the regularity of the velocity field  see Remark \ref{remark:vectorfield}. 
%{\color{red} Can we replace the informal condition $0< \epsilon \ll \delta$ with the precise one?}
%We consider the velocity field defined in Section~\ref{section:vectorfield} (with $0\leq\alpha<1$ as in the statement of Theorem~\ref{t_main_convol} and $\beta=0$) for times $0 \leq t \leq 1$ and extended for times $1 \leq t \leq 2$ as in  \eqref{vectorfield_convolution} (in particular, we make use of the swap velocity field $u_{\swap}$). 

For any $\sigma_q$  we let~$\vartheta_{\sigma_q}$ be the unique solution of the advection equation~\eqref{e:intro:advection} with velocity field~$u \star \varphi_{\sigma_q}$ and initial datum~$\vartheta_{\initial}$ as in \eqref{initialdatum}, where $\varphi \in C^\infty_c ((-1,1) \times B(0,1))$ is the convolution kernel in space-time fixed in the statement of the theorem. We observe that~$\| \vartheta_{\sigma_q} \|_{L^\infty((0,2) \times \T^2)}  \leq  \| \vartheta_0 \|_{L^\infty(\T^2)} \leq 1$ for any $q \in \N$. 

In the following lemma we prove an $L^\infty$ bound on the mollified velocity field $u \star \varphi_\sigma$ for $\sigma \in [a_{q+1},a_q]$ and $t\in (1-T_q,1+T_q)$. The choice of $\sigma_q$ as in~\eqref{parameters:choice_mollification} guarantees the uniform smallness of the velocity field in such time interval.

% and with respect to the velocity fields $u \star \varphi_{\sigma_q}$, it is clear that $\vartheta_{\sigma_q}$ weakly* converge up to subsequence, because we have a uniform $L^\infty$ bound $ \| \vartheta_{\sigma_q} \|_{L^\infty((0,2) \times \T^2)}  \leq  \| \vartheta_0 \|_{L^\infty(\T^2)} \leq 1,$
%for any $q \in \N$; most of the proof is devoted to the characterization of the regular Lagrangian flow $\XX^{\sigma_q}$ with respect to the velocity field $u \star \varphi_{\sigma_q}$.

\begin{lemma} \label{l_1} 
Let $u: (0,2) \times \T^2 \to \R^2$ be the velocity field defined above % (with parameters as in Section~\ref{section:choiceparameters}) 
and let $\varphi \in C^\infty_c ((-1,1) \times B(0,1))$ be a convolution kernel in space-time. Then, for any $q \in \N$, $a_{q+1} \leq \sigma \leq a_q$ we have
 $$ \| u \star \varphi_\sigma \|_{L^\infty((1-T_q, 1+ T_q ) \times \T^2)} \leq  \frac{ \overline{C} \| \varphi \|_{C^1} a_{q+1} a_q^{1- \gamma}}{\sigma},$$
 where $\overline{C}>0$ is a universal constant.
\end{lemma}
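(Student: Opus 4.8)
The plan is to exploit the fact that, for $t\in(1-T_q,1+T_q)$, the space–time convolution at scale $\sigma\in[a_{q+1},a_q]$ only ever ``sees'' shear flows oscillating at spatial frequency at least $\lambda_{q+1}$ (or the regions where $u$ vanishes), and that convolution against a kernel of width $\sigma\gg\lambda_{q+1}^{-1}$ damps such oscillations through a single integration by parts. Writing $\varphi_\sigma(s,y)=\sigma^{-3}\varphi(s/\sigma,y/\sigma)$, so that $\|\varphi_\sigma\|_{L^1}=\|\varphi\|_{L^1}\lesssim\|\varphi\|_{C^1}$ and $\|\nabla_y\varphi_\sigma\|_{L^1}=\sigma^{-1}\|\nabla_y\varphi\|_{L^1}\lesssim\sigma^{-1}\|\varphi\|_{C^1}$, I would first localize in time. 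Fix $t\in(1-T_q,1+T_q)$; I claim that for every $s$ with $|s|<\sigma$ the field $u(t-s,\cdot)$ either vanishes or is a single shear flow of spatial frequency $\geq\lambda_{q+1}$. Indeed, the intervals $\mathcal{I}_k,\mathcal{J}_k$ with $k\geq q$ tile $(1-T_q,1+T_q)\setminus\{1\}$ and on each of them $u$ is a single shear at frequency $\lambda_{k+1}\geq\lambda_{q+1}$ (or zero on $\mathcal{I}_{k,0}\cup\mathcal{I}_{k,1}$). The only risk is that $t-s$ leaves $(1-T_q,1+T_q)$ and reaches the slower intervals $\mathcal{I}_{q-1}$ or $\mathcal{J}_{q-1}$; but the cutoffs $\eta_{q-1,\cdot}$ are supported in the $\tfrac{a_{q-1}^\gamma}{6}$-interior of their intervals, so $u\equiv0$ on a two-sided neighbourhood of $1\mp T_q$ of radius $\tfrac{a_{q-1}^\gamma}{6}$ (for $q=0$ the left neighbour $\mathcal{I}_{-1}$ carries $u\equiv0$ anyway). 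Since $\sigma\leq a_q<\tfrac{a_{q-1}^\gamma}{6}$ thanks to $\gamma<1+\delta$ and the smallness of $a_0$ in~\eqref{c:d_0}, the window $(t-\sigma,t+\sigma)$ never reaches a nonvanishing part of those intervals, proving the claim.

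For the pointwise bound, for fixed $s$ I would decompose $u(t-s,\cdot)=\bar u(t-s)+\tilde u(t-s,\cdot)$ into its spatial mean and its mean-zero part. On the intervals carrying $w_{k+1,2}$ and on the swap intervals carrying $w_{k+1,\swap}$ the profile is a sign-alternating multiple of the mean-zero function $W$ (and $a_k/a_{k+1}\in4\N$ makes the parity pattern commensurate with the oscillation), so these contribute nothing to the mean; only the $w_{k+1,3}$ intervals, where $\widetilde W,\overline W$ have mean $\tfrac12$, contribute a mean bounded by the amplitude in~\eqref{s:vectorfield_I_q5_new}. Since $a_k^{1+\delta-\gamma}$ is decreasing in $k$, for $t-s$ in the window
\[
|\bar u(t-s)|\leq\max_{k\geq q}2a_{k+1}a_k^{-\gamma}=2a_{q+1}a_q^{-\gamma}.
\]
The mean-zero part $\tilde u(t-s,\cdot)$ oscillates at frequency $\lambda_{k+1}$ with amplitude $\leq 2a_k^{1-\gamma}$, hence admits a \emph{periodic} primitive $V(t-s,\cdot)$ in the oscillating direction with
\[
\|V(t-s,\cdot)\|_{L^\infty}\lesssim\max_{k\geq q}\frac{a_k^{1-\gamma}}{\lambda_{k+1}}\lesssim\max_{k\geq q}a_{k+1}a_k^{1-\gamma}=a_{q+1}a_q^{1-\gamma},
\]
the last equality again by monotonicity of $a_k^{2+\delta-\gamma}$. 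Integrating by parts once in the oscillating variable (no boundary terms, as $\varphi_\sigma$ is compactly supported) gives, for every $x$,
\[
\left|\int u(t-s,x-y)\varphi_\sigma(s,y)\,dy\right|\leq|\bar u(t-s)|\int|\varphi_\sigma(s,y)|\,dy+\|V(t-s,\cdot)\|_{L^\infty}\int|\nabla_y\varphi_\sigma(s,y)|\,dy.
\]

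Integrating this over $|s|<\sigma$ and inserting the two global bounds on $\|\varphi_\sigma\|_{L^1}$ and $\|\nabla_y\varphi_\sigma\|_{L^1}$ yields
\[
|u\star\varphi_\sigma(t,x)|\lesssim a_{q+1}a_q^{-\gamma}\|\varphi\|_{C^1}+\frac{a_{q+1}a_q^{1-\gamma}}{\sigma}\|\varphi\|_{C^1},
\]
and the elementary inequality $a_{q+1}a_q^{-\gamma}\leq a_{q+1}a_q^{1-\gamma}/\sigma$ (which holds precisely because $\sigma\leq a_q$) bounds both terms by $\overline C\|\varphi\|_{C^1}a_{q+1}a_q^{1-\gamma}/\sigma$, as claimed. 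The delicate point is the time-localization step: one must rule out that the \emph{fixed} scale $\sigma$, although possibly as large as $a_q$, lets the convolution pick up the neighbouring but slower $\lambda_q$-frequency shear flows, whose amplitude would otherwise dominate the bound; this is exactly the role of the buffer zones of length $\gtrsim a_{q-1}^\gamma\gg\sigma$ created by the compactly supported temporal cutoffs. The single genuine ``gain'' comes from the one integration by parts against the mean-zero high-frequency profile, while the non-oscillating contributions (the $w_{k+1,3}$ pieces and the mean) do not decay under convolution but are harmless since their amplitude is already $\lesssim a_{q+1}a_q^{-\gamma}\leq a_{q+1}a_q^{1-\gamma}/\sigma$.
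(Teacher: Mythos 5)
Your proof is correct, but it takes a genuinely different route from the paper's in the key cancellation step, so let me compare the two. The paper also works interval by interval, but its mechanism for exploiting the $\lambda_{q+1}$-oscillation is a half-period translation trick: it averages the convolution integral with its translate by $a_{q+1}\mathbf{e}_2$, uses the near anti-periodicity $w_{q+1,2}(y)=-w_{q+1,2}(y+a_{q+1}\mathbf{e}_2)$ to turn the sum into a difference of kernel values (gaining a factor $\|\nabla \varphi\|_{L^\infty}a_{q+1}/\sigma$), and pays for the failure of anti-periodicity near the sign-flip stripes with a separate estimate on the exceptional set $Z^c\cap B(x,2\sigma)$, of measure $\lesssim a_{q+1}\sigma$. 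Your mechanism --- splitting each frozen-time shear into spatial mean plus mean-zero part and integrating by parts once against a globally periodic primitive $V$ with $\|V\|_{L^\infty}\lesssim (\text{amplitude})/\lambda_{k+1}$ --- yields the same gain with no exceptional set, precisely because the commensurability $a_k/a_{k+1}\in 4\N$ makes the primitive vanish at the stripe boundaries and hence be exactly periodic; the translation trick is the discrete cousin of your integration by parts, but the bookkeeping is different. Two further points are in your favor. First, your explicit time-localization step (the buffers of width $a_{q-1}^{\gamma}/6\gg\sigma$ coming from the interior-supported cutoffs $\eta_{q-1,\cdot}$, which prevent the window from seeing the large, slow $\lambda_q$-frequency shears of $\mathcal{I}_{q-1}\cup\mathcal{J}_{q-1}$) is only implicit in the paper's case-by-case organization, yet it is needed by either argument. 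Second, your max-over-$k\geq q$ bounds treat all of $(1-T_q,1+T_q)$ in one sweep, including the deep region $[1-T_{q+1},1+T_{q+1}]$; the paper instead dispatches that region with the sup bound $\|u\|_{L^\infty}\leq 2a_{q+1}a_q^{-\gamma}$, which is not accurate there (on $\mathcal{I}_{q+1,2}$ the field has size $a_{q+1}^{1-\gamma}=a_{q+1}a_{q+1}^{-\gamma}\gg a_{q+1}a_q^{-\gamma}$), and for $\sigma$ close to $a_q$ a sup bound alone cannot give the stated estimate; the oscillation gain at frequencies $\lambda_{k+1}$, $k\geq q+1$, which your argument supplies automatically, is what closes that case. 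So your proof is valid and, at that point, somewhat more robust than the paper's; the price is the (routine) verification that the primitives of the mollified profiles $w_{k+1,i}\star\psi_{k+1}$ inherit the bound $\|V\star\psi_{k+1}\|_{L^\infty}\lesssim\|V\|_{L^\infty}$, which you use implicitly.
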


\begin{proof}%[Proof of Lemma \ref{l_1}] 
%We first prove the estimate on $I_{q,3}$. 
%{\color{red} occhio che $\mathcal{J}_{q,3}$ e $\mathcal{I}_{q,3}$ son $\mathcal{J}_{q,2}$ e $\mathcal{I}_{q,2}$}
Let us fix $q$ and $\sigma$ as in the statement. We first estimate $(u \star \varphi_\sigma) ( t, x)$ for  $x \in \T^2$ and $t \in \mathcal{I}_{q,2}$. Recalling \eqref{d:vectorfield_3} we define  the set
$$ Z =
%= Z_{q,3} (\overline{x}, \sigma) = 
\{(y_1, y_2) \in B (x, 2 \sigma)   : w_{q+1, 2}(y_2 ) = - w_{q+1,2}( y_2 + a_{q+1}) \} .$$
It holds that $\mathcal{L}^2 (Z^c \cap B(x, 2 \sigma)) \leq 64 a_{q+1} \sigma$ since the set  $Z^c \cap B(x, 2 \sigma)$ is made of at most $16$ horizontal stripes of length $4\sigma$ and height $a_{q+1}$ thanks to the construction of $w_{q+1,2}$. Using~\eqref{s:vectorfield_I_q3} we estimate
\begin{align*}
\left | (u \star \varphi_\sigma) (t, x) \right | & = \frac{1}{2}  \left | \int_{-1}^1 \int_{\T^2} \left [ u(s, y)\varphi_\sigma (t -s, x - y   )  +
 u(s, y + a_{q+1} {\bf e}_2 )\varphi_\sigma (t -s, x - y + a_{q+1}  {\bf e}_2 ) \right ] dy ds  \right |
\\
& \leq  \frac{1}{2} \left |  \int_{-1}^1 \int_{Z} \left [ u(s, y)\varphi_\sigma (t -s, x - y   ) +   u(s, y + a_{q+1} {\bf e}_2)\varphi_\sigma (t -s, x - y + a_{q+1} {\bf e}_2  ) \right ] dy ds \right |
\\
& \quad + \frac{1}{2}  \left | \int_{-1}^1 \int_{Z^c \cap B(x, 2 \sigma)} \big[ u(s, y)\varphi_\sigma (t -s, x - y   ) + u(s, y + a_{q+1} {\bf e}_2)\varphi_\sigma (t -s, x - y + a_{q+1} {\bf e}_2) \big] dy ds \right |
\\
& \leq \frac{ \| u \|_{L^\infty (\mathcal I_{q,2} \times \T^2)}}{2} \int_{-1}^1 \int_{Z} | \varphi_\sigma (t -s, x - y   )  - \varphi_\sigma (t -s, x - y + a_{q+1} {\bf e}_2  ) | dy ds
\\ 
& \quad +  2 \mathcal{L}^2 (Z^c \cap B(x, 2 \sigma))  \| u \|_{L^\infty (\mathcal I_{q,2} \times \T^2)}   \int_{-1}^1 \| \varphi_\sigma(s,\cdot) \|_{L^\infty(\T^2)} ds 
\\
& \leq \frac{a_q^{1- \gamma} 4 \pi \| \nabla \varphi \|_{L^\infty} a_{q+1}}{ \sigma}   + \frac{256 a_{q+1} \sigma a_q^{1- \gamma} \| \varphi \|_{L^\infty}}{\sigma^2}.
\end{align*}
The same estimate holds for $t \in \mathcal{J}_{q,2}$. For $t \in \mathcal{J}_{q,1}$ the estimate is similar (in fact, even easier, since the velocity field is exactly periodic and therefore the corresponding set $Z^c \cap B(x , 2 \sigma)$ is empty).
%${Z}_{q,1}(\overline{x}, \sigma) = \emptyset$). 
Finally, we observe that for $t \in I= \mathcal{I}_{q,3} \cup \mathcal{J}_{q,3} \cup [1-T_{q+1}, 1+ T_{q+1}] $ the estimate follows from the bound 
$$
\|u \|_{L^\infty (I \times \T^2)} \leq 2 a_{q+1} a_q^{- \gamma} \leq \frac{256 \| \varphi \|_{C^1} a_{q+1} a_q^{1- \gamma}}{\sigma} \,. \qedhere
$$
\end{proof}

%{\color{red} se vi torna, cambiare la costante nella proof mollificazione.}

\begin{proof}[Proof of Theorem~\ref{t_main_convol}]
In the first three steps of the proof {we fix $q \in \N$ sufficiently large} and we describe (up to small errors that we explicitly quantify) the flow $\XX^{\sigma_q}$ on the time intervals $[0,1-T_q]$, $[1-T_q, 1+T_q]$, and $[1+ T_q, 2]$, respectively. In the last step we prove that the two subsequences $\vartheta_{\sigma_{2q}}$ and $\vartheta_{\sigma_{2q+1}}$ cannot converge (with respect to the weak$^*$ topology) to the same limit {as $q \to \infty$.}

%
%%We will use the mollification parameters $\overline{\kappa}_q$, defined in Section \ref{section:choiceparameters}, that we rename $\kappa_q$ for the simplicity of the notation.
% We now want to characterize the limit points in a time neighbourhood of $t=2$, where the two subsequences $\vartheta_{\sigma_{2q+1}}$ and $\vartheta_{\sigma_{2q}}$ are  distinct; to do so we will deeply rely on the formula \eqref{eq:solutiontoPDE} that characterize the solution in terms of the regular Lagrangian flow.
% 
% For the sake of clarity we split the proof in four parts: 
% \begin{enumerate} 
% \item characterization of the regular Lagrangian flow $\XX^{\sigma_q}$ in the time interval $[0,1-T_q]$,
% \item characterization of the  regular Lagrangian flow $\XX^{\sigma_q}$ in the time interval $[1-T_q, 1+T_q]$,
% \item characterization of the  regular Lagrangian flow $\XX^{\sigma_q}$ in the time interval $[1+ T_q, 2]$,
% \item weakly* convergence to different limit points.
% \end{enumerate}

\medskip

\noindent{\bf Step 1: $\XX^{\sigma_q}$ almost preserves the chessboards in the time interval $[0, 1-T_q]$. } {\sl For every $j+1 \leq q $ the flow $\XX^{\sigma_q}$ satisfies 
% 
% \subsection{Step 1} \label{Step_1}
% We characterize $\XX^{\sigma_q}$ up to time $t=1- T_{q}$.
% \subsubsection*{Claim}  The regular Lagrangian flows $\XX^{\sigma_q}$ are such that
$$
\XX^{\sigma_q}_{1- T_j} ( x )  \in A_{j} \cap G_{j+1}[\sigma_q] \ \ \  \Longrightarrow \ \ \  \XX^{\sigma_q}_{1- T_{j+1}} ( x )  \in A_{j+1}[\sigma_q] 
$$
and
$$
\XX^{\sigma_q}_{1- T_j} ( x )  \in B_{j} \cap G_{j+1}[\sigma_q] \ \ \  \Longrightarrow \ \ \  \XX^{\sigma_q}_{1- T_{j+1}} ( x )  \in B_{j+1}[\sigma_q] \,.
$$
}

\medskip

We claim that for any $x \in G_{j+1}[\sigma_q]$ it holds
\begin{equation}\label{e:integrated}
\int_{1-T_j}^{1-T_{j+1}} u_{\sigma_q} (t, \XX^{\sigma_q}_{t,1-T_j} (x)) dt
= \int_{1-T_j}^{1- T_{j+1}} u(t, \XX_{t,1-T_j}(x)) dt \,.
\end{equation}
Indeed, we first observe that
$$
%\begin{equation}\label{e:integrated}
\int_{\mathcal{I}_{j,2}} u_{\sigma_q} (t, \XX^{\sigma_q}_{t,1-T_j} (x)) dt
= \begin{pmatrix} \pm \frac{a_j}{2} \\ 0 \end{pmatrix}
= \int_{\mathcal{I}_{j,2}} u(t, \XX_{t,1-T_j}(x)) dt \,,
$$
where we used that $u=0$ for $t \not \in \mathcal{I}_{j,2}[\sfrac{a_j^\gamma}{6}]$ and $\sigma_q = a_q^{1+\gamma} \leq \sfrac{a_q^\gamma}{6}$;
the $\pm$ sign depends on the strip to which the point $x$ belongs (recall~\eqref{d:vectorfield_3}).
%\end{equation}
Since $\sfrac{a_j}{2}$ is a multiple of $a_{j+1}$ we have that
$$
\XX^{\sigma_q}_{1-T_j+\overline{t}_j+2t_j,1-T_j} (x) = \XX_{1-T_j+\overline{t}_j+2t_j,1-T_j} (x) \in G_{j+1}[\sigma_q] \,.
$$
Similarly, we have
$$
%\begin{equation}\label{e:integrated}
\int_{\mathcal{I}_{j,3}} u_{\sigma_q} (t, \XX^{\sigma_q}_{t,1-T_j} (x)) dt
= \begin{pmatrix} 0 \\ \frac{a_{j+1} \pm a_{j+1}}{2} \end{pmatrix}
= \int_{\mathcal{I}_{j,3}} u(t, \XX_{t,1-T_j}(x)) dt \,,
$$
which in particular shows~\eqref{e:integrated}. 
%\end{equation}
%
%$$
%\XX^{\sigma_q}_{1-T_j+\overline{t}_j+2t_j,1-T_j} (x) \in G_{j+1}[\sigma_q]
%\qquad \text{ and } \qquad
%\XX_{1-T_j+\overline{t}_j+2t_j,1-T_j} (x) \in G_{j+1}[\sigma_q]
%$$
%
Equality~\eqref{e:integrated} implies that $\XX^{\sigma_q}_{1-T_{j+1},1-T_j} (x) = \XX_{1-T_{j+1},1-T_j}(x)$ and in particular the two statements claimed in Step~1 follow from the corresponding properties \eqref{remark:action} for the flow $\XX$.%in Remark~\ref{remark:locallyconstant:convolution}.

\medskip
 
\noindent{\bf Step 2: Shortness of the trajectories of $\XX^{\sigma_q}$ in the time interval $[1-T_q, 1+T_q]$.} {\sl For every~$x \in \T^2$ the flow $\XX^{\sigma_q}$ satisfies
$$
| \XX^{\sigma_q}_{1+T_q} (x) -  \XX^{\sigma_q}_{1-T_q} (x) | \leq  \sigma_q \,.
$$
}

Indeed, by Lemma~\ref{l_1} we can bound the displacement for $t \in [1-T_q, 1+ T_q]$ of the integral curves of the velocity field $u \star \varphi_\sigma$ by
$$
2T_q \| u \star \varphi_\sigma \|_{L^\infty((1-T_q, 1+ T_q ) \times \T^2)}
\leq
2 T_q \frac{\overline{C} C a_{q+1} a_{q}^{1- \gamma}}{\sigma_q}  
\leq 
2 \overline{C} C a_{q}^{1 + \delta  -2 \gamma }  
\leq 
a_q^{1 + \gamma} 
= 
\sigma_q\,,
$$
where the last inequality holds thanks to $\delta \geq 4 \gamma$ and assuming that $q$ is sufficiently large to have $a_q^{- \gamma} \geq 2  \overline{C} C$.

\medskip

\noindent{\bf Step 3: $\XX^{\sigma_q}$ swaps the chessboards in the time interval $[1+T_q, 2]$. } {\sl For every $j+1 \leq q $ the flow~$\XX^{\sigma_q}$ satisfies 
$$
\XX^{\sigma_q}_{1+ T_{j+1}} ( x )  \in A_{j+1} \cap G_{j+1}[\sigma_q] \ \ \  \Longrightarrow \ \ \  \XX^{\sigma_q}_{1+ T_{j}} ( x )  \in B_{j} [\sigma_q] 
$$
and 
$$
\XX^{\sigma_q}_{1+ T_{j+1}} ( x )  \in B_{j+1} \cap G_{j+1}[\sigma_q] \ \ \  \Longrightarrow \ \ \  \XX^{\sigma_q}_{1+ T_{j}} ( x )  \in A_{j} [\sigma_q] \, .
$$
}

This is shown as in Step~1, by recalling the presence of the swap velocity field and relying on \eqref{remark:swap_action}.

\medskip

\noindent{\bf Step 4: Lack of selection.} {\sl We conclude by showing that the two subsequences $\vartheta_{\sigma_{2q}}$ and $\vartheta_{\sigma_{2q+1}}$ cannot converge (with respect to the weak$^*$ topology) to the same limit as $q \to \infty$.}

\medskip

%
%
%We prove that $\vartheta_{\sigma_{2q}}$ and $\vartheta_{\sigma_{2q +1}}$ weakly* converge, up to subsequences, to different limit points.
%\subsubsection*{Claim} 
%Up to not relabelled  subsequences we have that 
%\begin{align*}
%\vartheta_{\sigma_{2q}} \overset{*}{\rightharpoonup} \vartheta^{even},
%\\
%\vartheta_{\sigma_{2q+1}} \overset{*}{\rightharpoonup} \vartheta^{odd},
%\end{align*}
%where $\vartheta^{odd}(t,x) \neq \vartheta^{even}$.
%\footnote{Informally we somehow have $\vartheta^{odd} \sim 1 - \vartheta^{even}(t,x) \sim 1 - \vartheta_{\initial}(x)$ for all $x \in \T^2$ and $1+T_0  \leq t \leq 2$.}
%\subsubsection*{Proof} 
%It is clear that $\| \vartheta_{\sigma_{2q}} \|_{L^\infty} \leq 1 $ and $\| \vartheta_{\sigma_{2q+1}} \|_{L^\infty} \leq 1$, then up to subsequences they weakly* converge. 

We define for every $q \in \N$ the set
$$ 
O_q = \bigcap_{k = 0}^{q-1} (\XX^{\sigma_q}_{1-T_k})^{-1} ( G_{k+1}[ \sigma_q ]) \cap   \bigcap_{k = 1}^{q} (\XX^{\sigma_q}_{1+T_{k}})^{-1} (G_{k}[\sigma_q]) \,.
 $$ 
We notice that $O_q$ has large measure. Indeed, since the flows $\XX^{\sigma_q}$ are measure preserving, thanks to \eqref{stima:G_q} and the choice of $a_0$ in~\eqref{c:d_0},  we can estimate
\begin{equation}\label{e:bigOq}
\mathcal{L}^2 (O_q^c) \leq 2 \sum_{k=0}^q \mathcal{L}^2 (( G_k[\sigma_q])^c)
\leq 2 \sum_{k=0}^q 24 a_k^{\epsilon \delta}
\leq 96 a_0^{\epsilon \delta} \leq  \frac{1}{10} \,.
\end{equation}

Since the unique solution $\vartheta_{\sigma_q}$ is characterized by~\eqref{eq:solutiontoPDE} we can test against $\mathbbm{1}_{A_0}(x)$ and compute
\begin{align*}
\int_{\T^2} \vartheta_{\sigma_q} &(t,x) \mathbbm{1}_{A_0}(x) \, dx
=
\int_{\T^2} \vartheta_{\initial} (x) \mathbbm{1}_{A_0} (\XX^{\sigma_q} (t,x)) \, dx \\
&=
\int_{O_q} \vartheta_{\initial} (x) \mathbbm{1}_{A_0} (\XX^{\sigma_q} (t,x)) \, dx +
\int_{O_q^c} \vartheta_{\initial} (x) \mathbbm{1}_{A_0} (\XX^{\sigma_q} (t,x)) \, dx \\
&=
\int_{O_q} \mathbbm{1}_{A_0} (x) \mathbbm{1}_{A_0} (\XX^{\sigma_q} (t,x)) \, dx -
\int_{O_q} \mathbbm{1}_{B_0} (x) \mathbbm{1}_{A_0} (\XX^{\sigma_q} (t,x)) \, dx +
\int_{O_q^c} \vartheta_{\initial} (x) \mathbbm{1}_{A_0} (\XX^{\sigma_q} (t,x)) \, dx \\
&= I_1 - I_2 +I_3 \,. 
\end{align*}
By~\eqref{e:bigOq}, we see that $|I_3| \leq \sfrac{1}{10}$.
%
%
%the last integral is estimated by
%$$
%\left| \int_{O_q^c} \vartheta_{\initial} (x) \mathbbm{1}_{A_0} (\XX^{\sigma_q} (t,x)) \, dx \right| \leq \frac{1}{10} \,.
%$$
From Steps~1--3 we see that, for $t \geq 1+ T_0$ and for $q \in \N$ even, there holds
\begin{align*}
%\XX^{\sigma_q}_{0} ( x )
x  \in A_{0} \cap O_q \quad &\Longrightarrow \quad \XX^{\sigma_q}_{t} ( x )  \in A_0 \,, \\
%\XX^{\sigma_q}_{0} ( x ) 
x \in B_{0} \cap O_q \quad &\Longrightarrow \quad  \XX^{\sigma_q}_{t} ( x )  \in B_0 \,,
\end{align*}
and, for $t \geq 1+ T_0$ and for $q \in \N$ odd, there holds
\begin{align*}
%\XX^{\sigma_q}_{0} ( x ) 
x \in A_{0} \cap O_q \quad &\Longrightarrow \quad \XX^{\sigma_q}_{t} ( x )  \in B_0 \,, \\
%\XX^{\sigma_q}_{0} ( x ) 
x \in B_{0} \cap O_q \quad &\Longrightarrow \quad  \XX^{\sigma_q}_{t} ( x )  \in A_0 \,.
\end{align*}
Using again~\eqref{c:d_0},  ~\eqref{stima:G_q} and  ~\eqref{e:bigOq}   we have that $I_1 \geq  \sfrac{1}{2} - \sfrac{2}{10}$ and $I_2=0$ for~$q$ even, while $I_1=0$ and $I_2 \geq \sfrac{1}{2} - \sfrac{2}{10}$ for~$q$ odd. Therefore,  for any $q\in\N$ sufficiently large,
$$
\int_{\T^2} \vartheta_{\sigma_{2q}} (t,x) \mathbbm{1}_{A_0}(x) \, dx \geq \frac{1}{2} - \frac{3}{10} > -\frac{1}{2} + \frac{3}{10}
\geq \int_{\T^2} \vartheta_{\sigma_{2q+1}} (t,x) \mathbbm{1}_{A_0}(x) \, dx
$$
which implies the thesis.
\end{proof}

\section{Convergence to a solution which conserves the $L^2$ norm in Theorem~\ref{t_mainadvection}}\label{section:conservativesolution}

%We consider the initial datum $\vartheta_{in}$ defined in~\eqref{initialdatum} and  the velocity field $u$ constructed in Section~\ref{section:vectorfield}. 
Given $\alpha\in [0,1[$ as in the statement of Theorem~\ref{t_mainadvection}, we choose $\beta=0$ and recall that all relevant parameters have been correspondingly fixed in Section~\ref{ss:para}. In particular, we recall that
$$
a_0^{\sfrac{\epsilon \delta}{8} } \leq \frac{1}{20} \,, \quad
\epsilon \leq \frac{\delta^3}{ 50} \,, \quad 
\gamma = \frac{\delta}{8} \,, \quad
\kappa_q = a_q^{2 + 3 \epsilon} \,, \quad 
a_{q+1} =a_q^{1+ \delta}\,,
$$
and  that the length of the time intervals $\mathcal{I}_{q,j}$ is $a_q^{\gamma - \gamma \delta}$ (for $q \in m\N$  and $j=0$), $0$ (for $q \not\in m\N$ and $j=0$) and $a_q^\gamma$ otherwise. 

We consider the initial datum $\vartheta_{in}$ defined in~\eqref{initialdatum}  and the velocity field $u \in C^\alpha ((0,2) \times \T^2)$ constructed in Section~\ref{section:vectorfield} for times $0 \leq t \leq 1$ and extended for times~$1 \leq t \leq 2$ by formula \eqref{vectorfield_VV}. For the regularity of the velocity field see Remark \ref{remark:vectorfield}.

For any $q \in \N$, we consider the unique bounded solution $\vartheta_{\kappa_q}$ of the advection-diffusion equation~\eqref{e:intro:advdiff} with velocity field~$u$, initial datum $\vartheta_{\initial}$, and diffusivity $\kappa = \kappa_q$. 
%To understand the better behaviour of $\vartheta_{\kappa_q}$ we introduce  the (unique) stochastic Lagrangian flows  (see Definition \ref{d:sLf})  $ \XX^{\kappa_q}$  with respect to the velocity field $u$ and with diffusion parameter $\kappa = \kappa_q$ which is strictly related to it because of the Feynman Kac formula \eqref{feynman_kac}; most of the proof is devoted to study the stochastic Lagrangian flow $\XX^{\kappa_q}$ depending on the diffusion parameter $\kappa_q$ in the equation \eqref{d:sdee}.
%From the properties of the stochastic Lagrangian flow $ \XX^{\kappa_q}$ we will deduce the conservation of the $L^2$ norm on $\vartheta_{\kappa_q}$ thanks to the representation formula given in Lemma \ref{prop:representation}.
We also define by truncation in time the smooth velocity fields
\begin{align*}
u_q (t,x) = u(t,x) \mathbbm{1}_{[1-T_q, 1+ T_q]^c} (t),
\end{align*}
{for any $q \in \N$}
and we let $\XX^q$ be the flow of $u_q$ and $\vartheta_q$ be the unique solution of the advection equation~\eqref{e:intro:advection} with velocity field $u_q$ and initial datum $\vartheta_{\initial}$.

We split the proof in four steps. In the first three steps, we fix $q \in \N$ and we show that for the flows $\XX^q$ and the stochastic flows $\XX^{\kappa_q}$ it holds
\begin{equation} \label{stability:flows}
| \XX^{\kappa_q}_t (x, \omega) - \XX^q_t (x)| \leq a_q^{1+ \frac{\epsilon}{2}} 
\qquad \text{for $(x, \omega) \in O_q \text{ and } t\in[0,2]$,}
\end{equation} 
for a certain set $O_q \subset \T^2 \times \Omega$ with $(\mathcal{L}^2 \otimes \mathbb{P}) (O_q) \to 1$ as $q\to\infty$. In the last step we exploit~\eqref{stability:flows} and the representation formulas for the solutions to show that
$$ 
\vartheta_{\kappa_q} - \vartheta_q \to 0 
$$
in the sense of distributions {as $q \to \infty$}. In particular, $\vartheta_q$ and $\vartheta_{\kappa_q}$ converge in the sense of distributions to the same limit. This will conclude the proof since $\vartheta_q$ strongly converges to a  solution  of the advection equation~\eqref{e:intro:advection} which conserves the $L^2$ norm.

%We split the proof in four main steps where we consider the same time intervals as in the proof of Theorem \ref{t_main_convol} in Section \ref{section:prooftheorem}. 
%Furthermore, we notice that the behaviour of the stochastic Lagrangian flow will be in most of the time similar ``in a probability sense'' to the behaviour of the regular Lagrangian flow. Indeed the aim of the steps are similar.
%We also recall the notation used in \eqref{d:G_sets}. 
%The following statements are true for any $q \in \N$ sufficiently large.

%{\color{red} ricordare notazione $G_q$? \eqref{d:G_sets}??}

\medskip

\noindent{\bf Step~1: Closeness of $\XX^{\kappa_q}_t$ and $\XX^q_t$ for $t \in [0,1-T_q]$.} {\sl 
For any $q$ let $ \overline{q}=\overline{q}(q)$ be the largest natural number such that 
\begin{align} \label{c:j_q}
a_{q}^{\sfrac{ \epsilon}{4} }  \exp(a_{\overline{q}}^{- \gamma - \delta - 2 \epsilon \delta})  \leq \frac{1}{6}
\end{align}
and notice that $\overline{q} \to \infty$ as $q \to \infty$.
%We characterize the stochastic Lagrangian flow $\XX_t^{\kappa_q}$  from time $t=0$ to time $t=1- T_{\overline{q}}$, where the Stochastic Lagrangian flow behaves similarly to the regular Lagrangian flow $\XX^q$ of the smooth velocity field $u_q$ on a large probability set.
We claim that there exist $D_{{q}} \subset \T^2$ and $\Omega_{q,1} \subset \Omega$ with $\mathcal{L}^2 (D_{{q}}) \geq 1 - 12 a_{\overline{q}}^{\epsilon \delta}$ and $\mathbb{P} (\Omega_{q,1}) \geq 1 - \exp\big({- a_q^{- \epsilon/2} /4}\big) $ such that for all $t \leq 1- T_q$
% The stochastic Lagrangian flows $\XX_t^{\kappa_q}$ satisfies the following
$$
  | \XX_{t}^{\kappa_q} (x, \omega) - \XX_t^q (x) | \leq \frac{a_{q}^{1+ \epsilon}}{3}
  \qquad \text{ for  $x \in D_{{q}}$ and $\omega \in \Omega_{q,1}$}.
$$
 }

\medskip

First, we define
\begin{equation}\label{omega_q}
\Omega_{q,1} : = \left \{ \omega \in \Omega : \sqrt{2 \kappa_q} \sup_{t \in [0,2]} |\WW_t (\omega) | \leq a_q^{1+
\frac{5 \epsilon}{4}}   \right \} \subset \Omega\,, 
\end{equation}
and we use~\eqref{stima:brownian} to get the probability estimate
\begin{equation}\label{omega_q2}
\mathbb{P} (\Omega_{q,1}) \geq 1 - \exp( - a_q^{- \epsilon /2} /4)\,,
\end{equation}
We employ~\eqref{l:gronwall} to get a closeness estimate in the smaller time interval $[0,1- T_{\overline{q}}]$, namely
\begin{align} \label{eq:gronwall_step1}
| \XX_{t}^{\kappa_q} (x, \omega) - \XX_t^q (x)| & \leq  \left ( \sqrt{2 \kappa_q} \sup_{t \in [0,2]} | \WW_t(\omega)| \right) \exp \left(\int_0^t \| \nabla u(s,\cdot) \|_{L^\infty} ds \right) \notag
\\
& \leq a_{q}^{1+ \frac{5 \epsilon}{4} }  \exp\left(a_{\overline{q}}^{- \gamma - \delta - 2 \epsilon \delta}\right) \leq  \frac{a_q^{1+ \epsilon}}{6},
\end{align} 
for any $\omega \in \Omega_{q,1}$,
where we used~\eqref{c:j_q} and the estimate $\| \nabla u(s,\cdot) \|_{L^\infty}  \leq a_{\overline{q}}^{- \gamma - \delta - 2 \epsilon \delta}$ for $s \leq 1 - T_{\overline{q}}$ (recall Remark~\ref{remark:vectorfield}).

We now show the closeness on the time interval $[1- T_{\overline{q}}, 1- T_q]$. To this extent, we recall the definition of $G_k$ in~\eqref{d:G_sets} and define
\begin{equation} \label{d:A_q}
D_q := \bigcap_{k=\overline{q}}^{q-1} D_{q,k}  := \bigcap_{k=\overline{q}}^{q-1} \{ x \in \T^2 : \XX^{q}_{1- T_k} (x) \in G_{k+1}[a_q^{1+ \epsilon}] \} \,.
\end{equation}
Thanks to \eqref{stima:G_q},  we observe that
\begin{equation}\label{estimate:A_q}
\mathcal{L}^2(D_q^c) \leq \sum_{k=\overline{q}}^{q-1} \mathcal{L}^2 (( G_{k+1}[a_q^{1+ \epsilon}])^c) \leq \sum_{k=\overline{q}}^{q-1} (20 a_{k+1}^{\epsilon \delta}  + 4 a_q^\epsilon) \leq 48 a_{\overline{q}}^{\epsilon \delta}\,.
\end{equation}
For any $x \in \T^2$ and $\omega \in \Omega_{q,1}$ we define
$$ \tau= \tau (x, \omega) := \min \left  \{ t: | \XX_{t}^{\kappa_q} (x, \omega) - \XX_t^q (x) | = \frac{a_{q}^{1+ \epsilon}}{3} \right  \} $$
which is well defined because the trajectories of both the flow and the stochastic flow are continuous in time. From the result in the first part of the step we see that $\tau \geq 1 - T_{\overline{q}}$. We need to show that $\tau \geq 1- T_q$. If this would not be the case, we would have
\begin{align}
| \XX_{\tau}^{\kappa_q} (x, \omega) - \XX_{\tau}^q (x) | 
 & \leq \left| \XX_{1-T_{\overline{q}}}^{\kappa_q} (x, \omega) - \XX_{1-T_{\overline{q}}}^q (x) \right| +
{\left|\int_{1- T_{\overline{q}} }^{\tau} \left ( u(\XX^{\kappa_q}_s (x, \omega)) - u_q(\XX^{q}_s (x )) \right ) ds \right|} \label{inutile:integ}
 \\
& \quad +  \sqrt{2 \kappa_q} |\WW_{\tau}(\omega) - \WW_{1-T_{\overline{q}}} (\omega)| \notag
\\
& \leq  \frac{a_q^{1 + \epsilon}}{6} + 2 a_q^{1+ \frac{ 5 \epsilon}{4}}  < \frac{a_q^{1+ \epsilon}}{3}, \notag
 \end{align}
where we used the definition of $D_q$ in~\eqref{d:A_q} and the definition of $\tau$ and  property \eqref{c:vectorfield_constant_locally}  to conclude that the integrand in \eqref{inutile:integ} is $0$; we also used \eqref{omega_q} in the second-to-last inequality. Therefore, we conclude that~$\tau \geq 1- T_q$.
% to conclude $|  \int_{1- T_{\overline{q}} }^{\tau} u(\XX^{\kappa_q}_s (x, \omega)) ds -  \int_{1- T_{\overline{q}} }^{\tau} u_q(\XX^{q}_s (x )) ds| =0.$

\medskip

\noindent{\bf Step 2: Shortness of the trajectories of $\XX^{\kappa_q}$ in the time interval $[1-T_q, 1+T_q]$.} {\sl  We show that there exists a set $\Omega_{q,2} \subset \Omega$ with $\mathbb{P}(\Omega_{q,2}) \geq 1 - a_q^\epsilon $
such that 
$$
\left | \int_{1- T_q}^{1 + T_q} u(s, \XX^{\kappa_q}_s (x, \omega))  ds \right | \leq \frac{a_q^{1+ \epsilon}}{6}
\qquad \text{{for  $x \in \T^2$ and} $\omega \in \Omega_{q,2}$.}
$$
In particular, by Step~1 and using~\eqref{omega_q}, we have for $t \leq 1+T_q$
  \begin{align*}
 | \XX^{\kappa_q}_t (x, \omega) - \XX^{q}_t (x)| & \leq | \XX^{\kappa_q}_{1-T_q} (x, \omega) - \XX^{q}_{1-T_q} (x)| + \left | \int_{1- T_q}^{1 + T_q} u(s, \XX^{\kappa_q}_s (x, \omega))  ds \right | 
   + \sqrt{2 \kappa_q} | \WW_{t} (\omega) - \WW_{1-T_q}(\omega)| \\
   & \leq \frac{2 a_q^{1+ \epsilon}}{3}
   \qquad \text{for any $x \in D_q$ and $\omega \in \Omega_{q,1} \cap \Omega_{q,2}$.}
  \end{align*}  
  }
  
  \medskip
  
To this aim, we set $ \Omega_{q,2} =  \Omega_{q,2}^{1} \cap \Omega_{q,2}^{2}$,  where 
$$
  \Omega_{q,2}^1 = 
  \left  \{ \omega : \left | \int_{1-T_{q}}^{1- T_{q+1}} u(s, \XX_s^{\kappa_q}(x, \omega)) ds \right | \leq  \frac{a_{q}^{1+ \epsilon }}{18} 
  \mbox{ for any } x \in \T^2 \right  \}
$$
and
$$
     \Omega_{q,2}^2 = 
  \left  \{ \omega : \left | \int_{1 + T_{q+1}}^{1 + T_{q}} u(s, \XX_s^{\kappa_q}(x, \omega)) ds \right | \leq  \frac{a_{q}^{1+ \epsilon }}{18} \mbox{ for any } x \in \T^2 \right  \} \,.
$$
By the estimates in Remark~\ref{remark:vectorfield} and the choice of the parameters
$$
\left | \int_{1 - T_{q+1}}^{1 + T_{q+1}} u(s, \XX_s^{\kappa_q}(x, \omega)) ds \right | \leq  \|u \|_{L^\infty ((1-T_{q+1}, 1+ T_{q+1}) \times \T^2)}  \leq 2 a_{q+1}^{1- \gamma} \leq a_q^{1+ \frac{\delta}{2}} \leq \frac{a_q^{1+ \epsilon}}{18},
\quad \forall x \in \T^2 \,, \ \forall \omega \in \Omega
$$
therefore it just remains to prove the estimate on $\mathbb{P}(\Omega_{q,2})$. 
We only estimate $\mathbb{P}(\Omega_{q,2}^1)$ since the estimate for~$\Omega_{q,2}^2$ is identical. 

We recall that on the time interval $[1- T_q, 1-T_{q+1}]$ the velocity field is nonzero only for times in $\mathcal{I}_{q,2}$ and $\mathcal{I}_{q,3}$.
For the time interval $\mathcal{I}_{q,3}$, using \eqref{s:vectorfield_I_q5} and the choice of the parameters we have that 
$$  
\left | \int_{\mathcal{I}_{q,3}} u(s, \XX_s^{\kappa_q} (x, \omega)) ds \right | \leq \| u \|_{L^\infty (\mathcal{I}_{q,3} \times \T^2)} \leq 2 a_{q+1} a_q^{- \gamma} \leq \frac{a_{q}^{1 + \epsilon}}{36} \qquad \forall x \in \T^2 \,, \ \forall \omega \in \Omega \,.
$$
 
We now estimate the integral on the time interval $\mathcal{I}_{q,2}$ using the so-called It\^o-Tanaka trick. For shortness of notation we denote
$$
\tilde{u}(t, x ) = u( 1-T_{q} + \overline{t}_q+ t_q + t, x  )
\qquad\text{ and }\qquad
\widetilde{\XX}_t = \XX_{1-T_{q} + \overline{t}_q+  t_q +t}^{\kappa_q}
\qquad \text{for all $t \in [0, t_q]$.}
$$
We apply It\^o formula \eqref{eq:ito} to the stochastic flow $\widetilde{\XX}_t$ choosing $f: [0,t_q] \times \T^2 \to \R$ of the form $f(t, x_1, x_2) = \tilde{\eta}_{q,2} (t) g(x_2)$, where $\tilde{\eta}_{q,2}(t) = \eta_{q,2} ( 1- T_q + \overline{t}_q + t_q + t) $ and   $g$ is the solution to 
\begin{align*}
\begin{cases}
g '' (y)= \tilde{w}_{q+1,2} (y), 
\\
g(0)=g(1)=0.
\end{cases}
\end{align*}
In the last equation, $\tilde{w}_{q+1,2} : \T \to \R$ is defined by $w_{q+1,2} \star \psi_{q+1} (x_1,x_2)= ( \tilde{w}_{q+1,2}(x_2)  , 0)$, $\T$ is the one-dimensional torus, and $w_{q+1,2}$, $\psi_{q+1}$ and $\eta_{q,2}$ are defined in Section \ref{section:vectorfield}. 
The function $f$ enjoys the following estimates
 %$u$ is ``almost'' $a_{q+1}$-periodic from the construction in Section \ref{section:vectorfield}, therefore the function $f$ enjoys the following estimates %(as it can be directly checked by finding the primitive of a one dimensional function)
\begin{align} \label{eq:stima_f}
\| f \|_{L^\infty([0,t_q] \times \T^2)} \leq  4 a_{q+1} a_{q}^{2- \gamma},  \ \ \ \ \  \| f \|_{L^\infty ([0,t_q] ; C^1(\T^2))} \leq 4 a_{q +1} a_q^{1- \gamma} \ \ \ \ \  \| f \|_{L^\infty (\T^2 ; C^1((0, t_q)))} \leq  a_{q+1} a_{q}^{2- 3 \gamma}.
\end{align}
Indeed, the time regularity directly follows from~\eqref{d:convolution_time}. In order show the spatial regularity we rely on the ``almost'' $a_{q+1}$-periodicity of ${w}_{q+1,2}$ (recall the construction in Section~\ref{section:vectorfield}). To this extent, we notice that $\tilde{w}_{q+1,2} (y + a_{q+1}) = - \tilde{w}_{q+1,2} (y)$ for any $y \in \bigcup_{i \in L} (i a_{q+1} , (i+1) a_{q+1})$, where we denote $L= \bigcup_{r=0}^2  \left \{ k \in \N : k \neq n \frac{a_{q}}{a_{q+1}} -r  \mbox{ for any } n \in \N \right \} $, while $\tilde{w}_{q+1,2} (y + a_q) = - \tilde{w}_{q+1,2} (y)$  for any $y \in \T$ and~$\int_{i a_q}^{(i+1) a_q} \tilde{w}_{q+1,2} (s) ds =0 $ for any $i \in \N$. Hence, setting $g'(y) = \int_0^y \tilde{w}_{q+1, 2}(z) dz$, we deduce 
$$| g'(y) | = \left | \int_0^y \tilde{w}_{q+1, 2}(z) dz \right | \leq  \int_{- 2 a_{q+1}}^{ 2 a_{q+1}} | \tilde{w}_{q+1, 2}(z) | dz  \leq  4 a_{q+1} a_q^{1- \gamma} $$
and  $g'(y ) = - g'(y + a_q)$  for any $y \in \T$. From the last property and the parity of $a_q^{-1}$ we also deduce that $g(y) = \int_0^y g'(s) ds$ satisfies $\|  g \|_{L^\infty} \leq 4 a_{q+1} a_q^{2- \gamma}$ and $g(0) = g(1)=0$.

Applying It\^o formula \eqref{eq:ito}  with the function $f: [0,t_q] \times \T^2 \to \R$ observing that $\Delta f (t, x_1, x_2) = \partial_{x_2}^2  f (t, x_1,  x_2) = \tilde{u}^1 (t,  x_1, x_2)$, where $\tilde u (t, x_1, x_2) = ( \tilde{u}^1 (t, x_1,  x_2) , 0)$,
%\footnote{For the convenience of the reader we remark that the derivatives are taken in the spacial variables.} 
we get
%\begin{align*} 
%f(X_{t_q}) - f(X_0) & = \int_0^{t_q} \partial_t f(s, X_s)  ds + \int_0^{t_q} \nabla f(s, X_s) \tilde u (s, X_s) ds  \notag
%\\ & \quad + \sqrt{2 \kappa_q} \int_{0}^{t_q} \nabla f (s, X_s) d \WW_s  + \kappa_q \int_{0}^{t_q}  \Delta f (X_s) ds.
%\end{align*}
\begin{align} 
 \left |  \int_{0}^{t_q}  \tilde{u} (s, \widetilde{\XX}_s) ds \right  |   & =  \left |  \int_{0}^{t_q}  \tilde{u}^1 (s, \widetilde{\XX}_s) ds \right  |  \notag
 \\
&   \leq   \frac{1}{\kappa_q} \bigg | f( t_q, \widetilde{\XX}_{t_q}) - f(0, \widetilde{\XX}_0) \bigg | + \bigg | \frac{1}{\kappa_q} \int_0^{t_q} \partial_t f(s, \widetilde{\XX}_s)  ds \bigg |  \label{e:tricksum}
 \\
 & \quad + \bigg | \frac{1}{\kappa_q} \int_{0}^{t_q} \nabla f (s, \widetilde{\XX}_s) \cdot \tilde{u} (s, \widetilde{\XX}_s) ds  \bigg |   
  + \bigg | \sqrt{2 \kappa_q}  \int_{0}^{t_q} \nabla f(s, \widetilde{\XX}_s) \cdot d \WW_s \bigg ) \bigg | \notag
 \\
&  \leq 
\frac{1}{\kappa_q}  \left |   f(t_q, \widetilde{\XX}_{t_q}) - f(0 , \widetilde{\XX}_0) \right | 
 + \frac{1}{\kappa_q} \left | \int_0^{t_q} \partial_t f(s,  \widetilde{\XX}_s)  ds \right |  
 + \frac{1}{\kappa_q} \left |  \sqrt{2 \kappa_q} \int_{0}^{t_q}  \nabla f(s, \widetilde{\XX}_s) \cdot d \WW_s  \right | \notag
\end{align}
since $\nabla f \cdot \tilde u =0$. We now estimate all the terms in the sum: using~\eqref{eq:stima_f},~\eqref{s:vectorfield_I_q3} and ~$t_q = a_q^\gamma$ we get
% from \eqref{eq:stima_f} and the choice of parameter in Section \ref{section:choiceparameters} $6 a_{q+ 1 } a_q^{1- 2 \gamma} \leq \kappa_q $ and the property $\| u \|_{L^\infty (\mathcal{I}_{q,2} \times \T^2)} \leq a_{q}^{1- \gamma}$ we get
%and $a_{2q+1} a_{2q}^{-1} \leq a_{2q}^{1+ \beta}$
\begin{equation} \label{stima:itotanaka1}
\frac{1}{\kappa_q} \left |   f(t_q, \widetilde{\XX}_{t_{q}}) - f(0, \widetilde{\XX}_0) \right |  \leq \frac{2 \| f \|_{L^\infty}}{\kappa_q} \leq  \frac{8 a_{q+1} a_{q}^{2 - \gamma}}{\kappa_q} \leq  \frac{a_{q}^{1 + \epsilon}}{108} 
\end{equation}
and
\begin{equation}\label{stima:itotanaka2}
\frac{1}{\kappa_q} \left | \int_0^{t_q} \partial_t f(s, \widetilde{\XX}_s)  ds \right | \leq  \frac{t_q \| f \|_{L^\infty (\T^2 ; C^1((0, t_q)))}}{\kappa_q} \leq \frac{a_{q+1} a_q^{2- 2 \gamma}}{\kappa_q} \leq  \frac{a_q^{1 + \epsilon}}{108} 
\end{equation}
for any $x \in \T^2$ and $\omega \in \Omega$,
%\frac{1}{\kappa_q} \left | \int_{0}^{t_q} \nabla f (s, \widetilde{\XX}_s) \tilde{u}(X_s) ds   \right | \leq \frac{t_q \| f \|_{L^\infty ([0,t_q] ; C^1(\T^2))} \|\tilde{u} \|_{L^\infty((0, t_q) \times \T^2)} }{\kappa_q}  \leq   \frac{   4 a_{q+1} a_{q}^{2- \gamma} }{\kappa_q} \leq \frac{a_{q}^{1 + \epsilon}}{144} \label{eq:seguire2}\,,
since our choice of the parameters guarantees $2 \gamma + 5 \epsilon \leq \delta$ and $108 a_q^\epsilon \leq 1/4$.
 
For the last term in the sum~\eqref{e:tricksum} we use the It\^o isometry  %characterization of the quadratic variation process  of~$t \mapsto \int_0^{t}  \frac{\sqrt{2}}{\sqrt{\kappa_q}}  \nabla f (X_s) d \WW_s   $ 
in \eqref{quadratic_variation} and using also~\eqref{eq:stima_f}  we conclude
 \begin{align*}
    \mathbb{E} \left [ \left | \int_0^{t_q}  \frac{\sqrt{2}}{\sqrt{\kappa_q}}  \nabla f (s, \widetilde{\XX}_s) d \WW_s \right |^2 \right ]^{1/2}
    = 
    \mathbb{E} \left [ \int_0^{t_q} \left  |\frac{\sqrt{2}}{\sqrt{\kappa_q}}  \nabla f(s, \widetilde{\XX}_s) \right  |^2 ds   \right ]^{1/2}
  \leq \frac{8 a_{q+1} a_q^{1- \gamma} a_{q}^{\gamma/2}}{\sqrt{\kappa_q}}.
 \end{align*}
%where $C>0$ is a constant given by the Burkholder inequality.
Exploiting the Cauchy-Schwarz inequality and 
%
%to estimate $\mathbb{E} \left [ \left | \int_0^{t_q}  \frac{\sqrt{2}}{\sqrt{\kappa_q}}  \nabla f (X_s) d \WW_s \right | \right ]  $ 
 the Markov inequality we get
\begin{align}
\mathbb{P} \left (\omega : \left | \int_0^{t_q}  \frac{\sqrt{2}}{\sqrt{\kappa_q}}  \nabla f (s, \widetilde{\XX}_s) \cdot d \WW_s \right | \geq \frac{a_{q}^{1 + \epsilon}}{108} \right ) \leq
\frac{8 \cdot 108 a_{q+1} a_{q}^{1 - \gamma/2}}{\sqrt{\kappa_q} a_{q}^{1 + \epsilon}}  \leq a_{q}^{ \epsilon}/2, \label{eq:seguire3}
\end{align}
%\textcolor{red}{rimpiazza $\sigma$ con $\gamma_q$.}
%\textcolor{red}{qui ho usato che $\sigma \leq a_{2q}^4$}
since  the choice of parameters guarantees $\delta \geq \sfrac{ \gamma}{2} + 4 \epsilon$ and $16 \cdot 108 a_q^{\sfrac{\epsilon}{2}} \leq 1$.

Using \eqref{e:tricksum}, \eqref{stima:itotanaka1} and \eqref{stima:itotanaka2} we have
$$(\Omega_{q,2}^1)^c  \subset  \left \{ \omega : \left | \int_0^{t_q}  \frac{\sqrt{2}}{\sqrt{\kappa_q}}  \nabla f (s, \widetilde{\XX}_s) \cdot d \WW_s \right | \geq \frac{a_{q}^{1 + \epsilon}}{108} \right \}    $$
and thanks to \eqref{eq:seguire3} we get the estimate on $\Omega_{q,2}^1$.

%{\color{red} redefine $d_0$?}
%for any $q \geq \overline{q}$ such that $12 a_{\overline{q}}^\gamma C \leq 1$, where $C$ is the constant in the Burkholder inequality.

\medskip

\noindent{\bf Step~3: Closeness of $\XX^{\kappa_q}_t$ and $\XX^q_t$ for $t \in [1+T_q, 2]$.} {\sl  We claim that for all $0 \leq t \leq 2$
$$
|\XX_{t}^{\kappa_q} (x, \omega) - \XX_t^q (x) | \leq  a_{q}^{1+ \frac{\epsilon}{2}} 
\qquad
\text{for $x \in D_q \cap \tilde{D}_q$ and $\omega \in \Omega_{q,1} \cap \Omega_{q,2}$\,,}
$$
for some $\tilde{D}_q$ with $\mathcal{L}^2(\tilde D_q^c) \leq  48 a_{\overline{q}}^{\epsilon \delta}$, where $\bar q = \bar q(q)$ is as in~\eqref{c:j_q}. 
}
%
%
%
%$$
%\mathcal{L}^2(\tilde D_q) \leq \sum_{k=\overline{q}}^{q-1} \mathcal{L}^2 (( G_{k+1}[a_q^{1+ \epsilon}])^c) \leq  16 a_{\overline{q}}^{\epsilon \delta}\,.$$
%}
% {\color{blue} insert here the estimate on the measure} } {\color{red} fatto}

\medskip
Indeed, for $t \leq 1 + T_q$, the claim follows from Step~1 and Step~2. For any $ 1+ T_q \leq t \leq 1+T_{\overline{q}}$,  arguing as in Step~1 (compare in particular with \eqref{eq:gronwall_step1}), we see that 
  \begin{align*}
 | \XX_{t}^{\kappa_q} & (x, \omega) - \XX_{t}^q (x) |  \\
\leq & \; | \XX_{1+T_{{q}}}^{\kappa_q} (x, \omega) - \XX_{1+ T_{{q}}}^q (x) | 
   +
\left|  \int_{1+ T_{{q}} }^{t} \Big[ u(\XX^{\kappa_q}_s (x, \omega)) - %ds   \int_{1+ T_{{q}} }^{t} 
u_q(\XX^{q}_s (x )) \Big] ds \right| 
+|\WW_{t}(\omega) - \WW_{1+T_{q}} (\omega)|
\\
\leq & \; \frac{2 a_q^{1 + \epsilon}}{3} + 2 a_q^{1+ \frac{ 5 \epsilon}{4}}  < a_q^{1+ \epsilon}\,,
\end{align*}   
for any $x \in  D_q \cap \tilde{D}_q $ and  $\omega \in \Omega_{q,1} \cap \Omega_{q,2}$, where 
%similarly to $D_q$ (compare to~\eqref{d:A_q}), more precisely 
$$
\tilde{D}_q := \bigcap_{k=\overline{q} +1}^{q} \tilde{D}_{q,k}  := \bigcap_{k=\overline{q} +1}^{q} \{ x \in \T^2 : \XX^{q}_{1+ T_k} (x) \in G_{k}[a_q^{1+ \epsilon}] \}\,,
$$
which has the same  estimate as $D_q$ obtained  in \eqref{estimate:A_q}.
%to conclude that the second piece in the previous inequalities is zero for $t \leq 1 + T_{\overline{q}}$ thanks to property \eqref{remark:locallyconstant:diffusion} in Remark \ref{remark:vectorfield}. We observe also that $\mathcal{L}^2 (\tilde{A}_q) \geq $ {\color{red} riempi}. 
%
%
Finally by applying~\eqref{l:gronwall}  we conclude that for any $1 + T_{\overline{q}} \leq t \leq 2$, $x \in D_q \cap \tilde{D}_q$, $\omega \in \Omega_{q,1} \cap \Omega_{q,2}$
\begin{align*}
| \XX_{t}^{\kappa_q} (x, \omega) - \XX_{t}^q (x) | & \leq  \left ( |\XX_{1+T_{\overline{q}}}^{\kappa_q} - \XX_{1+T_{\overline{q}}}^{q} | + 2 \sqrt{2 \kappa_q} \sup_{s \in [0,t]} | \WW_s(\omega)| \right) \exp \left( \int_{1+ T_{\overline{q}}}^t \| \nabla u(s,\cdot) \|_{L^\infty} ds \right) 
\\
& \leq (a_q^{1+ \epsilon} + 2 a_q^{1 + \frac{5 \epsilon}{4}}) \exp (a_{\overline{q}}^{- \gamma - \delta - 2 \epsilon \delta}) \leq a_q^{1 + \frac{\epsilon}{2}}.
\end{align*}  

\medskip

\noindent{\bf Step~4: Convergence to a solution that conserves the $L^2$ norm. } {\sl 
%
% 
% Let $\{ \vartheta_{\kappa_{q}}\}_q$ be the sequence of solutions to \eqref{e:intro:advdiff} with initial datum $\vartheta_{\initial}$ (defined in \eqref{initialdatum}), velocity field $u$ and diffusion parameter $\kappa_q$ and $\{ \vartheta_q \}_q$ be the sequence of solutions
% to \eqref{e:intro:advection} with initial datum $\vartheta_{\initial}$ and velocity fields $u_q = u \mathbbm{1}_{[1-T_q, 1+T_q]^c} $.
% Then, we have that
We show that
$
\vartheta_{\kappa_q} - \vartheta_q \to 0
$
{ as $q \to \infty$} 
in the sense of distributions and that
%. Furthermore,  the sequence  $\vartheta_q$ is such that 
$
 \| \vartheta_q - \vartheta \|_{L^1((0,2)\times\T^2)} \to 0
$
{as $q \to \infty$},
where $\vartheta $ is a solution of the advection-diffusion equation~\eqref{e:intro:advection} which conserves the $L^2$ norm.
% with velocity field $u$ and initial datum \eqref{initialdatum}, which 
This implies that $\vartheta_{\kappa_q} $ converges in the sense of distributions to the solution $\vartheta$.}

\medskip

By the representation formulas for $\vartheta_{\kappa_{q}}$ (recall Theorem~\ref{prop:representation}) and for $\vartheta_q$,
%$$
%\int_{\T^2} f( x) \vartheta_{\kappa_{q}}(t,x ) dx  =  \mathbb{E} \int_{\T^2} f(\XX_t^{\kappa_q}(x, \omega)) \vartheta_{\initial}(x) dx  
%$$
%and
%$$
%\int_{\T^2} f( x) \vartheta_{{q}}(t,x ) dx  = \int_{\T^2} f(\XX_t^{q}(x)) \vartheta_{\initial}(x) dx \,, 
%$$ 
%from which 
we deduce
\begin{align*}
\left  | \int_{\T^2} f( x) (\vartheta_{\kappa_{q}}(t,x ) - \vartheta_q(t,x))  dx \right  | & = \left | \mathbb{E} \int_{\T^2} (f(\XX_t^{\kappa_q}(x, \omega)) - f(\XX_t^{q}(x)) )  \vartheta_{\initial}(x) dx \right  |  
 \\
 & \leq \| \nabla f \|_{L^\infty}  \mathbb{E} \int_{\T^2} |\XX_t^{\kappa_q}(x, \omega)) -  \XX_t^{q}(x))  |dx \to 0,
\end{align*} 
where the convergence to zero for $q\to\infty$ holds uniformly in time by Steps~1--3, since $\mathcal{L}^2 (D_q \cap \tilde{D}_q) \to 1$ and $\mathbb{P} (\Omega_{q,1} \cap \Omega_{q,2}) \to 1$. This shows the convergence $\vartheta_{\kappa_q} - \vartheta_q \to 0$ in the sense of distributions. 

The fact that $\{ \vartheta_q \}_q$ is a Cauchy sequence in $L^1$ follows by the definition of the velocity field by reflection, namely $u(t,x) =- u(2-t, x)$, for $1<t<2$, which in turn implies $\vartheta_{q+1} (t,x) = \vartheta_{q}(t,x)$ for any  $x \in \T^2$ and~$t \in [1-T_q, 1+T_q]^c$. Since $u_q$ is smooth $\vartheta_q$ conserves the $L^2$ norm for any $q$. Therefore, as $q \to \infty$ the sequence $\vartheta_q$ converges to a solution $\vartheta$ of the advection equation~\eqref{e:intro:advection} which coincides with $\vartheta_q$ on $ [1-T_q, 1+T_q]^c \times \T^2$ and satisfies
$$ \| \vartheta (t, \cdot) \|_{L^2 (\T^2)} = \| \vartheta_{\initial} \|_{L^2 (\T^2)} \qquad \mbox{ for any } t \in [0,2] \setminus \{ 1 \} \, .  \qquad \text{\qedsymbol} $$ 

%
%
% and the definition of $u_q(t,x) = u(t,x) \mathbbm{1}_{[1-T_q, 1+T_q]^c} (t)$ we have  $\vartheta_{q+1} (t,x) = \vartheta_{q}(t,x)$ for any  $x \in \T^2$ and $t \in [1-T_q, 1+T_q]^c$. 
% 
% 
% Since $T_q \to 0$ as $q \to \infty$ we get that $\vartheta_q$ is a Cauchy sequence in $L^1_{t,x}$. Furthermore we have that $\| u_q - u \|_{L^\infty_{t,x}} \to 0$ because $\| u \|_{L^\infty((1-T_q, 1+ T_q) \times \T^2 )} \leq a_q^{1- \gamma} \to 0$ as $q \to \infty$.
%
%Therefore $\vartheta  $ is a bounded conservative solution of the advection  equation \eqref{e:intro:advection} with initial datum \eqref{initialdatum} and with respect to the  velocity field $u$; more precisely the conservative property  follows from $\vartheta(t,x) = \vartheta_q(t,x)$ for any $x \in \T^2$ and $t \in [1-T_q, 1+T_q]^c$  and the fact that $\vartheta_q$ conserves the $L^2$ norm because it is a solution to the advection equation with a smooth velocity field $u_q$.

\section{Convergence to a solution which dissipates the $L^2$ norm in Theorem~\ref{t_main_OC} and Theorem~\ref{t_mainadvection}} \label{section:nonconservative}

Let us fix $\alpha + 2 \beta <1$ in the case of Theorem~\ref{t_main_OC} and $\alpha <1$ and $\beta =0$ in the case of Theorem~\ref{t_mainadvection}. 
%The velocity field $u$ has been constructed in Section~\ref{section:vectorfield} with the choices of the parameters in Section~\ref{ss:para} and the initial datum $\vartheta_{\initial}$ is defined in~\eqref{initialdatum}.
%
%We recall, for the convenience of the reader, the sequence of parameters defined in Section \ref{section:choiceparameters} (used also to construct the velocity field $u$ depending on the parameters $\alpha$ and $\beta$).
%and Section \ref{section:choice_par_AD} for Theorem \ref{t_main_OC} and Theorem \ref{t_mainadvection}. We highlight that for Theorem \ref{t_mainadvection} we have $\beta =0$ and $\alpha <1$ fixed as in the statement, whereas for Theorem \ref{t_main_OC}  $\alpha$ and $\beta$ are fixed in the statement to satisfy $\alpha + 2 \beta <1$. Therefore, to be precise, we use in this proof several velocity fields, depending on the parameters $\alpha$ and $\beta$, which affect the choice of parameters in Section \ref{section:choiceparameters}, but the following proof works for any $\alpha + 2 \beta <1$.
We recall for the convenience of the reader the choice of parameters defined  in Section~\ref{ss:para}:
$$
\gamma  = \frac{p^\circ \beta (1 + 3\epsilon (1 + \delta)) (1 + \delta ) }{1 -2 \delta} + \frac{ \delta}{8}\,, \qquad
a_0^{\sfrac{\epsilon \delta}{8} } \leq \frac{1}{20} \,, \qquad
\epsilon \leq \frac{\delta^3}{ 50} \,, \qquad m -1 \geq \frac{16}{\delta^2}.
$$
The parameters are assumed to satisfy~\eqref{c:beta_eps_condition_all} depending on the choice of~$\alpha$ and~$\beta$ and  the diffusivity parameter for the convergence to a  solution which dissipates the $L^2$ norm has been set to
$$
 \tilde{\kappa}_q = a_q^{2 - \frac{\gamma}{1 + \delta} + 4 \epsilon} \,.
$$
We set $a_{q+1} =a_q^{1+ \delta}$ and the length of the time intervals $\mathcal{I}_{q,j}$ is $a_q^{\gamma - \gamma \delta}$ (for $q \in m\N$  and $j=0$), $0$ (for~$q \not\in m\N$ and $j=0$) and $a_q^\gamma$ otherwise.

We consider the initial datum $\vartheta_{\initial}$ defined in~\eqref{initialdatum}  and the velocity field $u$ constructed in Section~\ref{section:vectorfield} for times $0 \leq t \leq 1$ and extended for times~$1 \leq t \leq 2$ by formula \eqref{vectorfield_VV}. For the regularity of the velocity field in the case $\beta =0$ see Remark \ref{remark:vectorfield}, whereas in the general setting of Theorem \ref{t_main_OC}  see Section \ref{section:regularity}. 

 We let~$\vartheta_{\tilde \kappa_q}$ be the unique bounded solution of the advection-diffusion equation~\eqref{e:intro:advdiff} with velocity field~$u$, initial datum~$\vartheta_{\initial}$, and diffusivity $\tilde \kappa_q$. 
%To understand the better behaviour of $\vartheta_{\kappa_q}$ we introduce  the (unique) stochastic Lagrangian flows  (see Definition \ref{d:sLf})  $ \XX^{\kappa_q}$  with respect to the velocity field $u$ and with diffusion parameter $\kappa = \kappa_q$ which is strictly related to it because of the Feynman Kac formula \eqref{feynman_kac}; most of the proof is devoted to study the stochastic Lagrangian flow $\XX^{\kappa_q}$ depending on the diffusion parameter $\kappa_q$ in the equation \eqref{d:sdee}.
%From the properties of the stochastic Lagrangian flow $ \XX^{\kappa_q}$ we will deduce the conservation of the $L^2$ norm on $\vartheta_{\kappa_q}$ thanks to the representation formula given in Lemma \ref{prop:representation}.
 %e split the proof in {\color{red} !!! numero steps}.
 
We split the proof in two steps. In the first step we show that the stochastic flow $\XX^{\tilde{\kappa}_q}$ satisfies
%with diffusion parameters $\tilde \kappa_q$ and velocity field $u$ are such that 
\begin{align*}
(x, \omega) \in \mathcal{O}_q \text{ and } x \in A_0[a_0^{1+ \epsilon}]  \ \ \Longrightarrow \ \   \XX^{\tilde \kappa_q}_{1-T_q} (x, \omega) \in  A_q
\\
(x, \omega) \in \mathcal{O}_q \text{ and } x \in B_0[a_0^{1+ \epsilon}]  \ \ \Longrightarrow \ \   \XX^{\tilde \kappa_q}_{1-T_q} (x, \omega) \in  B_q
\end{align*}
for a suitable set $\mathcal{O}_q \subset \T^2 \times \Omega$  of large measure uniformly for $q \in m \N$.
% with $(\mathcal{L}^2 \otimes \mathbb{P}) (\mathcal{O}_q) \geq 1 - a_0^{\epsilon \delta /2}$ (for this estimate we need that $q$ is a multiple of $m$).
 In the second step we deduce that the solution $\vartheta_{\tilde \kappa_q}$ dissipates a large portion of its energy in the time interval $[1-T_q, 1-T_q + \overline{t}_q]$ for $q\in m\N$, more precisely we will show that
$$ 
2 \tilde \kappa_q \int_{1-T_q}^{1-T_q + \overline{t}_q} \int_{\T^2} | \nabla \vartheta_{\tilde \kappa_q} (s,x) |^2 dx ds \geq \frac{1}{2}.
$$
%{\color{red} I think a $\kappa$ was missing on the LHS above}
Since any weak limit of $\vartheta_{\tilde \kappa_q}$ is a solution of the advection equation, this is enough to conclude the proof.

\begin{comment}
{\color{blue} 
\begin{remark}
Observe that one of the main point where the choice of $ \tilde \kappa_q$ defined to converge to a dissipative solution differs from the choice of $\kappa_q$ defined in \eqref{parameters:choice_conservative} to converge to a conservative solution is the estimate \eqref{d:AD_second} which holds for $\tilde{\kappa}_q$ but does not hold for $\kappa_q$.  The inequality \eqref{d:AD_second} is the one which will allows  the anomalous dissipation in the time interval $[1-T_q, 1- T_q + \overline{t}_q]$ for $q$ being an even number. ** Should we keep the remark, or should this be rather moved to the heuristic section? ** 
\end{remark}
}
{\color{red} il remark \`e gia spiegato nell heuristics...toglierei direttamente il remark}
\end{comment}

%$\mathbb{P}(\Omega_{k}) \geq 1 - 2 e^{- a_{k}^{- \epsilon}}$, thanks to $a_{k}^{2 + 2 \delta + 2 \epsilon + 2 \epsilon \delta }$
 % to have that $\mathbb{P}(\Omega_{k}) \geq 1 - 2 e^{- a_{q-1}^{- \epsilon}}$ for any $k \leq q-2$ and for $k=q-1$ we used the definition of $\gamma \geq \delta/8$ and \eqref{c:eps_delta}  to have $\frac{\gamma \delta}{1+ \delta} \geq \epsilon$ and conclude that  $\mathbb{P}(\Omega_{q-1}) \geq 1 - 2 e^{-a_q^{-2 \epsilon}}$.

\medskip

\noindent{\bf Step~1: $\XX^{\tilde \kappa_q}$ approximately preserves the chessboards for $t \in [0,1-T_q]$.  } {\sl 
% We recall the notation in~\eqref{d:G_sets} for the sets $A_q$ and $B_q$.
 We claim that there exist $\Omega_q \subset \Omega$ and ${H}_q \subset \T^2 \times \Omega$ with $\mathbb{P}( \Omega_{q}) \geq  1 -  a_0^{\epsilon } $ for any $q \in m \N$ and $(\mathcal{L}^2 \otimes \mathbb{P}) (H_q) \geq 1 - 24 a_0^{\epsilon \delta} $ for any~$q \in \N$ and such that 
 \begin{subequations}
  \begin{align} 
  (x, \omega) \in H_q \cap  (A_0[a_0^{1+ \epsilon}]  \times \Omega_q)  \ \ & \Longrightarrow \ \  \XX^{\tilde \kappa_q}_{1-T_q} (x, \omega) \in A_q \label{implication:step1_diff}
  \\
   (x, \omega) \in H_q \cap  (B_0[a_0^{1+ \epsilon}]  \times \Omega_q)   \ \ & \Longrightarrow \ \  \XX^{\tilde \kappa_q}_{1-T_q} (x, \omega) \in B_q. \label{implication:step1_diff2}
  \end{align}
  \end{subequations}
}
Let us define 
\begin{align} \label{omega_W}
 \Omega_{q} = \bigcap_{k=-1}^{q-1} \Omega_{q, k} = \bigcap_{k=-1}^{q-1} \left \{ \omega \in \Omega : \sup_{t \in [1-T_{k}, 1-T_{k+1}]}  \sqrt{2 \tilde \kappa_q} | \WW_t - \WW_{1-T_{k}} | \leq a_{k+1}^{1+ \epsilon} \right \},
\end{align}
where $T_{-1} =0$. 
In order to prove that  $\mathbb{P}( \Omega_{q}) \geq  1 -  a_0^{\epsilon }$,
it is enough to show that  $\mathbb{P}(\Omega_{q, k}) \geq 1 - 2 e^{- a_{q}^{- \epsilon}}$ for any $k \leq q-1$.  To this aim we apply \eqref{stima:brownian} with $\tilde{T}= 1-T_k$, $T= 1- T_{k+1}$ and $c= a_{k+1}^{1+ \epsilon}$. For $k \leq q -m $ we use \eqref{d:parameters} and $\gamma \geq \sfrac{\delta}{8} \geq \sfrac{2}{\delta(m-1)}$ to estimate
$$ \dfrac{a_{k+1}^{2+ 2 \epsilon }}{  2 \tilde{\kappa}_{q} (T_{k} - T_{k+1}) } \geq a_k^{2 - \gamma+ 2 \delta  + \gamma \delta} a_q^{-2 + \gamma - \frac{\gamma \delta}{1+ \delta} - \epsilon} \geq a_q^{- \epsilon} a_k^{2 \delta + \gamma \delta} a_q^{ - \frac{\gamma \delta }{1+ \delta}} \geq a_q^{- \epsilon} a_k^{2 \delta + \gamma \delta - \gamma \delta (1 + (m-1) \delta)} \geq  a_q^{- \epsilon}.$$
For $ q-m < k \leq q-1$ we simply use that $T_k - T_{k+1} \leq 3 a_q^\gamma$ and~\eqref{d:AD_first}. 
 
We also  define
\begin{align} \label{d:H_q}
H_q := \bigcap_{k=0}^{q-1} H_{q,k}  := \bigcap_{k=0}^{q-1} \left \{ (x, \omega) \in \T^2 \times \Omega: \XX^{\tilde \kappa_q}_{1- T_k} (x, \omega) \in G_{k+1}[a_{k+1}^{1+ \epsilon}] \right \} 
\end{align}
and $H_{q, -1} = \emptyset$.
We only prove \eqref{implication:step1_diff},  as the proof of \eqref{implication:step1_diff2} is similar.  
Fix $(x, \omega) \in H_q  \cap (A_0[a_0^{1+ \epsilon}]  \times \Omega_q) $. We prove \eqref{implication:step1_diff} by showing by iteration on $-1 \leq k \leq q-1$ that 
\begin{align} \label{iteration:step1}
\XX^{\tilde \kappa_q}_{1-T_{k}} (x, \omega) \in A_{k} \ \ \Longrightarrow \ \  \XX^{\tilde \kappa_q}_{1-T_{k+1}} (x, \omega) \in A_{k+1},
\end{align}
where $A_{-1} = A_0$.
For $k=-1$ the property \eqref{iteration:step1} follows from the fact that $u \equiv 0$ in $[0,1-T_0]$, the definition of $\Omega_{q, -1} $ and the fact that $x \in A_0 [a_0^{1+ \epsilon}]$. We now show  \eqref{iteration:step1} for $k \geq 0$. Pick
% Since $(x, \omega) \in ((H_{q} \cap  A_0[a_0^{1+ \epsilon}]) \times \Omega) \cap (\T^2 \times \Omega_{q})$. Then t
 $\overline{x} \in \T^2$ such that~$\XX_{1-T_k}^q (\overline{x}) = \XX^{\tilde \kappa_q}_{1-T_k} (x, \omega)$ where $\XX_t^q$ is the flow \eqref{d:ODE}  of the smooth velocity field $u_q = u \mathbbm{1}_{[1-T_q, 1+T_q]^c}$. From the definition of $H_{q,k}$, the iterative assumption $\XX^q_{1-T_k}(\overline{x}) =\XX^{\tilde{\kappa}_q}_{1-T_k} (x, \omega) \in G_{k+1}[a_{k+1}^{1+ \epsilon}] \cap A_k$ and Remark~\ref{remark:locallyconstant:diffusion} we have that $\XX^q_{1-T_{k+1}}(\overline{x}) \in A_{k+1} $. Finally we observe that 
 \begin{align} \label{inutile:integrands}
 | \XX^q_{t} (\overline{x}) - \XX^{\tilde \kappa_q}_{t} (x, \omega)| 
 & \leq {\left | \int_{1-T_k}^t \left [  u(s, \XX^q_s(\overline{x}))  - u(s, \XX^{\tilde \kappa_q}_s(x, \omega))  \right ] ds  \right | }
  + \sqrt{2 \tilde \kappa_q} | \WW_{t}(\omega) - \WW_{1-T_k}(\omega)|  \leq a_{k+1}^{1+ \epsilon}
 \end{align}
 for any $1- T_k \leq t \leq 1-T_{k+1} $,
 where we used the definition of $\Omega_{q,k}$ and property \eqref{c:vectorfield_constant_locally} to conclude that the integrand in \eqref{inutile:integrands} vanishes.
 
The smallness estimate for $H_q^c$ is more delicate than other estimates in Section~\ref{section:conservativesolution} because the sets $H_{q,k}$ have no product structure in $\T^2 \times \Omega$. It relies on the measure preserving property \eqref{d:sLf:property}  of 
%The estimate of the measure of the set $H_{q}$ exploits the fact
the stochastic flow applied to the set $G_{k+1} [a_{k+1}^{1+ \epsilon}]$ for any $0 \leq k \leq q-1$, the equality $\mathbbm{1}_{G_{k+1}[a_{k+1}^{1 + \epsilon}]} (\XX^{\tilde \kappa_q}_{1- T_k} ) = \mathbbm{1}_{H_{q,k}}$ in $\T^2 \times \Omega$ and the estimate \eqref{stima:G_q}, we get
$$  
(\mathcal{L}^2 \otimes \mathbb{P}) (H_q^c) =
(\mathcal{L}^2 \otimes \mathbb{P}) \left(\bigcup_{k=0}^{q-1} H_{q,k}^c \right) \leq \sum_{k=0}^{q-1} (\mathcal{L}^2 \otimes \mathbb{P}) (H_{q,k}^c) 
 \leq     \sum_{k=1 }^{q} (20 a_k^{\epsilon \delta} +4 a_k^{\epsilon} ) \leq  48 a_0^{\epsilon \delta}. 
 $$
Introducing the notation
%We use the shorthand notation 
$$
\mathcal{O}_q =  H_q \cap (\T^2 \times   \Omega_{q} )\,,
\qquad
\mathcal{O}_{q, A} = \mathcal{O}_q \cap (  A_0 [a_0^{1+ \epsilon}] \times \Omega )\,,
\qquad
\mathcal{O}_{q, B} = \mathcal{O}_q \cap ( B_0 [a_0^{1+ \epsilon}] \times \Omega )
$$
%and 
%$$
%\tilde{\vartheta}_{\tilde \kappa_q} (t,x) = \vartheta_{\tilde \kappa_q} (t + (1- T_q), x) \quad \text{for $t \in [0, \overline{t}_q)$.}
%$$
and relying on~\eqref{stima:G_q} and on the estimate on the measure of $\Omega_q$ we observe that 
$$
\mathcal{L}^2 \otimes \mathbb{P} (\mathcal{O}_{q, A} ) = 1 - \mathcal{L}^2 \otimes \mathbb{P} (\mathcal{O}_{q, A}^c )   \geq 1 - 48 a_0^{\epsilon \delta} - a_0^{\epsilon \delta} - \frac{1}{2} - 10 a_0^{\epsilon \delta} = \frac{1}{2} - 59 a_0^{\epsilon \delta}
$$ 
and observing also that $\mathcal{O}_{q, B} \cup \mathcal{O}_{q, A} = \mathcal{O}_q \cap (G_0 [a_0^{1+ \epsilon}] \times \Omega)$ we conclude that
$$
\mathcal{L}^2 \otimes \mathbb{P} ((\mathcal{O}_{q, B} \cup \mathcal{O}_{q, A})^c ) \leq  \mathcal{L}^2 \otimes \mathbb{P} (\mathcal{O}_q^c) + \mathcal{L}^2(G_0 [a_0^{1+ \epsilon}]^c)  \leq 49 a_0^{\epsilon \delta} + 20 a_0^{\epsilon \delta}\,.
$$

\medskip

\noindent{\bf Step.~2: Anomalous dissipation.} {\sl 
We claim that  for any $q \in m \N$
 $$
 \| \vartheta_{\tilde \kappa_q} (t, \cdot )  \|_{L^2(\T^2)}^2 \leq  a_0^{\epsilon \delta /2}   \qquad \mbox{ for any } t  \geq 1 - T_q + \overline{t}_q\, .
 $$
 %for any $t \geq 1 - T_q + \overline{t}_q$,  for any $q$ such that $q \in m\N$.
 In particular, since $ \| \vartheta_{\initial} \|_{L^2}^2 \geq \sfrac{3}{4} $ and $a_0^{\sfrac{\epsilon \delta}{2}} \leq \sfrac{1}{4}$
 from the energy balance~\eqref{e:intro:balance}   we have that 
 $$2 \tilde{\kappa}_q  \int_0^1 \int_{\T^2} | \nabla \vartheta_{\tilde \kappa_q} (s,x) |^2 dx ds \geq 2 \tilde{\kappa}_q   \int_{1-T_q}^{1- T_q + \overline{t}_q} \int_{\T^2} | \nabla \vartheta_{\tilde \kappa_q} (s,x) |^2 dx ds \geq  \frac{1}{2} \qquad \mbox{for any } q \in m\N \,  .$$
 }

\medskip

Let $f_q(x) = \vartheta_0(\lambda_q x)$ be the even chessboard function of side $a_q$ as in Definition~\ref{d_initial datum} 
and $f_q^{\tilde \kappa_q}$ be the solution of the heat equation with diffusivity parameter $  \tilde \kappa_q$, namely $\partial_t f_q^{\tilde \kappa_q} - \tilde \kappa_q \Delta f_q^{\tilde \kappa_q} =0$, and starting at time $ t= 1-T_q$ with initial datum $f_q$. 
The scaling properties of the heat equation and the $\lambda_q$-periodicity of the  initial datum $f_q$ (with average zero) imply the enhanced diffusion effect (see the end of Section \ref{ss:eur:dissipative})
$$
\left  \| f_q^{\tilde \kappa_q} (t , \cdot )  \right  \|_{L^2}^2 \leq e^{- { \lambda_q^2 \tilde \kappa_q (t - (1- T_q))}} \left \|  f_q   \right \|_{L^2}^2 \leq e^{- { \lambda_q^2 \tilde \kappa_q (t - (1- T_q))}}   \qquad \mbox{for any }  t \in [1- T_q, 1- T_q + \overline{t}_q]
$$
and also 
\begin{equation}
\label{eqn:lago} \left  \| f_q^{\tilde \kappa_q} (t , \cdot )  \right  \|_{L^2}^2 \leq a_q^{\epsilon} \leq \frac{a_0^{\epsilon \delta /2}}{4} \qquad \mbox{for } \   t - (1- T_q) = \overline{t}_q= a_q^{\gamma - \gamma \delta}
\end{equation}
where the second-to-last inequality holds thanks to \eqref{d:AD_second}. Since $u(t , \cdot ) \equiv 0$ for  $t \in [1- T_q, 1- T_q + \overline{t}_q]$ the function $\vartheta_{\tilde \kappa_q}$ solves as well the heat equation in such time interval. In the rest of this step we deal with the error due to the approximation of  the actual initial datum ${\vartheta}_{\tilde \kappa_q} (0, \cdot )$ with the chessboard function~$f_q$. 

Using the maximum principle, which in particular implies that $\vartheta_{\tilde \kappa_q} \leq 1 = f_q$ on $A_q$, $\int_{A_q} f_q = - \int_{B_q} f_q$ and the equality $$0 = \int_{\T^2} \vartheta_{\tilde \kappa_q} (1- T_q, x ) dx =  \int_{A_q} \vartheta_{\tilde \kappa_q} (1- T_q, x )  dx+ \int_{B_q} \vartheta_{\tilde \kappa_q} (1- T_q, x ) dx + \int_{(A_q \cup B_q)^c}\vartheta_{\tilde \kappa_q} (1- T_q, x ) dx  $$ we deduce that 
$$\| {\vartheta}_{\tilde \kappa_q} (1- T_q, \cdot ) - f_q \|_{L^1} \leq 2 \int_{A_q}  \left [ f_q (x) - \vartheta_{\tilde \kappa_q} (1- T_q, x ) \right ] dx + 3 \mathcal{L}^2 ((A_q \cup B_q)^c). $$
\begin{comment}
Using Step~1 and the maximum principle we can split {\color{red} cambiare con commento maria}
$$
\| {\vartheta}_{\tilde \kappa_q} (1- T_q, \cdot ) - f_q \|_{L^1} = 
\int_{ A_q} \left[ f_q - {\vartheta}_{\tilde \kappa_q} (1- T_q, \cdot )  \right] \, dx + 
\int_{B_q} \left[ {\vartheta}_{\tilde \kappa_q} (1- T_q, \cdot )  - f_q \right] \, dx +
\int_{(A_q \cup B_q)^c} | f_q - \tilde{\vartheta}_{\tilde \kappa_q} (0, \cdot) | \, dx \,.
$$
Using that
\begin{align*}
\int_{A_q} f_q = - \int_{B_q} f_q,
\end{align*}
\begin{align*}
\int_{B_q}  \tilde{\vartheta}_{\tilde \kappa_q} (t,x) dx =  \int_{\T^2} \mathbbm{1}_{B_{q}}(x) \tilde{\vartheta}_{\tilde \kappa_q}(t,x) dx & =  \mathbb{E} \int_{\T^2} \mathbbm{1}_{B_{q}}(\XX_t^{\tilde \kappa_q}) d \vartheta_{\initial} 
\\
& \leq - \iint_{\mathcal{O}_{q, B}}  \mathbbm{1}_{B_{q}}(\XX_t^{\tilde \kappa_q}) dx d \mathbb{P}(\omega)  + \mathcal{L}^2 \otimes \mathbb{P}((X_q \cup Y_q)^c)
   \\
   & \leq - \mathcal{L}^2 \otimes \mathbb{P} (\mathcal{O}_{q, B}) +  \mathcal{L}^2 \otimes \mathbb{P}((\mathcal{O}_{q, A} \cup \mathcal{O}_{q, B})^c) \leq -\frac{1}{2} + 128 a_0^{\epsilon \delta},
\end{align*}
\end{comment}
Using Step~1 and \eqref{feynman_kac} we estimate the right-hand side as
\begin{align*}
\int_{A_q}  {\vartheta}_{\tilde \kappa_q} ( 1- T_q,x) dx & =  \int_{\T^2} \mathbbm{1}_{A_{q}}(x) {\vartheta}_{\tilde \kappa_q}( 1- T_q ,x) dx  =  \mathbb{E} \int_{\T^2} \mathbbm{1}_{A_{q}}(\XX_{1- T_q}^{\tilde \kappa_q} (x, \cdot ))  d \vartheta_{\initial} (x) 
\\
&  \geq \iint_{\mathcal{O}_{q,A}}  \mathbbm{1}_{A_{q}}(\XX_{1- T_q}^{\tilde \kappa_q} (x, \omega )) dx d \mathbb{P}(\omega)  - (\mathcal{L}^2 \otimes \mathbb{P})((\mathcal{O}_{q,A} \cup \mathcal{O}_{q,B})^c)
   \\
   & = (\mathcal{L}^2 \otimes \mathbb{P}) (\mathcal{O}_{q,A})  - (\mathcal{L}^2 \otimes \mathbb{P})((\mathcal{O}_{q,A} \cup \mathcal{O}_{q,B})^c) \geq \frac{1}{2} - 128 a_0^{\epsilon \delta}.
\end{align*}
Using also that $\mathcal{L}^2 ((A_q \cup B_q)^c) \leq 20 a_q^{\epsilon \delta} \leq 20 a_0^{\epsilon \delta}$ and that $\int_{A_q} f_q = \mathcal{L}^2 (A_q) \leq \sfrac{1}{2}$, we deduce the estimate 
%where  in the second and the third parts we used that $\mathcal{L}^2 \otimes \mathbb{P}((X_q \cup Y_q)^c) \leq 28 a_0^{\epsilon \delta}$ from the previous step computations.
 $$ \frac 1 2  \| \vartheta_{\tilde \kappa_q} (1- T_q, \cdot ) - f_q \|_{L^2}^2 \leq \| {\vartheta}_{\tilde \kappa_q} (1- T_q, \cdot ) - f_q \|_{L^1} \leq 316 a_0^{\epsilon \delta} \leq a_0^{\sfrac{\epsilon \delta}{2}}/ 8.$$ This implies that
%then from the energy estimate of solutions to \eqref{e:intro:advdiff} we have
$$ \|  \vartheta_{\tilde \kappa_q} (t, \cdot ) - f_q^{\tilde \kappa_q} (t, \cdot)  \|_{L^2}^2 \leq \| \vartheta_{\tilde \kappa_q} (1- T_q, \cdot ) - f_q \|_{L^2}^2 \leq  \frac{a_0^{\sfrac{\epsilon \delta}{2}}}{4}, \qquad \mbox{ for any } t \in [1- T_q, 1- T_q + \overline{t}_q].$$
%because ${u}(t, \cdot) \equiv 0$ in such time interval. 
Hence, using also \eqref{eqn:lago}, we have
%where $f_q^{\tilde \kappa_q}$ is the solution of the heat equation starting from initial datum $f_q$.
%
%%{\color{red} continua qui}
%
%
% %$\lambda_q^2 \tilde{\kappa}_q \overline{t}_q = \frac{a_q^{ 2 \epsilon + \frac{\gamma \delta}{1 + \delta} - { \gamma \delta} }}{4} \geq a_q^{ - \epsilon } $ (where the last holds thanks to the choice of parameters because $\gamma \delta \geq 4 \epsilon$).
%%$\frac{1}{2} a_q^{- \gamma /2} a_q^{\gamma/4} \geq a_q^{- \gamma/8} \geq a_0^{-\gamma} $.
%We finally have
$$
\| \vartheta_{\tilde \kappa_q} (1- T_q + \overline{t}_q, \cdot ) \|_{L^2(\T^2)}^2 
\leq 2 \| \vartheta_{\tilde \kappa_q} (1- T_q + \overline{t}_q, \cdot ) - f_q^{\tilde \kappa_q} (1- T_q + \overline{t}_q, \cdot ) \|_{L^2}^2 +  2 \| f_q^{\tilde \kappa_q} (1- T_q + \overline{t}_q, \cdot )   \|_{L^2}^2
\leq {a_0^{\sfrac{\epsilon \delta}{2}}}.
$$
By the energy balance~\eqref{e:intro:balance} the same estimate $ \| \vartheta_{\tilde \kappa_q} (t, \cdot ) \|_{L^2(\T^2)}^2 \leq {a_0^{\sfrac{\epsilon \delta}{2}}} $ holds for any $t \geq 1- T_q + \overline{t}_q$. \qed

\section{Proof of the regularity in~Theorem~\ref{t_main_OC} }\label{section:regularity}

In this final section we prove the regularity of the velocity field $u$ and the uniform-in-diffusivity regularity estimate \eqref{bound_main_OC}  for the solutions $\vartheta_{\kappa}$ of the advection-diffusion equation~\eqref{e:intro:advdiff}, therefore concluding the proof of Theorem~\ref{t_main_OC}.

We begin with an elementary, but crucial, observation. Consider an horizontal or vertical shear flow~$\overline{u} \in C^{\infty}([0,1] \times \T^2)$ and consider the unique stochastic flow solving~\eqref{d:sdee}. Then we claim that
\begin{equation}\label{remark_regularity_SDE}
\sup_{\omega \in \Omega} \| \XX^{\kappa}_{0,t }( \cdot, \omega) \|_{W^{1, \infty}} \leq 3 +  \int_0^t  \| \nabla \overline u (s,\cdot) \|_{ L^\infty} ds \quad \mbox{ for any } t \in [0,1]. 
\end{equation}
Indeed, let us consider without loss of generality the case $\overline u(t, x,y) = (\overline u(t, y), 0)$, with a slight abuse of notation. The backward stochastic flow $\XX^{\kappa}_{0,t} = (\XX^{\kappa,1}_{0,t}, \XX^{\kappa, 2}_{0,t})$ can be explicitly computed as
$$
\begin{cases}
\XX^{\kappa,1}_{0,t} (x,y, \omega) =  x + \displaystyle\int_t^0 \overline{u} (s, \XX^{\kappa,2}_{0,s} (x, y, \omega) ) ds - \sqrt{2 \kappa} \WW_t^{\ 1} (\omega) \,
\\ 
\XX^{\kappa,2}_{0,t} (x,y, \omega) =  y - \sqrt{2 \kappa} \WW_t^{\ 2} (\omega) \,.
\end{cases}
$$
Hence, the derivative $\partial_y \XX^{\kappa}_{0,t} (x,y, \omega)$ can be estimated as follows
\begin{align*}
\left | \XX^{\kappa,1 }_{0,t} (x, y + h, \omega) -  \XX^{\kappa,1}_{0,t} (x, y , \omega) \right | & \leq \left | \int_0^t \overline{u} (s, y +h - \sqrt{2 \kappa} \WW_s^{\ 2}(\omega) ) - \overline{u} (s, y  - \sqrt{2 \kappa} \WW_s^{\ 2}(\omega) ) ds    \right  |
\\
&  \leq |h|  \int_0^t  \| \nabla \overline u (s,\cdot) \|_{ L^\infty} ds, 
\end{align*}
while $\XX^{\kappa, 2}_{0,t }(x, y + h, \omega) - \XX^{\kappa ,2 }_{0,t} (x,y, \omega) \equiv h$. Noticing that
$\XX^{\kappa}_{0,t }(x+ h, y , \omega) - \XX^{\kappa}_{0,t} (x,y, \omega) \equiv (h,0)$ we conclude that~\eqref{remark_regularity_SDE} holds.

\begin{proof}[Proof of the regularity in~Theorem~\ref{t_main_OC}]
%
% In this Section we prove the regularity of the solutions $\vartheta_{\kappa_q}$ ad the velocity field $u$ constructed in Section \ref{section:vectorfield} with the choice of parameters in Section \ref{section:choice_par_AD}.
%
We first prove that we $u \in L^p ((0,1);  C^\alpha(\T^2))$. Indeed, by \eqref{s:vectorfield_I_q5_new}
\begin{align*}
\| u \|_{L^p C^\alpha}^p & = \sum_{q=1}^\infty \int_{{\mathcal{I}}_q} \|u (s,\cdot)\|_{C^\alpha(\T^2)}^{p}  ds \leq  \sum_{q=1}^\infty \int_{{\mathcal{I}}_q} \|u (s,\cdot)\|_{L^\infty(\T^2)}^{p (1- \alpha)}   \|u (s,\cdot)\|_{W^{1,\infty}(\T^2)}^{p\alpha}   ds 
\\
& \lesssim \sum_{q=0}^\infty \lambda_q^{- \gamma}  \lambda_q^{p(1- \alpha)(\gamma -1)  }  \lambda_q^{p \alpha (\gamma -1)  } \lambda_{q+1}^{p \alpha (1 + \epsilon \delta)} 
\end{align*}
and the sum is finite if and only if 
$$ \frac{\gamma}{p} + 1 - \gamma - \alpha (1 + \epsilon \delta) (1 + \delta) >0$$
which indeed holds thanks to the choice \eqref{d:gamma}, the condition \eqref{c:alpha_beta_eps_kappa} and the fact that ${p}^\circ \geq 2$.

We now turn to the proof of the uniform-in-diffusivity bound in $L^{p^\circ}((0,1); C^\beta(\T^2))$ for the solutions~$\vartheta_{\kappa}$ of \eqref{e:intro:advdiff}. It is sufficient to prove that the (backward) stochastic flow is such that $\| \XX^{\kappa}_{0, \cdot } ( \cdot, \omega ) \|_{L^{p^\circ} C^\beta}$ is uniformly bounded independently on $\omega$ and $\kappa$. If this is the case, from the Feynman-Kac formula \eqref{feynman_kac_real}  we have
\begin{align}
| \vartheta_{\kappa} (t,x ) - \vartheta_{\kappa} (t,y)| & \leq \int_{\Omega} | \vartheta_{\initial} (\XX^{\kappa}_{0,t} (x, \omega)) - \vartheta_{\initial} (\XX^{\kappa}_{0,t} (y, \omega)) | d\mathbb{P}(\omega)  \notag
\\
& \leq \| \nabla \vartheta_{\initial} \|_{L^\infty} \int_{\Omega} |  \XX^{\kappa}_{0,t} (x, \omega) - \XX^{\kappa}_{0,t} (y, \omega) | d\mathbb{P}(\omega)  \notag
\\
 & \leq \| \nabla \vartheta_{\initial} \|_{L^\infty} \sup_{\omega \in \Omega} \| \XX^\kappa_{0,t} (\cdot, \omega ) \|_{C^\beta} |x- y|^\beta \label{proof:1}
\end{align} 
from which we conclude that $\| \vartheta_\kappa (t, \cdot) \|_{C^\beta(\T^2)} \lesssim \sup_{\omega \in \Omega} \| \XX^\kappa_{0,t} (\cdot, \omega ) \|_{C^\beta(\T^2)}$ for every $t \in [0,1]$. 

Let us show the regularity of the backward stochastic flow. Using the semigroup property of the flow, for any $t \in (1-T_q, 1- T_{q+1})$, we have for every $x\in \T^2 , \omega \in \Omega$
\begin{align*}
\XX^{\kappa}_{0,t} = \XX^{\kappa}_{0, 1-T_0} \circ \XX^{\kappa}_{1- T_0, 1-T_1} \circ \cdots \circ \XX^{\kappa}_{1-T_{q-1}, 1-T_q} \circ \XX^{\kappa}_{1-T_q, t}\,,
\end{align*}
therefore for every $\omega \in \Omega$
\begin{align} \label{stima:sde_itero}
\| \XX^{\kappa}_{0,t} (\cdot, \omega) \|_{W^{1, \infty}(\T^2)} \leq \prod_{j =0}^{q-1} \| \XX^{\kappa}_{1-T_{j},1-T_{j+1}} (\cdot, \omega) \|_{W^{1, \infty}} \sup_{t\in [{1-T_{q},1-T_{q+1}}]} \| \XX^{\kappa}_{1-T_{q},t} (\cdot, \omega) \|_{W^{1, \infty}}.
\end{align} 
We now estimate each term in the product. Using again the semigroup property we have
\begin{align*}
\| \XX^{\kappa}_{1-T_{j},1-T_{j+1}} (\cdot, \omega) \|_{W^{1, \infty}} \leq \| \XX^{\kappa}_{1-T_{j} + \overline{t}_j +  t_j,1-T_{j} + \overline{t}_j +2   t_j } (\cdot, \omega) \|_{W^{1, \infty}} 
\| \XX^{\kappa}_{1-T_{j} +  \overline{t}_j +2   t_j ,1-T_{j} +  \overline{t}_j +3 t_j } (\cdot, \omega) \|_{W^{1, \infty}}
\end{align*}
and using~\eqref{remark_regularity_SDE} and the estimates %\eqref{remark:estimates}
 in Remark \ref{remark:vectorfield} we have
$$
\| \XX^{\kappa}_{1-T_{j} + \overline{t}_j +  t_j,1-T_{j} + \overline{t}_j +2   t_j } (\cdot, \omega) \|_{W^{1, \infty}}  \leq 3 + \lambda_j^{-1} \lambda_{j+1}^{1 +\epsilon \delta}
$$
and
$$
\| \XX^{\kappa}_{1-T_{j} +  \overline{t}_j +2   t_j ,1-T_{j} +  \overline{t}_j +3 t_j } (\cdot, \omega) \|_{W^{1, \infty}} \leq 3 + \lambda_{j+1}^{\epsilon \delta}\,,
$$
which imply
$$
\| \XX^{\kappa}_{1-T_{j},1-T_{j+1}} (\cdot, \omega) \|_{W^{1, \infty}} \leq 16 \lambda_j^{\delta + 2 \epsilon \delta (1+ \delta)} \,.
$$
Analogously, 
\begin{align} \label{stima:sde_itero}
\sup_{t\in [{1-T_{q},1-T_{q+1}}]} \| \XX^{\kappa}_{1-T_{q},t} (\cdot, \omega) \|_{W^{1, \infty}} \leq 16 \lambda_q^{\delta + 2 \epsilon \delta (1+ \delta)} \,.
\end{align} 
Plugging this estimate in  \eqref{stima:sde_itero} we get
\begin{align*}
\| \XX^{\kappa}_{0,t} (\cdot, \omega) \|_{W^{1, \infty}(\T^2)} & \leq 16^{q+1} \prod_{j=0}^q \lambda_j^{ \delta (1+ 2 \epsilon   (1+ \delta))} = 16^{q+1}  \lambda_0^{\delta (1+ 2 \epsilon   (1+ \delta)) \sum_{j=0}^q  (1 + \delta)^{ j}}   
\\
& %= 16^{q+1} \lambda_0^{\delta C_{\epsilon, \delta} \frac{(1+ \delta)^{q+1} -1 }{ \delta} } \leq 
= 16^{q+1} \lambda_0^{(1+ 2 \epsilon   (1+ \delta)) [(1+ \delta)^{q+1}-1]} \leq 16^{q+1} \lambda_{q+1}^{1 + 2 \epsilon (1+ \delta)} \leq \lambda_{q+1}^{1 + 3 \epsilon (1+ \delta)},
\end{align*}
for any $0 \leq  t \leq 1 - T_{q+1}$,
where in the last inequality we used \eqref{c:d_0} to estimate $16^{q+1} \leq \lambda_{q+1}^{ \epsilon (1+ \delta)}$.
Finally,  we have by interpolation the bound uniformly in $\omega$ and $\kappa$ 
$$ 
\| \XX^{\kappa}_{0,t} (\cdot, \omega) \|_{C^\beta} \leq   \| \XX^{\kappa}_{0,t} (\cdot, \omega) \|_{W^{1, \infty}}^{\beta}  \| \XX^{\kappa}_{0,t} (\cdot, \omega) \|_{L^\infty}^{1- \beta} \leq \lambda_{q+1}^{\beta + 3\beta  \epsilon (1+ \delta)}
$$
 for any $ t \in (1-T_{q}, 1-T_{q+1}] $.
Therefore, using the previous observation \eqref{proof:1} we conclude that
%$$ \| \vartheta_\kappa (t, \cdot) \|_{C^\beta} \leq \| \nabla \vartheta_{\initial} \|_{L^\infty}  \sup_{\omega }  \| \XX^{\kappa}_{0,t} (\cdot, \omega) \|_{C^\beta} \leq \| \nabla \vartheta_{\initial} \|_{L^\infty}  \lambda_{q+1}^{\beta + 3\beta  \epsilon (1+ \delta)}. $$
\begin{align*}
\| \vartheta_\kappa \|_{L^{p^\circ } C^\beta}^{p^\circ} &= \sum_{q=0}^\infty \int_{{\mathcal{I}}_q} \| \vartheta_\kappa (s, \cdot ) \|_{ C^\beta}^{p^\circ}  ds \leq 4 \| \nabla \vartheta_{\initial} \|_{L^\infty}^{p^\circ}  \sum_{q=0}^{\infty} \lambda_q^{- \gamma +  \gamma \delta} \lambda_{q+1}^{p^{\circ}( \beta + 3\beta  \epsilon (1+ \delta))} 
\\
& =4  \| \nabla \vartheta_{\initial} \|_{L^\infty}^{p^\circ}  \sum_{q=0}^{\infty} \lambda_q^{- \gamma +  \gamma \delta + p^{\circ}( \beta + 3\beta  \epsilon (1+ \delta))(1 + \delta)} 
\end{align*}
and the sum is finite and independent of $\kappa$ since $- \gamma (1 -  \delta) + p^{\circ}( \beta + 3\beta  \epsilon (1+ \delta))(1 + \delta) <0$  thanks to~\eqref{d:gamma}.
\end{proof}

\begin{comment}
 $\vartheta_{\tilde{\delta}_q}$ is such that
  $$\int_{\T^2} \mathbbm{1}_{A_{q-1}}(x) \vartheta_{\tilde{\delta}_q}(t,x) dx \geq \left (\frac{1}{2} - 20 a_0^\gamma \right ) ( 1- 2 e^{-2q})$$

 the representation formula of Theorem \ref{prop:representation}, indeed defining $\Omega_q = \left \{ \omega \in \Omega : \sup_{t \in [0, 1-T_{q-1}]}  \sqrt{2 \tilde \delta_q} | \WW_t | \leq a_{q-1}^{1+ \gamma} \right \} $ and $R_q = \bigcap_{k = 1}^{q-1}  G_{k}[  a_{q-1}^{1 + \gamma}] \cap A_0 $ we have that
  \begin{align*}
   \int_{\T^2} \mathbbm{1}_{A_{q-1}}(x) \vartheta(t,x) dx & =  \mathbb{E} \int_{\T^2} \mathbbm{1}_{A_{q-1}}(\XX_t^{\tilde \delta_q}) d \vartheta_{\initial} \geq \int_{\Omega_q} \int_{R_q}  \mathbbm{1}_{A_{q-1}}(\XX_t^{\tilde \delta_q}) dx d \mathbb{P}(\omega) 
   \\
   & = \mathcal{L}^2 (R_q) \mathbb{P}(\Omega_q) 
   \geq  \left (\frac{1}{2} - 20 a_0^\gamma  \right ) (1-2 e^{-2q}),
  \end{align*} 
  where we compute 
  \begin{align*}
  \mathcal{L}^2 \left (\bigcup_{k=1}^{q-1} G_k[ a_{q-1}^{1+ \gamma}]^c \right) & \leq \sum_{k=1}^{q-1} \mathcal{L}^2 \left ( G_k[ a_{q-1}^{1+ \gamma}]^c \right)  \leq 4 \sum_{k=1}^{q-1} ( a_{q-1}^{ \gamma} + a_{k}^\gamma) 
  \\
  & \leq  4 \sum_{k=1}^{q-1} (2 a_{k}^{ \gamma}) \leq 16 a_1^{\gamma} \leq 16 a_0^\gamma.
  \end{align*}
  and 
  $\mathcal{L}^2(A_0) \geq \frac{1}{2} - 4 a_0^\gamma$.
  \end{comment}

\bibliographystyle{alpha}
 \bibliography{biblio}
 
\end{document}